\newcommand{\FF}{{\mathcal F}}
\newcommand{\II}{{\mathcal I}}
\newcommand{\LL}{{\mathcal L}}
\newcommand{\NN}{{\mathcal N}}
\newcommand{\BD}{{\mathbb D}}
\newcommand{\BN}{{\mathbb N}}
\newcommand{\N}{{\mathbb N}}
\newcommand{\BR}{{\mathbb R}}
\newcommand{\Rp}{{\mathbb R_+}}
\newcommand{\fch}{{\mathbf{1}}}
\newtheorem{theorem}{\bf Theorem}[section]
\newtheorem{proposition}[theorem]{\bf Proposition}
\newtheorem{lemma}[theorem]{\bf Lemma}
\newtheorem{corollary}[theorem]{\bf Corollary}
\theoremstyle{definition}
\newtheorem{definition}[theorem]{Definition}
\newtheorem{example}[theorem]{\bf Example}
\newtheorem{remark}[theorem]{Remark}
\numberwithin{equation}{section}
\begin{document}

\title {Reflected Skorokhod equations and the Neumann boundary value problem for elliptic equations with L\'evy-type operators
}
\author {Andrzej Rozkosz and Leszek S\l omi\'nski
\\ {\small Faculty of Mathematics and Computer Science,
Nicolaus Copernicus University} \\
{\small  Chopina 12/18, 87--100 Toru\'n, Poland}}
\date{}
\maketitle
\begin{abstract}
We consider Neumann problem for linear elliptic equations involving integro-differential operators of L\'evy-type. We show that suitably defined viscosity solutions have probabilistic representations given in terms of the reflected stochastic Skorokhod equation associated with an It\^o process and an independent pure-jump L\'evy process.
As an application of the representation  we show that viscosity solutions arise as limits of some penalized equations and give some stability results for the viscosity solutions. Our proofs are based on new limit theorems for solutions of penalized stochastic equations with jumps and new estimates on the bounded variation parts of the solutions.
\end{abstract}
{\small \noindent{\bf Keywords:} Elliptic equation, Neumann problem, L\'evy-type operator, viscosity solution, sto\-chastic representation, reflected Skorokhod equation, penalization.
\smallskip\\
{\bf AMS MSC 2010:} 60H30, 35J25}

\section{Introduction}

Let $D$ be a bounded,  convex $C^2$-domain in $\BR^d$,
$f:\BR^d\rightarrow\BR$, $g:\BR^d\rightarrow\BR$ be measurable functions
and  $\lambda>0$. In the present
paper, we consider the Neumann problem which formally can be stated as follows:
find $u$ such that
\begin{equation}
\label{eq1.1}
-Lu+\lambda u=f\quad\mbox{in }D,\qquad
\frac{\partial u}{\partial\bar{\mathbf n}}=-g\quad\mbox{in }D^c,
\end{equation}
where $\bar{\mathbf n}(x)={\mathbf n}(\Pi(x))$,  ${\mathbf n}$
is the inward unit normal vector and $\Pi(x)$ is the projection of $x$ on $\bar D$. In (\ref{eq1.1}),  $L$ is an integro-differential operator of the form
\[
L=\LL+\II.
\]
We assume that its local part is of the form
\begin{equation}
\label{eq1.8}
\LL=\frac12\sum^d_{i,j=1}a_{ij}(x)
\frac{\partial^2}{\partial x_i\partial x_j}
+\sum^d_{i=1}b_i(x)\frac{\partial}{\partial x_i},\qquad a=\sigma\cdot\sigma ^T,
\end{equation}
for some  bounded Lipschitz-continuous $\sigma:\BR^d\rightarrow\BR^d\times\BR^d$, $b:\BR^d\rightarrow\BR^d$, and  the nonlocal part is given by
\[
\II[u](x)=\int_{\BR^d}(u(x+y)-u(x)-y\cdot\nabla u(x)\fch_{B(0,1)}(y))\,\nu(dy),\quad u\in C^2_0(\BR^d),
\]
for some L\'evy measure  $\nu$ on $\BR^d$.
In  \cite{PZ} it is proved that if $\nu=0$, i.e. the operator $L:=\LL$ is local, then for bounded continuous $f,g$ there exists a unique viscosity solution $u$ of (\ref{eq1.1}) (with boundary condition imposed on $\partial D$) and $u$ has the stochastic representation
\begin{equation}
\label{eq1.2}
u(x)=E\int^{\infty}_0e^{-\lambda t}(f(X^x_t)\,dt+g(X^x_t)\,d|K^x|_t),
\quad x\in\bar D,
\end{equation}
where $(X^x,K^x)$ is the unique solution of the reflected Skorokhod SDE in $\bar D$ associated with $\LL$ and $|K^x|_t$ is the total variation of $K^x$ on $[0,t]$. In fact, in \cite{PZ} semilinear equations are considered.

In case $\nu\neq0$  it is by no means clear what one should mean by a solution of (\ref{eq1.2}). This problem was addressed from the analytic point of view in
\cite{BGJ} (see also \cite{BCGJ} for the case of  half space). Quite different approach is adopted in \cite{DR-OV}. From the probabilistic perspective a natural idea is to define a solution of (\ref{eq1.1}) as the function defined by (\ref{eq1.1}) but with $(X^x,K^x)$ replaced by a solution of some reflected process associated with $L$. However, even in the case of fractional Laplacian, i.e., when $L=\II=-(-\Delta)^{\alpha/2}$ for some $\alpha\in(0,2)$, there are several
reasonable definitions of the reflected process.
The process $N$ associated with $-(-\Delta)^{\alpha/2}$ is the symmetric $\alpha$-stable process in $\BR^d$. For instance, as candidates for the title of  ``reflected $\alpha$-stable process'' one can mention the $\alpha/2$-subordinator of the reflecting Wiener process in $\bar D$ (see \cite{J3}), the process associated with  the actively reflected Dirichlet space corresponding to $-(-\Delta)^{\alpha/2}$ (see \cite{CF,BBC}) or
the process obtained by solving  the Skorokhod equation for $N$ in $D$ (see Section \ref{sec2}).  For reasons explained briefly below, in the present paper we provide some justification for the last choice.

In \cite{BGJ} a new definition of viscosity solution to Neumann problem for nonlocal equation including (\ref{eq1.1}) as a special case is given. Under natural assumptions it guarantees the existence and uniqueness of a viscosity solution. Furthermore, it has the property that the unique viscosity solution $u$ of (\ref{eq1.1}), in the sense of \cite{BGJ}, is the pointwise limit of solutions $u_n$ of some penalized problems in $\BR^d$. This convergence result provides additional justification for the definition of the Neumann-type boundary conditions adopted  in \cite{BGJ}.

In the present paper we show that the (unique) viscosity solution $u$ of (\ref{eq1.1}), in the sense defined in \cite{BGJ}, has the probabilistic representation
\begin{align}
\label{eq1.3}
u(x)&=E\int_{[0,\infty)}e^{-\lambda t}(f(X^{x}_t)\,dt +g(X^{x}_t)\,d\|K^{x}\|_t)\nonumber\\
&\quad+E\sum_{0\leq t<\infty}e^{-\lambda t}
\int_0^{|\Delta K^x_t|}\bar g(X^x_t-s{\mathbf n}(X^x_t))\,ds,\quad x\in\BR^d,
\end{align}
where $\|K^x\|_t=|K^x_0|+|K^x|_t$ is the  variational norm of $K^x$ on $[0,t]$  and
\begin{equation}
\label{eq1.10}
\bar g(x)=g(x)-g(\Pi(x)),\quad x\in\BR^d.
\end{equation}
Slightly different but equivalent representations are given in Section \ref{sec4}.
Clearly (\ref{eq1.3}) reduces (\ref{eq1.2}) in case $L=\LL$.  We think that (\ref{eq1.3})  provides useful information on the nature of the viscosity solution of  (\ref{eq1.1}). As an illustration of the  utility  of (\ref{eq1.3}) we show that if $u_{\alpha}$, $\alpha\in(1,2)$, is the viscosity solution of (\ref{eq1.1}) with $\II=-(-\Delta)^{\alpha/2}$,  then
\[
\lim_{\alpha\rightarrow2^{-}}\sup_{x\in\bar D}|u_{\alpha}(x)-\tilde u(x)|=0,
\]
where $\tilde u$ is the viscosity solution of the Neumann problem
\begin{equation}
\label{eq1.9}
-\tilde\LL u+\lambda u=f\quad\mbox{in }D,\qquad
\frac{\partial u}{\partial{\mathbf n}}=-g\quad\mbox{on }\partial D,
\end{equation}
with local operator $\tilde\LL$ defined by (\ref{eq1.8}) but with the diffusion matrix $a$ replaced by the matrix $\tilde a=a+2I_d$, where $I_d$ is the identity matrix. As another application we show some stability results with respect to the convergence of the coefficients $\sigma,b$.

It is worth noting  that our proof that   $u$ defined by (\ref{eq1.3}) is the viscosity solution of (\ref{eq1.1}) is probabilistic and independent of the reuslts of \cite{BGJ}. It uses some  ideas  from \cite{PZ}.
We also give a probabilistic proof of the convergence of $\{u_n\}$ to $u$.
The proof of the convergence of $\{u_n\}$ to $u$ given in \cite{BGJ} is analytic. However,
the penalization term used in \cite{BGJ} to define $u_n$ is an analytic counterpart to the penalization term used in known  approximation schemes for $X^x$  (see \cite{LS} for the case of diffusion process, i.e. when $L=\LL$,  and \cite{LS1,LS2} for the general case).
This simple observation is behind our method of proof.  We consider probabilistic solutions $u_n$ of problems
\begin{equation}
\label{eq1.4}
-L^nu_n+\lambda u_n=f +ng\cdot\mbox{dist}(\,\cdot,\bar D)\quad\mbox{in }\BR^d
\end{equation}
with
\[
L^n=L-n\sum^d_{i=1}(x_i-\Pi^i(x))\frac{\partial}{\partial x_i}.
\]
They are defined by
\begin{equation}
\label{eq1.5}
u_n(x)=E\int^{\infty}_0e^{-\lambda t}
\big(f(X^{x,n}_t)+ng(X^{x,n}_t)
\cdot\mbox{dist}(X^{x,n}_t,\bar D)\big)\,dt,\quad x\in\BR^d,
\end{equation}
where $X^{x,n}$ is the solution of the SDE in $\BR^d$ associated with $L^n$. By using rather standard arguments (see Section \ref{sec4}) one can see that $u_n$  is a viscosity solution of (\ref{eq1.4})  and $u$ is a viscosity solution of (\ref{eq1.1}). To show our main convergence result we assume that the L\'evy measure  $\nu$ satisfies the following integrability  condition: \begin{equation}
\label{eq1.6}
\int_{\{|y|\ge1\}}|y|^p\nu(dy)<\infty\quad \mbox{for some }p>1.
\end{equation}
We show that if (\ref{eq1.6}) is satisfied, $f,g$ are continuous, $|f(x)|\le K(1+|x|^p)$ for some $K\ge0$  and $g$ is bounded, then $u,u_n\in C(\BR^d)$ and for every compact $K\subset\BR^d$
\begin{equation}
\label{eq1.7}
\lim_{n\rightarrow\infty}\sup_{x\in K}|u_n(x)-u(x)|=0.
\end{equation}
 Condition (\ref{eq1.6}) may be omitted if $g=0$ and $f$ is bounded. It is worth adding that unlike \cite{BGJ} we do not assume that $g$ is Lipschitz continuous.

In case $L=\LL$ results of the form (\ref{eq1.7}) for parabolic semilinear equations were proved in \cite{BMZ}. The main problem in proving (\ref{eq1.7}) for nonlocal operators lies in the fact that in general $\{X^{x,n}\}$ does not converge in the Skorokhod $J_1$-topology to the first component $X^x$ of the solution of the (slightly generalized) Skorokhod equation associated with $L$.
Nevertheless, one can show the convergence of functionals of the form appearing on the right-hand side of (\ref{eq1.5}) to functionals on the right-hand side of (\ref{eq1.3}). In fact, we prove the convergence of such functionals in a much more general setting. We also give new estimates on the variation of the process $K^x$. We think that these results are of independent interest.

Finally, let us stress, that in \cite{BGJ} broader class of equations is treated. Furthermore, unlike \cite{BGJ}, in the present paper we only treat the case of bounded, convex, regular domain and normal reflection. We think that possible extension to more general operator and/or domains and oblique reflection deserves further study. Another problem is to extend our results to semilinear equations.

\section{Penalization and convergence of functionals}
\label{sec2}

In the paper, $D$ is a bounded convex $C^2$-domain
in $\BR^d$. For $x\in\BR^d$ we denote by $\Pi(x)=(\Pi^1(x),\dots,\Pi^d(x))$ the unique element $y\in\bar D$ such that $|y-x|=\mbox{dist}(x,\bar D)$. We denote by ${\mathbf n}(x)=({\mathbf n}_1(x),\dots,{\mathbf n}_d(x))$ the normal inward unit vector at
$x\in\partial D=\bar D\setminus D$. It is known that  the function
$\BR^d\setminus\bar D\ni x\mapsto\mbox{dist}(x,\bar D)$ is differentiable and
\[
\nabla(\mbox{dist}^2(x,\bar D))= 2(x-\Pi(x)),\quad x\in\BR^d\setminus\bar D.
\]
As in \cite{BGJ} we set
\[
\bar{\mathbf n}(x)=-\nabla\mbox{dist}(x,\bar D),\quad x\in\BR^d\setminus\bar D.
\]
Since
\begin{equation}
\label{eq2.1}
x-\Pi(x)=\mbox{dist}(x,\bar D)\cdot\nabla\mbox{dist}(x,\bar D)
=-\mbox{dist}(x,\bar D)\cdot\bar {\mathbf n}(x),
\end{equation}
we have
\begin{equation}
\label{eq2.2}
\bar{\mathbf n}(x)=\frac{\Pi(x)-x}{\mbox{dist}(x,\bar D)}=\frac{\Pi(x)-x}{|\Pi(x)-x|}\in\NN_{\Pi(x)},\quad\mbox{i.e.}\quad
\bar{\mathbf n}(x)={\mathbf n}(\Pi(x))
\end{equation}
(see \cite[Remark 1(ii)]{Sl}).

We set $\BR_+=[0,\infty)$ and denote by $\BD(\BR_+;\BD^d)$ the space of  $\BR^d$-valued functions on $\BR_+$ which are right continuous and have left-hand limits.
For a function  $k=(k^1,\dots, k^d)$ on $\BR_+$ of  locally finite variation and $0\leq a <b$ we denote by $|k|_{(a,b]}$ its total variation  on $[a,b]$, that is $|k|_{(a,b]}=\sup\sum_{i=1}^n|k_{t_i}-k_{t_{i-1}}|$, the supremum being taken over all partitions $a=t_0<t_1<\dots<t_n=b$. We also set $|k|_{(a,b)}=|k|_{(a,b]} -\Delta k_b$, $|k|_t=|k|_{(0,t]}$  and  $\|k\|_t=|k_0|+|k|_t$, $t\ge0$.

In the paper, the integral of an integrable function $h$ with respect to $k$ on the interval $(0,t]$  is denoted by $\int_0^t h_s\,dk_s$, and the integral on $[0,t]$ is denoted by $\int_{[0,t]}h_s\,dk_s$. In the second case, we adopt the convention that $k_{0-}=0$ (or, equivalently, that $\Delta k_0:=k_0-k_{0-}=k_0$). It follows that
\[
\int_{[0,t]}h_s\,dk_s=\int_0^t h_s\,dk_s+h_0k_0.
\]

\subsection{Deterministic case}
\label{sec2.1}

We start with the definition of the (slightly generalized) deterministic Skorokhod problem.

\begin{definition}
\label{def2.1}
Let $y\in\BD(\BR_+;\BR^d)$ be a function such that $y_0\in \BR^d$. A pair $(x,k)\in\BD(\BR_+;\BR^{2d})$ is called a solution of the Skorokhod problem associated with $y$ if
\begin{enumerate}
\item[(a)] $x_t=y_t+k_t$, $t\ge0$,

\item[(b)] $x$ is $\bar D$-valued, $k$ is a function of locally bounded variation such that $k_0=\Pi(y_0)-y_0$ and  $k_t=k_0+\int^t_0\mathbf{n}(x_s)\,d|k|_s$, $|k|_t=\int^t_0\fch_{\{x_s\in\partial D\}}\,d|k|_s$, $t\ge0$.
    \end{enumerate}
\end{definition}

\begin{remark}
\label{rem2.2} (i) The above definition is a direct extension of the ``usual" definition of a solution  given  by Skorokhod \cite{Sk1,Sk2} (see also  \cite{LS,Ta}). In the ``usual" definition it is assumed that $y_0\in\bar D$. In case $d=1$ and $y_0\in\BR$ our definition coincides with the definition considered in \cite{BKR}.
\smallskip\\
(ii) Note that $(x,k)$ is a solution of the Skorokhod problem in the sense of Definition \ref{def2.1} if and only if  $(x,k)=(\bar x, \Pi(y_0)-y_0+\bar k)$, where
$(\bar x,\bar k)$  is the solution of the usual Skorokhod problem
\[
\bar x_t=\bar y_t+\bar k_t, \quad t\ge0,
\]
associated with $\bar y_t=\Pi(y_0)-y_0+y_t$, $t\ge0$.
\smallskip\\
(iii) Suppose that condition (a) of Definition \ref{def2.1} is satisfied. Then condition (b) is equivalent to the following condition:
\begin{enumerate}
\item[(b')]
$x$ is $\bar D$-valued, $k$ is a function of locally bounded variation such that $k_{0-}=0$ and  $k_t=\int_{[0,t]}\mathbf{n}(x_s)\,d\|k\|_s$, $\|k\|_t=\int_{[0,t]}\fch_{\{x_s\in\partial D\}}\,d\|k\|_s$, $t\ge0$.
\end{enumerate}
\end{remark}
Let  $\{y^n\}\, \subset\BD(\BR_+;\BR^d)$. Consider the penalization scheme:
\begin{equation}
\label{eq2.3}
x^n_t=y^n_t-n\int^t_0(x^n_s-\Pi(x^n_s))\,ds=:y^n_t+k^n_t,\quad t\ge0.
\end{equation}

Note that the pair $(\Pi(x^n),k^n-y^n_0+\Pi(y^n_0)$ is a solution of the Skorokhod problem for $\Pi(x^n)-x^n+y^n+y^n_0-\Pi(y^n_0)$. This follows from the fact that
$\Pi(x^n)=\Pi(x^n)-x^n+y^n+k^n$.

Let $\omega'_x(\delta,q)$ denote the  modulus of continuity of $x\in\BD(\BR_+;\BR^d)$ on $[0,q]$ defined by $\omega'(\delta,q)=\inf\{\max_{i\le r} \omega_x([t_{i-1},t_i)):0=t_0<\dots< t_r=q,\inf_{i<r}(t_i-t_{i-1})\ge\delta\}$, where $\omega_x(I)=\sup_{s,t\in I}|x_s-x_t|$.  For  fixed $n\ge1$ let $T,\delta>0$ and $a\in D$  be chosen so that   $\omega'_{y^n}(\delta,T)<\mbox{\rm dist}(a,\partial D)/2$.
In \cite[Lemma 2.2]{LS2} it is proved that then

\begin{equation}
\label{eq2.4}
\sup_{t\le T}|x^n_t-a|\le 2\sqrt{7}([T/\delta]+1)\cdot\sup_{t\le T}|y^n_t-a|
\end{equation}
and
\begin{equation}
\label{eq2.5}
|k^n|_T\le 55([T/\delta]+1)^3(\mbox{\rm dist}(a,\partial D))^{-1}\sup_{t\le T}
|y^n_t-a|.
\end{equation}

\begin{theorem}
\label{th2.3}
Assume that  $y^n\to y$ in $J_1$. Then
\begin{enumerate}[\rm(i)]
\item If $y_0\in\bar D$ then $x_0=y_0$ and $x^n_0\rightarrow x_{0}$. Moreover, if $\Delta y_t=0$ then $x^n_t\rightarrow x_{t}$, $t>0$ (if $\Delta y_t=0$, then $x_{t-}=x_t$).

\item If $\{t_n\}$ is a sequence such that $t_n\to t$ and  $\Delta y^n_{t_n}\to \Delta y_t$, then $x^n_{t_n}\to x_{t-}+\Delta y_t$ and for any sequences $\{t_{n'}\}, \{t_{n''}\}$ such that $t_{n'}<t_n<t_{n''}$ and $t_{n'}\to t$,  $t_{n''}\to t$ we have  $x^n_{t_{n'}}\to x_{t-}$, $x^n_{t_{n''}}\to x_{t}$.
\item $(\Pi(x^n_t),y^n)\to (x,y)$ in $J_1$.
\end{enumerate}
\end{theorem}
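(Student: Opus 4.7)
The proof rests on the a priori bounds (\ref{eq2.4})--(\ref{eq2.5}) together with the observation that, being an absolutely continuous integral, $k^n$ has no jumps, so $\Delta x^n_t=\Delta y^n_t$ for every $t\ge0$. Fix $T>0$ and an interior point $a\in D$. Since $y^n\to y$ in $J_1$ the family $\{y^n\}$ is uniformly bounded on $[0,T]$ and one may choose $\delta>0$ so that $\omega'_{y^n}(\delta,T)<\mathrm{dist}(a,\partial D)/2$ for all $n$ large; (\ref{eq2.4})--(\ref{eq2.5}) then yield $\sup_n\sup_{t\le T}|x^n_t|<\infty$ and $C:=\sup_n|k^n|_T<\infty$. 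The defining identity $k^n_t=-n\int_0^t(x^n_s-\Pi(x^n_s))\,ds$ gives in turn
\[
\int_0^T|x^n_s-\Pi(x^n_s)|\,ds\le C/n,
\]
so $\Pi(x^n)$ and $x^n$ share the same $L^1([0,T])$ limits along subsequences.

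For assertion (i), $x^n_0=y^n_0\to y_0$ by the $J_1$ convergence; when $y_0\in\bar D$, Definition \ref{def2.1} forces $k_0=\Pi(y_0)-y_0=0$, whence $x_0=y_0$. The statement for $t>0$ with $\Delta y_t=0$ follows once (iii) is established, because $J_1$ convergence to a c\`adl\`ag limit entails pointwise convergence at its continuity points. To prove (iii), one extracts, via the Helly selection theorem applied to $k^n$ and the uniform boundedness of $|k^n|_T$, a subsequence along which $k^n$ converges pointwise to some c\`adl\`ag bounded-variation $\tilde k$, so that $\Pi(x^n)\to \tilde x:=y+\tilde k$ off a countable set. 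The limit $\tilde x$ takes values in $\bar D$ (closedness), and the measure-support conditions of Definition \ref{def2.1}(b) are verified by passing to the limit in the projection inequality
\[
\int_{(s,t]}(p(u)-\Pi(x^n_u))\cdot dk^n_u\ge 0,
\]
valid for every $\bar D$-valued c\`adl\`ag test path $p$, which follows from (\ref{eq2.2}) and convexity of $\bar D$: $-(x^n-\Pi(x^n))$ lies in the inward normal cone at $\Pi(x^n)$. Uniqueness of the Skorokhod problem for $y$, obtained by reduction to the usual case via Remark \ref{rem2.2}(ii), identifies $(\tilde x,\tilde k)$ with $(x,k)$, yielding convergence along the full sequence and upgrading the pointwise convergence to $J_1$.

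For (ii), continuity of $k^n$ gives $\Delta x^n_{t_n}=\Delta y^n_{t_n}\to\Delta y_t$. Part (iii) together with standard properties of the $J_1$ topology near jumps of the limit ensures that $x^n_{t_{n'}}\to x_{t-}$ for any $t_{n'}<t_n$ with $t_{n'}\to t$, and, using the Skorokhod relation $x_t=\Pi(x_{t-}+\Delta y_t)$ and continuity of $\Pi$, that $x^n_{t_{n''}}\to x_t$ for any $t_{n''}>t_n$ with $t_{n''}\to t$. Writing $x^n_{t_n}=x^n_{t_n-}+\Delta y^n_{t_n}$ then yields $x^n_{t_n}\to x_{t-}+\Delta y_t$.

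The main obstacle is the identification step in (iii): one must show simultaneously that $d\tilde k$ is supported on $\{\tilde x\in\partial D\}$ and points in the inward normal direction, even though the approximating $x^n$ repeatedly exits and returns to $\bar D$. Convexity of $\bar D$, continuity of $\Pi$, and the a priori $L^1$-vanishing of $|x^n-\Pi(x^n)|$ make the passage to the limit tractable, but some care is needed for times accumulating near the jumps of $y$, where the penalization reacts to a sudden excursion out of $\bar D$ over a vanishingly short interval.
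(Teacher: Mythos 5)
First, note that the paper does not prove Theorem \ref{th2.3} at all: it is quoted verbatim from \cite{LS2} (Theorem 2.3 and Corollary 2.4 there), where the argument is a constructive one based on discretizing $y^n$ into step functions that capture all jumps above a threshold, solving the penalized equation explicitly for step drivers, and controlling the error uniformly. Your compactness-plus-identification route (Helly selection, the normal-cone variational inequality, uniqueness of the Skorokhod problem) is therefore genuinely different in spirit, and the identification step itself is sound for convex $\bar D$. However, there are two real gaps. The first is the passage ``upgrading the pointwise convergence to $J_1$'': subsequential pointwise convergence of $k^n$ (hence of $\Pi(x^n)$ off a null set) together with the uniform bound $\sup_n|k^n|_T<\infty$ does \emph{not} imply $J_1$ convergence. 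For instance $f_n=\fch_{[1/2-1/n,1/2)}$ converges pointwise to $0$ with variations bounded by $2$, yet does not converge in $J_1$. In the present problem the danger is concrete: after a jump of $y^n$ that throws $x^n$ out of $\bar D$, the penalization produces a ramp of $\Pi(x^n)$ of macroscopic amplitude over an interval of length $O(1/n)$, and one must show that this ramp collapses into a single jump correctly matched (by the $J_1$ time changes) to the jump of $x$. This is precisely the technical heart of \cite{LS2} and cannot be dispensed with by citing uniqueness of the limit point.

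The second gap concerns the pointwise assertions about $x^n$ itself, which is what parts (i) and (ii) actually claim. Your estimates give $\int_0^T|x^n_s-\Pi(x^n_s)|\,ds\le C/n$, i.e.\ $L^1$-smallness of $\mathrm{dist}(x^n,\bar D)$, but (i) needs $x^n_t\to x_t$ at a \emph{fixed} continuity point $t$, and (ii) needs $x^n_{t_{n''}}\to x_t$ for $t_{n''}\downarrow t$ arbitrarily close to the jump; neither follows from an $L^1$ bound or from $J_1$ convergence of $\Pi(x^n)$. What is missing is the pointwise relaxation estimate, valid because $s\mapsto\mathrm{dist}(x^n_s,\bar D)$ decays at exponential rate $n$ between the increments of $y^n$:
\[
\mathrm{dist}(x^n_t,\bar D)\ \le\ e^{-n(t-s)}\,\mathrm{dist}(x^n_s,\bar D)\;+\;\sup_{s<u\le t}|y^n_u-y^n_s|,\qquad 0\le s\le t .
\]
With this inequality, (i) follows (take $s=t-\eta$ with $\eta$ small and use that $y$ is continuous at $t$), and the three limits in (ii) can be read off once the local behaviour of $x^n$ across the matched jump times is controlled; without it, the statements ``$x^n_{t_{n'}}\to x_{t-}$'' and ``$x^n_{t_{n''}}\to x_t$'' do not reduce to ``standard properties of the $J_1$ topology,'' because they concern $x^n$ and not $\Pi(x^n)$, and because $t_{n'},t_{n''}$ may approach $t$ faster than the ramp resolves. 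If you add the relaxation estimate and replace the Helly argument by (or supplement it with) the step-function approximation of \cite{LS2}, your outline can be completed; as written, it does not yet prove (ii) or the $J_1$ statement in (iii).
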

\begin{proof}
See \cite[Theorem 2.3]{LS2} and \cite[Corollary 2.4]{LS2}.
\end{proof}
\begin{corollary}
\label {cor2.4}
Assume that $\varphi\in C(\BR^d)$, $f\in C(\BR_+\times\BR^d)$   and $y^n\to y$ in $J_1$-topology. Then
\begin{enumerate}[\rm(i)]
\item $\varphi(x^n_T)\rightarrow\varphi (x_T)$ provided that $\Delta y_T=0$,

\item $\int^T_0f(s,x^n_s)\,ds\rightarrow \int^T_0f(s,x_s)\,ds$.
\end{enumerate}
\end{corollary}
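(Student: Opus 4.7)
The plan is to derive both parts of Corollary 2.4 as essentially direct consequences of Theorem 2.3, with part (ii) using additionally the a priori bound (\ref{eq2.4}) and dominated convergence.

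For part (i), I would fix $T>0$ with $\Delta y_T=0$ and aim to show $x^n_T\to x_T$; the conclusion then follows from continuity of $\varphi$. Since $T$ is a continuity point of $y$, the $J_1$ convergence $y^n\to y$ gives $y^n_T\to y_T$ together with $y^n_{T-}\to y_T$ (standard fact: on an interval around $T$ containing no jumps of $y$, the convergence $y^n\to y$ is uniform), hence $\Delta y^n_T\to 0=\Delta y_T$. Applying Theorem \ref{th2.3}(ii) to the constant sequence $t_n\equiv T$ then yields $x^n_T\to x_{T-}+\Delta y_T=x_{T-}$. It remains to check that $x_{T-}=x_T$. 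From Definition \ref{def2.1} one has $x=y+k$, and for $T>0$ a jump of $k$ at $T$ can occur only if $y_T+k_{T-}\notin\bar D$; since $\Delta y_T=0$ implies $y_T+k_{T-}=x_{T-}\in\bar D$, we conclude $\Delta k_T=0$, hence $\Delta x_T=0$, i.e.\ $x_{T-}=x_T$.

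For part (ii), I would invoke dominated convergence on $[0,T]$. The set of jump times of $y$ is at most countable, so for Lebesgue-a.e.\ $s\in[0,T]$ we have $\Delta y_s=0$, and then part (i) gives $x^n_s\to x_s$, which combined with continuity of $f$ yields $f(s,x^n_s)\to f(s,x_s)$ pointwise a.e. To obtain a uniform dominating bound, I would use the estimate (\ref{eq2.4}). The $J_1$ convergence $y^n\to y$ on $[0,T]$ implies both $\sup_n\sup_{t\le T}|y^n_t|<\infty$ and relative $J_1$-compactness of $\{y^n\}$, which in particular gives $\lim_{\delta\to 0}\sup_n\omega'_{y^n}(\delta,T)=0$. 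Choose $a\in D$ and $\delta>0$ so that $\sup_n\omega'_{y^n}(\delta,T)<\mathrm{dist}(a,\partial D)/2$ for all large $n$. Then (\ref{eq2.4}) gives a constant $C=C(T,\delta)$ with $\sup_{t\le T}|x^n_t-a|\le C\sup_{t\le T}|y^n_t-a|$, and the right-hand side is uniformly bounded in $n$. Thus $\{x^n_s:n\ge 1,\,s\in[0,T]\}$ is contained in a fixed compact set, on which $f$ is bounded; dominated convergence completes the proof.

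The only non-routine step is the uniform control of $\sup_{t\le T}|x^n_t|$ in part (ii): one must resist the temptation to apply (\ref{eq2.4}) with $n$-dependent constants and instead exploit $J_1$-relative compactness of $\{y^n\}$ to uniformize the choice of $\delta$. Everything else follows mechanically from Theorem \ref{th2.3} and the Skorokhod-problem structure recorded in Definition \ref{def2.1}.
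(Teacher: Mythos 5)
Your proposal is correct and follows essentially the same route as the paper: part (i) is reduced to the pointwise convergence statement of Theorem \ref{th2.3} (the paper cites part (i) of that theorem directly, where the implication $\Delta y_T=0\Rightarrow x_{T-}=x_T$ is already recorded, whereas you rederive it via part (ii) with $t_n\equiv T$), and part (ii) combines the uniform bound from (\ref{eq2.4}), countability of the jump set of $y$, and dominated convergence exactly as in the paper. Your extra care in uniformizing the choice of $\delta$ in (\ref{eq2.4}) via $J_1$-relative compactness is a detail the paper leaves implicit, not a different argument.
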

\begin{proof}
Assertion (i) follows from Theorem \ref{th2.3}(i). By (\ref{eq2.4}) there is $C>0$ such that
$\sup_{n\ge1}\sup_{t\le T}|x^n_t|\le C$. Therefore in the proof of (ii) we may and will assume that $f$ is bounded. Since the set $J=\{t\le T:|\Delta y_t|>0\}$ is countable and $f(s,x^n_s)\rightarrow f(s,x_s)$ for $s\in[0,T]\setminus J$ by Theorem \ref{th2.3}(i), applying the dominated convergence theorem we get (ii).
\end{proof}

In the following example we show that natural  functionals of integral type associated with $(x^n,k^n)$ in general do not converge to the respective functionals of $(x,k)$.

\begin{example}
\label{ex2.5}
Let $T,z>0$ and $a\in[0,T)$.  Consider the Skorokhod problem in $D=(0,1)$ associated with $y_t=-z{\bf 1}_{\{t\geq a\}}$, $t\geq0$. Note that in  case  $a=0$ we have $y_0=-z\notin\bar D$. It is easy to see that  the solution $(x,k)$ is of the form $x_t=0$, $k_t=z{\bf 1}_{\{t\geq a\}}$ for $t\geq0$. Moreover, the solution of (\ref{eq2.3}) with $y^n:=y$ has the form
\[
x^n_t=\begin{cases}
0, &t<a, \\
-ze^{-n(t-a)}, & t\ge a,
\end{cases}
\qquad
k^n_t=\begin{cases}
0, &t<a, \\
-n\int^t_ax^n_s\,ds=z( 1-e^{-n(t-a)}), & t\ge a.
\end{cases}
\]
Hence, for every $g\in C(\BR)$,
\begin{align*}
\int_0^Tg(x^n_s)\,d|k^n|_s&=\int_a^Tg(-ze^{-n(s-a)})nze^{-n(s-a)}\,ds\\
&=-\int^{ze^{-n(T-a)}}_{z}g(-r)dr\rightarrow\int^z_{0}g(-r)dr
\end{align*}
as $n\rightarrow\infty$. On the other hand,
\[\int_{[0,T]}g(x_s)\,d\|k\|_s=g(0)|k|_T+g(0)\cdot|\Pi(y_0)-y_0|=g(0)z,\]
because $|k|_T=0$ and $|\Pi(y_0)-y_0|=z$ in case $a=0$, and $|k|_T=z$, $|\Pi(y_0)-y_0|=0$ in case $a>0$.
\end{example}

Let $(x,k)$ be the solution of the Skorokhod problem associated with $y\in\BD(\BR_+;\BR^d)$
such that $y_0\in \BR^d$, and let
$g\in C(\BR_+\times\BR^d)$  and $T>0$.  In what follows we consider the following functional
\begin{equation*}
I_T(g,x,k):=\int_{[0,T]}g(s,x_s)\,d\|k\|_s+\sum_{0\leq s\leq T}\int_0^{|\Delta k_s|}\bar g(s,x_s-r{\mathbf n}(x_s))\,dr,
\end{equation*}
where $\bar g(s,x)=g(s,x)-g(s,\Pi(x))$, $x\in\BR^d$, $s\leq T$.
Note that for $g$  and  $(x,k)$  from Example \ref{ex2.5} we have
\[
I_T(g,x,k)=g(0)z+\int_0^{|\Delta k_a|}\bar g(x_a-r{\mathbf n}(x_a))\,dr
=g(0)z+\int_0^z\bar g(-r)\,dr=\int_{0}^z g(-r)\,dr.\]

The following Theorem \ref{th2.4}  and its stochastic version (Theorem \ref{th2.9} below) will play the  key role in proofs of our main results on the Neumann problem.

\begin{theorem}
\label{th2.4}
Assume that  $g\in C(\BR_+\times\BR^d)$   and
$y^n\to y$ in $J_1$. If $\Delta y_T=0$, then
 \[\int^T_0g(s,x^n_s)\,d|k^n|_s \rightarrow I_T(g,x,k).\]
\end{theorem}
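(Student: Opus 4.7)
The basic identity is that $k^n$ is absolutely continuous with $dk^n_s=-n(x^n_s-\Pi(x^n_s))\,ds$, so by (\ref{eq2.1})
\[
d|k^n|_s=n|x^n_s-\Pi(x^n_s)|\,ds,\qquad x^n_s-\Pi(x^n_s)=-|x^n_s-\Pi(x^n_s)|\,{\mathbf n}(\Pi(x^n_s)).
\]
Example \ref{ex2.5} pinpoints the difficulty: at each jump time $\tau$ of $k$ the process $x^n$ performs, on a time scale $1/n$, a fast exponential relaxation along the outward normal segment from $x_{\tau-}+\Delta y_\tau$ to $x_\tau=\Pi(x_{\tau-}+\Delta y_\tau)$, which produces the extra integral of $\bar g$ along that segment; away from these jump times the behaviour is continuous. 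The plan is to isolate finitely many ``large'' jumps, compute the relaxation contribution in each of them exactly, and control the remainder by Theorem \ref{th2.3} and the variation bound (\ref{eq2.5}).

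Fix $\varepsilon>0$, let $\tau_1<\dots<\tau_m$ enumerate the times in $(0,T)$ with $|\Delta k_{\tau_i}|>\varepsilon$, and adjoin $\tau_0=0$ when $y_0\notin\bar D$ (so that $k_0=\Pi(y_0)-y_0$ plays the role of an initial jump). Pick $\delta>0$ small enough that the windows $W_i=(\tau_i-\delta,\tau_i+\delta)\cap[0,T]$ are pairwise disjoint, avoid $T$, and contain no other $k$-jump of size exceeding $\varepsilon$. Split
\[
\int_0^T g(s,x^n_s)\,d|k^n|_s=\sum_i\int_{W_i}g(s,x^n_s)\,d|k^n|_s+\int_{R}g(s,x^n_s)\,d|k^n|_s,\qquad R=[0,T]\setminus\bigcup_i W_i,
\]
and treat each piece separately.

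For each window, $J_1$-convergence supplies $t^n_i\to\tau_i$ with $\Delta y^n_{t^n_i}\to\Delta y_{\tau_i}$, and Theorem \ref{th2.3}(ii) gives $x^n_{t^n_i}\to x_{\tau_i-}+\Delta y_{\tau_i}$, $|x^n_{t^n_i}-\Pi(x^n_{t^n_i})|\to|\Delta k_{\tau_i}|$ and ${\mathbf n}(\Pi(x^n_{t^n_i}))\to{\mathbf n}(x_{\tau_i})$ (for $\tau_0=0$ take $t^n_0=0$ and use $x^n_0=y^n_0\to y_0$). Choose $\alpha_n\to 0$ with $n\alpha_n\to\infty$. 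On the slab $[t^n_i,t^n_i+\alpha_n]$ the $J_1$-oscillation of $y^n$ tends to $0$; convexity of $\bar D$ makes $\Pi$ identically $x_{\tau_i}$ on the outward normal ray through $x_{\tau_i}$, so the ODE $\dot u=-n(u-\Pi(u))$ reduces to linear exponential decay and a Gronwall comparison of $x^n$ with
\[
u(s)=x_{\tau_i}-|\Delta k_{\tau_i}|e^{-ns}\,{\mathbf n}(x_{\tau_i})
\]
yields $\sup_{s\le\alpha_n}|x^n_{t^n_i+s}-u(s)|\to 0$. The substitution $r=|\Delta k_{\tau_i}|e^{-ns}$, $nr\,ds=-dr$, then produces
\[
\int_{t^n_i}^{t^n_i+\alpha_n}g(s,x^n_s)\,d|k^n|_s\longrightarrow\int_0^{|\Delta k_{\tau_i}|}g(\tau_i,x_{\tau_i}-r{\mathbf n}(x_{\tau_i}))\,dr=g(\tau_i,x_{\tau_i})|\Delta k_{\tau_i}|+\int_0^{|\Delta k_{\tau_i}|}\bar g(\tau_i,x_{\tau_i}-r{\mathbf n}(x_{\tau_i}))\,dr.
\]

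On the complement $R$ and on the leftovers $W_i\setminus[t^n_i,t^n_i+\alpha_n]$, Theorem \ref{th2.3}(i),(iii) yields $x^n\to x$ at every $y$-continuity point, (\ref{eq2.5}) bounds $|k^n|_T$ uniformly, and applying the same slab analysis to the small jumps still present gives an aggregate contribution of order $\omega_g(\varepsilon)|k|_T$, where $\omega_g$ is a local modulus of continuity of $g$; the continuous regime is handled by a Helly-type argument. Summing the large-jump contributions with this limit and letting $\varepsilon\to 0^+$ assembles exactly $I_T(g,x,k)$. The single hard step is the uniform ODE comparison inside each $W_i$: one must show quantitatively that the $n$-scale penalization dominates the perturbation by $y^n-y^n_{t^n_i}$ on the slab $[t^n_i,t^n_i+\alpha_n]$, so that the explicit exponential relaxation faithfully describes $x^n$ there. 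Once this local picture is secured, the change of variables and the $J_1$-convergence arguments on $R$ are routine.
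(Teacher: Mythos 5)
Your strategy is essentially the paper's, and the central computation --- the exact exponential relaxation of $x^n$ along the outward normal followed by the substitution $r=|d|e^{-n(s-\cdot)}$, which converts the $d|k^n|$--integral over a relaxation interval into $\int_0^{|\Delta k|}g(\tau,x_\tau-r{\mathbf n}(x_\tau))\,dr$ --- is literally the paper's evaluation of the terms $I^i_{n,k}$ in Step~2 of its proof. The differences are organizational. The paper splits $g=g\circ\Pi+\bar g$ and treats the two pieces by separate mechanisms: the $g\circ\Pi$ part via a Riemann--sum argument for $\int_0^T h(s,\Pi(x^n_s))\,dk^n_s\rightarrow\int_{[0,T]}h(s,x_s)\,dk_s$ (Step~1, using Theorem~\ref{th2.3}(ii) to identify the limits $k^n_{t^i_{n,k}}\to k_{t^i_k-}$), and the $\bar g$ part via the relaxation computation (Step~2); you compute both contributions at once inside each window around a large jump of $k$. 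More importantly, the paper never compares $x^n$ directly with an explicit exponential: it introduces piecewise-constant drivers $y^{n,(i)}$, for which the penalized solution $x^{n,(i)}$ \emph{is} the exponential relaxation on each grid interval, and controls $\sup_{t\le T}|x^{n,(i)}_t-x^n_t|$ by the stability estimates \eqref{eq2.7} imported from \cite{LS2}. That is exactly the step you single out as ``the single hard step'' and leave as an assertion, so as written your argument has a gap precisely there.

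The gap is closable, but not by a naive Gronwall bound, which at drift scale $n$ on a slab of length $\alpha_n$ with $n\alpha_n\to\infty$ produces a useless factor $e^{Cn\alpha_n}$. What saves the comparison is that, by \eqref{eq2.1} and convexity of $D$, $z\mapsto z-\Pi(z)=\frac12\nabla\,\mbox{\rm dist}^2(z,\bar D)$ is the gradient of a convex function, hence monotone, so the penalization drift is dissipative and the deviation of $x^n$ from the unperturbed relaxation is bounded by the initial discrepancy plus (twice) the oscillation of $y^n$ on the slab, uniformly in $n$; you must say this (or invoke the \cite{LS2} stability results as the paper does). Two smaller points to tighten: the windows $W_i$ also carry the continuous part of $d\|k\|$ and the small jumps inside them, so you need $\delta\downarrow0$ as well as $\varepsilon\downarrow0$ to recover exactly $I_T(g,x,k)$; and a ``Helly-type argument'' on $R$ is not sufficient on its own, since $d|k^n|$ need not converge weakly to $d\|k\|$ there (mass concentrates at the small jumps) --- the correct control is $|\bar g(s,z)|\le\omega_g(\mbox{\rm dist}(z,\bar D))$ combined with $d|k^n|_s=n\,\mbox{\rm dist}(x^n_s,\bar D)\,ds$ and the uniform bound $\sup_n|k^n|_T<\infty$ from \eqref{eq2.5}, which is the same small-jump estimate the paper runs with Lipschitz approximants $g_j$.
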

\begin{proof}
{\em Step 1}. We will  show that
\begin{equation}
\label{eq2.11}
\int^T_0g(s,\Pi(x^n_s))\,d|k^n|_s \rightarrow\int_{[0,T]}g(s,x_s)\,d\|k\|_s.
\end{equation}
Note that under our assumption on the domain $D$ we have $d\|k\|_t=\varphi(x_t)\,dk_t$ and 
\begin{equation}
\label{eq2.24}
d|k^n|_t=\varphi(\Pi(x^n_t))\,dk^n_t
\end{equation} 
for some continuous $\varphi:\BR^d\rightarrow\BR^d$.
Therefore to get (\ref{eq2.11}) it suffices to show that if $\Delta y_T=0$, then
for any continuous $h:\BR_+\times\BR^d\rightarrow\BR^d$ we have
\begin{equation}
\label{eq3.7}
\int^T_0h(s,\Pi (x^n_s))\,dk^n_s
\rightarrow\int_{[0,T]}h(s,x_s)\,dk_s.
\end{equation}
Let $\{\delta^i\}$ be a sequence of strictly positive constants such that
$\delta^i \downarrow 0$ and $|\Delta y_t| \neq \delta ^i , t \in\Rp $. Similarly
to the proof of  \cite[Theorem 2.3]{LS2} we set $t^i_{n,0}= 0$,
$ t^i_{n,k+1}=\min ( t^i_{n,k} +
\delta _k^i, \inf\{t> t^i_{n,k}:|\Delta y^n_t|> \delta ^i \})$,
 $t^i_{0}=0$, $t^i_{k+1}=\min(t^i_{k} + \delta _k^i, \inf \{t>
t^i_{k}:|\Delta y_t|> \delta ^i \})$,  where $\{\{ \delta ^i_k\}\}$ is an  array
of constants satisfying $\delta^i/2 \leq
\delta _k^i \leq \delta ^i$ and $|\Delta y_{t^i_k + \delta
_k^i}|=0$. Since $\Delta y_T=0$,  without loss of generality  we  may assume that $T\notin\{\{t^i_k\}\}$.  Next, for each  $i\in\BN$ set
$ y^{n,(i)}_t = y^n_{t^i_{n,k}}$,  $t \in
[t^i_{n,k},t^i_{n,k+1})$,  $ y^{(i)}_t=y_{t^i_{k}}$, $t \in
[t^i_{k},t^i_{k+1})$, $n, k \in \N\cup\{0\}$.
Observe that for any $i\in\BN, k\in\BN\cup\{0\}$,
\begin{equation}
\label{eq2.13}
t^i_{n,k}\rightarrow t^i_k\quad\mbox{and}\quad y^n_{t^i_{n,k}}
	\rightarrow y_{t^i_k}
\end{equation}
as $n\rightarrow\infty$, and if $i\rightarrow\infty$, then
\begin{equation}
\label{eq2.14}
\limsup_{i\rightarrow\infty}\sup_{t\le T}|y^{(i)}_t-y_t|=0
\quad\mbox{and}\quad
\lim_{i\rightarrow\infty}\limsup_{n\rightarrow\infty}|y^{n,(i)}_t-y^n_t|=0.
\end{equation}
For $n,i\in\BN$ we denote by  $x^{n,(i)}$  the solution of the
equation with the penalization term of the form
\[
x^{n,(i)}_t=y^{n,(i)}_t-n\int_0^t(x^{n,(i)}_s-\Pi(x^{n,(i)}_s))\,ds=y^{n,(i)}_t
+k^{n,(i)}_t ,\quad t \in \Rp,
\]
and by $(x^{(i)},k^{(i)})$ we denote the solution of the Skorokhod problem associated with $y^{(i)}$.
By (\ref{eq2.14}) and arguments from the proof of \cite[Theorem 2.3]{LS2},
\begin{equation}
\label{eq2.7}
\lim_{i\to\infty}\sup_{t\leq T}|x^{(i)}_t-x_t|=0\quad\mbox{\rm and}\quad \lim_{i\to\infty}\limsup_{n\to\infty}\sup_{t\leq T}|x^{n,(i)}_t-x^n_t|=0.
\end{equation}
To shorten notation, we  set
\[
z^n_s=h(s,\Pi(x^n_s)),\qquad z^{n,(i)}_s=h(t(n,i)_s,\Pi(x^{n,(i)}_s))
\]
and
\[
z_s=h(s,x_s),\qquad z^{(i)}_s=h(t(i)_s,x^{(i)}),
\]
where $ t(n,i)_s = t^i_{n,k}$,  $s \in
[t^i_{n,k},t^i_{n,k+1})$,  $ t(i)_s=t^i_{k}$, $s \in
[t^i_{k},t^i_{k+1})$, $k \in \N\cup\{0\}$, $n,i\in\N$.
To show (\ref{eq3.7}) we first observe that
\begin{equation}
\label{eq2.8}
\Big|\int^T_0z^{n,(i)}_s\,dk^n_s-\int^T_0z^{n}_s\,dk^n_s\Big|\le \sup_{s\le T} |z^{n,(i)}_s-z^{n}_s|\cdot|k^n|_T.
\end{equation}
We next show that for every $i$, if $\Delta y_T=0$, then
\begin{equation}
\label{eq2.9}
\int^T_0z^{n,(i)}_s\,dk^n_s\rightarrow\int_{[0,T]}z^{(i)}_s\,dk_s
\end{equation}
as $n\rightarrow\infty$. By (\ref{eq2.13}) and Theorem \ref{th2.3}(ii), for every $k\in\BN$, $k^n_{t_{n,k}^i}=x^n_{t_{n,k}^i}-y^n_{t_{n,k}^i}\rightarrow x_{{t_{k}^i}-}+\Delta y_{t_{k}^i}-y_{t_{k}^i}=x_{{t_{k}^i}-}-y_{{t_{k}^i}-}=k_{{t_{k}^i}-}$. Moreover, $k^n_T\rightarrow k_T$. Set $k_0(n,i)=\max\{k:t^{i}_{n,k}< T\}<\infty$. Since $k^n_0=0$ and $k^n$ is continuous,
\begin{align*}
\int^{T}_0z^{n,(i)}_s\,dk^n_s
&=\sum_{k=0}^{k_0(n,i)-1}\int_{[t^{i}_{n,k},t^{i}_{n,k+1})}z^{n,(i)}_s\,dk^n_s
+\int_{[t^{i}_{n,k_0(n,i)},T]}z^{n,(i)}_s\,dk^n_s\\
&=\sum^{k_0(n,i)-1}_{k=0}z^{n,(i)}_{t^{i}_{n,k}}
(k^n_{t^{i}_{n,k+1}}-k^n_{t^{i}_{n,k}})+z^{n,(i)}_{t^{i}_{n,k_0(n,i)}}
(k^n_{T}-k^n_{t^{i}_{n,k_0(n,i)}}).
\end{align*}
For a sufficiently large $n$, $k_0(n,i)=k_0(i)=\max\{k:t^{i}_k< T\}<\infty$, so
\begin{align*}
\int^T_0z^{n,(i)}_s\,dk^n_s&\rightarrow\sum^{k_0(i)-1}_{k=0}z^{(i)}_{t^{i}_{k}}
(k_{t^{i}_{k+1}-}-k_{t^{i}_{k}-})+z^{(i)}_{t^{i}_{k_0(i)}}
(k_{T}-k_{t^{i}_{k_0(i)}-})\\
&=\sum_{k=0}^{k_0(i)-1}\int_{[t^{i}_{k},t^{i}_{k+1})}z^{(i)}_s\,dk_s
+\int_{[t^{i}_{k_0(i)},T]}z^{i}_s\,dk_s=\int_{[0,T]}z^{(i)}_s\,dk_s
\end{align*}
(with the convention that $k_{0-}=0$). This shows (\ref{eq2.9}). Furthermore, we have
\begin{equation}
\label{eq2.10}
\Big|\int_{[0,T]}z^{(i)}_s\,dk_s-\int_{[0,T]}z_s\,dk_s\Big|\le\sup_{s\leq T}|z^{(i)}_s-z_s|\cdot\|k\|_T.
\end{equation}
Since from (\ref{eq2.5}) we know that  $\sup_{n\ge1}|k^n|_T<\infty$, we deduce from  (\ref{eq2.7})--(\ref{eq2.10}) that  (\ref{eq3.7}) is satisfied and the proof of (\ref{eq2.11})  is complete.\\
{\em Step 2.} We will show that
\begin{equation}
\label{eq2.12}
\int^T_0\bar g(s,x^n_s)\,d|k^n|_s \rightarrow\sum_{0\leq s\leq T}\int_0^{|\Delta k_s|}\bar g(s,x_s-r{\mathbf n}(x_s))\,dr
\end{equation}
as $n\rightarrow\infty$. Let $y^{n,(i)}$, $x^{n,(i)}$, $k^{n,(i)}$  and $y^{(i)}$, $x^{(i)}$, $k^{(i)}$ etc. be defined as in {\em Step 1}. By elementary calculations one  can check that $x^{n,(i)}$  is of the form
\[
 x^{n,(i)}_t=\begin{cases}
y^n_0, &t=0, \smallskip\\
\Pi (x^{n,(i)}_{t^i_{n,k}})+( x^{n,(i)}_{t^i_{n,k}}-\Pi (
x^{n,(i)}_{t^i_{n,k}}))e^{-n(t-t^i_{n,k})},  & t \in
 (t^i_{n,k},t^i_{n,k+1}),\,k \in \N \cup \{0\}, \smallskip\\
x^{n,(i)}_{(t^i_{n,k+1})-}+y^{n,(i)}_{t^i_{n,k+1}}-y^{n,(i)}_{t^i_{n,k}},
 & t=t_{n,k+1},\,k \in \N\cup \{0\}.
\end{cases}
\]
Fix $i\in \BN$.  Since for a sufficiently large $n$, $k_0(n,i)=k_0(i)$, we may and will assume that for such $n$, $t^{i}_{n,k_0(n,i)+1}=T$. Set $p_{n,k}^i=\Pi (
x^{n,(i)}_{t^i_{n,k}})$, $d_{n,k}^i=x^{n,(i)}_{t^i_{n,k}}-\Pi (
x^{n,(i)}_{t^i_{n,k}})$ and $\Delta t_{n,k+1}^i=t^{i}_{n,k+1}-t^{i}_{n,k}$,  and observe that for all sufficiently large $n$,
\begin{align*}&\int_0^T\bar g(t(n,i)_s,x^{n,(i)}_s)\,dk^{n,(i)}_s\\
&\quad=\sum_{k=0}^{k_0(n,i)}\int_{t^{i}_{n,k}}^{t^{i}_{n,k+1}}\bar g(t^i_{n,k},p_{n,k}^i+d_{n,k}^ie^{-n(s-t^i_{n,k})})n|d_{n,k}^i|e^{-n(s-t^i_{n,k})}\,ds
\\
&\quad=\sum_{k=0}^{k_0(i)}\int_{t^{i}_{n,k}}^{t^{i}_{n,k}+\Delta t_{n,k+1}^i}\bar g(t^i_{n,k},p_{n,k}^i+d_{n,k}^ie^{-n(s-t^i_{n,k})})n|d_{n,k}^i|e^{-n(s-t^i_{n,k})}\,ds
=\sum_{k=0}^{k_0(i)} I_{n,k}^i.
\end{align*}
Changing the variables $s\leadsto r:=|d_{n,k}^i|e^{-n(s-t^i_{n,k})}$,
for $k=0,\dots,k_0(i)$ we obtain
\begin{align*}
I_{n,k}^i&=\int_{t^{i}_{n,k}}^{t^{i}_{n,k}
+\Delta t_{n,k+1}^i}\bar g(t^i_{n,k},p_{n,k}^i-{\mathbf n}( p_{n,k}^i) |d_{n,k}^i|e^{-n(s-t^i_{n,k})})n|d_{n,k}^i|e^{-n(s-t^i_{n,k})}\,ds\\
&=-\int_{|d_{n,k}^i|}^{|d_{n,k}^i|e^{-n\Delta t_{n,k+1}^i}}
\bar g(t^{i}_{n,k},p_{n,k}^i-r{\mathbf n}( p_{n,k}^i))\,dr.
\end{align*}
By Theorem \ref{th2.3}, $p_{n,k}^i\rightarrow x^{(i)}_{t_k}$  and
$d_{n,k}^i\rightarrow x^{(i)}_{t_{k-1}}+\Delta y^{(i)}_{t_k}-\Pi(x^{(i)}_{t_{k-1}}+\Delta y^{(i)}_{t_k})=-\Delta k^{(i)}_{t_k}$.
From this and the fact that $\Delta t_{n,k+1}^i\rightarrow t^i_{k+1}-t_k^i>0$ we deduce that
\[
I_{n,k}^i\rightarrow\int_0^{|\Delta k^{(i)}_{t_k}|}
\bar g (t^i_k,x^{(i)}_{t_k}-r{\mathbf n}( x^{(i)}_{t^i_{k}}))\,dr.
\]
Since the set $\{t_k\}$ exhausts all the times of jumps of $y^{(i)}$ and hence of $k^{(i)}$,
we have proved that for every $i\in\BN$,
 \begin{equation}
\label{eq2.16}
\int^T_0\bar g(t(i)_s,x^{n,(i)}_s)\,d|k^{n,(i)}|_s \rightarrow\sum_{0\leq s\leq T}\int_0^{|\Delta k^{(i)}_s|}\bar g(t(i)_r,x^{(i)}_s-r{\mathbf n}(x^{(i)}_s))\,dr.
 \end{equation}
We are going to show that from (\ref{eq2.16}) one can deduce (\ref{eq2.12}). By (\ref{eq2.5}) and (\ref{eq2.7}) there exists a compact set $K\subset\BR^d$ such that $x^{(i)}_t, x^{n,(i)}_t, x^n_t\in K$ for $t\le T$. Furthermore, for $g\in C([0,T]\times\BR^d)$ there exists a sequence of functions  $\{g_j\}\subset C^2([0,T]\times\BR^d)$ such that for every $s\in[0,T]$,
\begin{equation}
\label{eq2.17}
\lim_{j\rightarrow\infty}\sup_{x\in K}|g(s,x)-g_j(s,x)|=0,
\end{equation}
and moreover, for every $j\in\N$  there is $L_j>0$  such that
\begin{equation}
\label{eq2.18}|
g_j(s,x)-g_j(s,y)|\leq L_j|x-y|,\quad x,y\in K.
\end{equation}
Fix $j\in\N$ and write $\bar g_j(s,x)=g_j(s,x)-g_j(s,\Pi(x))$ for $s\in[0,T]$,  $x\in\BR^d$. Clearly, (\ref{eq2.17}) and (\ref{eq2.18}) with the constant $2L_j$ hold true for $\bar g_j$.  By  (\ref{eq2.4}), without loss of  generality we may and will assume that the values of $x^n_s$, $x^{n,(i)}_s$, $x_s$  and $x^{(i)}_s$ belong to some compact set $K$.
We know that $\sup_{0\leq s\leq T}|k^{(i)}_s-k_s|\rightarrow0$. Let $\varepsilon>0$  be such that $|\Delta k_s|\neq \varepsilon$, $s\in[0,T]$. Note that for every $s$ such that $|\Delta k_s|>\varepsilon$ we have  $\Delta k^{(i)}_{s}\to\Delta k_{s}$. Therefore  it follows from (\ref{eq2.7}) that
\begin{align*}
\sum_{0\leq s\leq T,|\Delta k^{(i)}_s|>\varepsilon}&\int_0^{|\Delta k^{(i)}_s|}\bar g_j(t(i)_s,x^{(i)}_s-r{\mathbf n}(x^{(i)}_s))\,dr\\
&\qquad\quad\rightarrow
\sum_{0\leq s\leq T,|\Delta k_s|>\varepsilon}\int_0^{|\Delta k_s|}
\bar g_j(s,x_s-r{\mathbf n}(x_s))\,dr
\end{align*}
as $i\rightarrow\infty$. On the other hand, by (\ref{eq2.18}),
\begin{align*} \sum_{0\leq s\leq T,|\Delta k^{(i)}_s|\leq\varepsilon}&\int_0^{|\Delta k^{(i)}_s|}|\bar g_j(t(i)_s,x^{(i)}_s-r{\mathbf n}(x^{(i)}_s))|\,dr\\
&\leq \sum_{0\leq s\leq T,|\Delta k^{(i)}_s|\leq\varepsilon}2L_j\frac{|\Delta k^{(i)}_s|^2}{2}\leq \varepsilon L_j\sum_{0\leq s\leq T}|\Delta k^{(i)}_s|
\end{align*}
and
\[
\sum_{0\leq s\leq T,|\Delta k_s|\leq\varepsilon}\int_0^{|\Delta k_s|}
|\bar g_j(s,x_s-r{\mathbf n}(x_s)|\,dr\leq\varepsilon L_j\sum_{0\leq s\leq T}|\Delta k_s|.
\]
Since $\sup_{i\ge1}\|k^{(i)}\|_T$, $\|k\|_T<\infty$, letting $\varepsilon\downarrow0$  yields
\begin{equation}
\label{eq2.19}
\sum_{0\leq s\leq T}\int_0^{|\Delta k^{(i)}_s|}
\bar g_j(t(i)_s,x^{(i)}_s-r{\mathbf n}(x^{(i)}_s))\,dr\rightarrow
\sum_{0\leq s\leq T}\int_0^{|\Delta k_s|}\bar g_j(s,x_s-r{\mathbf n}(x_s))\,dr.
\end{equation}
We next observe that
\begin{align*}I^{n,(i)}&:=\int^T_0\bar g_j(s,x^{n,(i)}_s)\,d|k^{n,(i)}|_s -\int^T_0\bar g_j(s,x^n_s)\,d|k^n|_s \\
&=\int^T_0(\bar g_j(s,x^{n,(i)}_s)-\bar g_j(s,x^n_s))\,d|k^{n,(i)}|_s\\
&\quad+\int^T_0\bar g_j(s,x^{n}_s)n(|x^{n,(i)}_s-\Pi(x^{n,(i)}_s)|-|x^{n}_s-\Pi(x^{n}_s)|)\,ds
=:I_1^{n,(i)}+I_2^{n,(i)}.
\end{align*}
By the Lipschitz continuity of $\bar g_j$,
\[
|I_1^{n,(i)}|\leq 2L_j\sup_{t\leq T}|x^{n,(i)}_t-x^n_t|\cdot|k^{n,i}|_T,
\]
and by the Lipschitz continuity of $g_j$,
\[|I_2^{n,(i)}|\leq 2\sup_{t\leq T}|x^{n,(i)}_t-x^n_t|\cdot L_j |k^{n}|_T.\]
Since  $\sup_{n,i}|k^{n,(i)}|_T<\infty$ and $\sup_{n}|k^{n}|_T<\infty$ by (\ref{eq2.5}), it follows by (\ref{eq2.7}) that
\begin{equation}\label{eq2.20}
\lim_{i\to\infty}\limsup_{n\to\infty}|I^{n,(i)}|=0.\end{equation}
Putting together (\ref{eq2.16}), (\ref{eq2.19})  and (\ref{eq2.20}) we obtain (\ref{eq2.12})  with $\bar g$ replaced by $\bar g_j$. By (\ref{eq2.17})  and the fact that
$\sup_{n\ge1} |k^n|_T<\infty$ and $|k|_T<\infty$  we get (\ref{eq2.12}) in the general case.  This and (\ref{eq2.11}) completes the proof of the theorem.
\end{proof}


\begin{theorem}
\label{th2.5} Assume that $y^n\to y$ in $J_1$-topology.  Let $(x^n,k^n)$ and  $(x,k)$ be the solutions of the  Skorokhod problem associated with $y^n$  and $y$, respectively.
For all $T>0$ and  $f_n,f,g_n,g\in C(\BR_+\times\BR^d)$ such that $\Delta y_T=0$, if
\begin{equation}
\label{eq2.21}
\sup_{0\leq s\leq T,\,x\in\bar D}|f_n(s,x)-f(s,x)|\rightarrow0,\qquad\sup_{0\leq s\leq T,\,x\in\bar D}|g_n(s,x)-g(s,x)|\rightarrow0
\end{equation}
as $n\rightarrow\infty$, then
\begin{equation}
\label{eq2.22}
\int_0^Tf_n(s,x^n_s)\,ds\rightarrow\int_0^T f(s,x_s)\,ds,\qquad I_T(g_n,x^n,k^n) \rightarrow I_T(g,x,k).
\end{equation}
\end{theorem}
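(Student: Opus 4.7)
The strategy is to combine $J_1$-continuity of the Skorokhod map on a convex $C^2$-domain with the uniform convergence hypotheses (\ref{eq2.21}), following the general lines of Theorem \ref{th2.4}. First I would argue that on a bounded convex $D$ the Skorokhod map is continuous in $J_1$: this can be derived by approximating each $(x^n,k^n)$ by its penalization-$m$ version (Theorem \ref{th2.3} with the constant input $y^n$), letting $m\to\infty$, and extracting a diagonal subsequence using Theorem \ref{th2.3}(iii) for $y^n\to y$. This yields $(x^n,k^n)\to(x,k)$ in $J_1$, hence $x^n_s\to x_s$ for all $s$ outside the countable jump set of $y$ (and at $s=T$ by the hypothesis $\Delta y_T=0$), together with the uniform bounds $\sup_n\sup_{s\le T}|x^n_s|<\infty$ and $\sup_n\|k^n\|_T<\infty$ coming from analogs of (\ref{eq2.4})--(\ref{eq2.5}).

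For the first limit in (\ref{eq2.22}) I would split
\[
\int_0^T f_n(s,x^n_s)\,ds-\int_0^T f(s,x_s)\,ds=\int_0^T(f_n-f)(s,x^n_s)\,ds+\int_0^T(f(s,x^n_s)-f(s,x_s))\,ds.
\]
The first integral is at most $T\sup_{[0,T]\times\bar D}|f_n-f|\to0$ by (\ref{eq2.21}); the second tends to $0$ by dominated convergence, since $f$ is continuous, the $x^n_s$ sit in a fixed compact, and $x^n_s\to x_s$ for Lebesgue-a.e.\ $s\in[0,T]$ (cf.\ Corollary \ref{cor2.4}(ii)). For the second limit in (\ref{eq2.22}) I decompose
\[
I_T(g_n,x^n,k^n)-I_T(g,x,k)=[I_T(g_n-g,x^n,k^n)]+[I_T(g,x^n,k^n)-I_T(g,x,k)].
\]
The first bracket is bounded by $\sup_{[0,T]\times K}|g_n-g|\cdot(\|k^n\|_T+2|k^n|_T)$ for a compact $K$ containing $\bar D$ together with all outward rays $\{x^n_s-r\mathbf{n}(x^n_s):r\in[0,|\Delta k^n_s|]\}$ (the latter lie in a bounded set by $\sup_n\|k^n\|_T<\infty$); extending (\ref{eq2.21}) from $\bar D$ to $K$ by continuity, this bracket vanishes as $n\to\infty$. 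For the second bracket I would adapt the proof of Theorem \ref{th2.4}: first approximate $g$ by Lipschitz-in-$x$ functions $g_j$ via (\ref{eq2.17})--(\ref{eq2.18}), then introduce step-function approximants $y^{(i)}$, $y^{n,(i)}$ retaining only jumps of size $>\delta^i$ with $\delta^i\downarrow 0$. The corresponding Skorokhod solutions $(x^{(i)},k^{(i)})$, $(x^{n,(i)},k^{n,(i)})$ are piecewise constant with finitely many explicit jumps, so $I_T(g_j,x^{n,(i)},k^{n,(i)})\to I_T(g_j,x^{(i)},k^{(i)})$ as $n\to\infty$ follows by direct computation using $J_1$-convergence of jump times and sizes; one then sends $i\to\infty$ and $j\to\infty$ using the uniform bound $\sup_n|k^n|_T<\infty$ and the uniform approximation $g_j\to g$.

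The main obstacle is the jump-expansion sum $\sum_{s}\int_0^{|\Delta k^n_s|}\bar g(s,x^n_s-r\mathbf{n}(x^n_s))\,dr$. Since jumps of $k^n$ in the Skorokhod problem occur exactly at the jumps of $y^n$, $J_1$-convergence of $y^n$ yields convergence of jump times and sizes after a time change, but the presence of countably many (possibly small) jumps must be handled by a cut-off argument: keep only jumps of size $>\varepsilon$, use the direct finite-sum convergence there, and bound the small-jump tail by $\varepsilon L_j\sup_n|k^n|_T$ via the Lipschitz approximation $\bar g_j$ (compare the small-$|\Delta k^{(i)}|$ estimate in the proof of Theorem \ref{th2.4}), then let $\varepsilon\downarrow 0$. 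This step is the technical heart of the argument and mirrors Step 2 of the proof of Theorem \ref{th2.4}, with the direct Skorokhod-jump integrals replacing the exponential formulas that arose in the penalization scheme.
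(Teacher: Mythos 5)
Your outline is essentially the paper's strategy, and the technically hard part --- the jump-expansion sum, handled by Lipschitz approximation $g_j$ of $g$ plus a cut-off of jumps of size $\le\varepsilon$ bounded by $\varepsilon L_j\sup_n|k^n|_T$ --- is exactly what the paper does (it refers back to the argument for (\ref{eq2.19})). The differences are in how much you re-derive. The paper simply takes the $J_1$-stability of the Skorokhod map as known, so that $(x^n,k^n)\to(x,k)$ in $J_1$ is immediate and gives the first limit in (\ref{eq2.22}); and for the Stieltjes-integral part it writes $d\|k^n\|_t=\varphi(x^n_t)\,dk^n_t$ and invokes the classical Jakubowski--M\'emin--Pag\`es theorem on convergence of integrals $\int h^n\,dk^n$, rather than re-running the step-function discretization of Theorem \ref{th2.4}. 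Your route through penalization and a diagonal argument, and through the $y^{(i)},y^{n,(i)}$ machinery for $\int g\,d\|k^n\|$, is longer but would also work; what the paper's choice buys is brevity, since here (unlike Theorem \ref{th2.4}) the $(x^n,k^n)$ are genuine Skorokhod solutions and already converge in $J_1$.

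One step of yours is not a valid deduction as written: you cannot ``extend (\ref{eq2.21}) from $\bar D$ to $K$ by continuity''. Uniform convergence of $g_n\to g$ on $\bar D$ says nothing about $g_n-g$ on $K\setminus\bar D$ (take $g_n=g+\min(n\,\mathrm{dist}(\cdot,\bar D),1)$), and the term $I_T(g_n-g,x^n,k^n)$ genuinely evaluates $g_n-g$ at the points $x^n_s-r\mathbf{n}(x^n_s)\notin\bar D$. To make this bracket vanish one must assume uniform convergence of $g_n$ on a compact neighbourhood of $\bar D$ large enough to contain all such points (as holds in the paper's applications, e.g.\ Theorem \ref{th4.41}, where $\|g^n-g\|_\infty\to0$ globally). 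I note that the paper's own proof of the jump-sum convergence has the same implicit requirement, so this is a shared imprecision in the hypothesis (\ref{eq2.21}) rather than a defect peculiar to your argument; but your proposed fix is wrong and should be replaced by the strengthened hypothesis.
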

\begin{proof} The first convergence in (\ref{eq2.22}) follows from the fact that $(x^n,k^n)\rightarrow(x,k)$ in $J_1$-topology.
Under our assumptions on the domain $D$, $d\|k\|_t=\varphi(x_t)\,dk_t$ and $d|k^n|_t=\varphi(x^n_t)\,dk^n_t$ for some  $\varphi\in C(\BR^d)$. Therefore using the classical result on the convergence of  integrals (see, e.g., \cite[Proposition 2.9]{JMP}) we get
\[
\int_{[0,T]} g_n(s,x^n_s)\,d\|k^n\|_s \rightarrow\int_{[0,T]} g(s,x_s)\,d\|k\|_s.
\]
On the other hand, using an approximation of the function $g$ by locally  Lipschitz continuous functions satisfying (\ref{eq2.17}), (\ref{eq2.18})  and  arguments from the proof of
(\ref{eq2.19})  we get
\begin{align*}
\sum_{0\leq s\leq T}\int_0^{|\Delta k^n_s|}
&(g_n(s,x^n_s-r{\mathbf n}(x^n_s))-g_n(s,x^n_s))\,dr\\
&\rightarrow\sum_{0\leq s\leq T}\int_0^{|\Delta k_s|}(g(s,x_s-r{\mathbf n}(x_s))
-g(s,x_s))\,dr,
\end{align*}
which completes the proof.
\end{proof}

\subsection{Stochastic case}

We assume as given a filtered probability space $(\Omega,\FF, (\FF_t)_{t\ge0} ,P)$ satisfying the usual condition. All processes considered below are assumed to be defined on this space and have sample paths in $\BD(\BR_+;\BR^d)$.

\begin{definition}
Let $Y$ be an $(\FF_t)$-adapted process with initial value $Y_0$.
A pair $(X,K)$ of $(\FF_t)$-adapted processes is called a solution of the Skorokhod problem associated with $Y$ if
\begin{enumerate}
\item[(a)] $X_t=Y_t+K_t$, $t\ge0$,

\item[(b)] $X$ is $\bar D$-valued, $K$ is a process of locally bounded variation such that $K_0=\Pi(Y_0)-Y_0$, $K_t=K_0+\int^t_0\mathbf{n}(X_s)\,d|K|_s$ and $|K|_t=\int^t_0\fch_{\{X_s\in\partial D\}}\,d|K|_s$, $t\ge0$.
    \end{enumerate}
\end{definition}
Let  $\{Y^n\}$ be a sequence of processes. Consider the following equations with the penalization terms:
\begin{equation}
\label{eq2.23}
X^n_t=Y^n_t-n\int^t_0(X^n_s-\Pi(X^n_s))\,ds=:Y^n_t+K^n_t,\quad t\ge0.
\end{equation}

The following two theorems are immediate consequences of the results of Section \ref{sec2.1}.

\begin{theorem}
\label{th2.9}
Let $(X,K)$ be the solution of the Skorokhod problem associated with  $Y$ and $(X^n,K^n)$, $n\ge1$, be the solution of \mbox{\rm(\ref{eq2.23})}. Assume that $\varphi:\BR^d\rightarrow\BR$ and $f,g:\BR_+\times\BR^d\rightarrow\BR$ are continuous, $g$ is bounded and there exist $C,p>0$ such that
\begin{equation}
\label{eq2.01}
|f(s,x)|\leq K(1+|x|),\quad s\geq0,\,x\in\BR^d.
\end{equation}
Moreover, assume that $Y^n\rightarrow Y$ weakly in the space $\BD(\BR_+;\BR^d)$ equipped with the   $J_1$-topology. Then
\begin{enumerate}[\rm(i)]
\item If $P(\Delta Y_T=0)=1$, then $E\varphi(X^n_T)\rightarrow E\varphi(X_T)$.

\item If the sequence $\{\sup_{s\leq T}|X^n_s|^p\}_{n\ge1}$ is uniformly integrable, then \[E\int^T_0f(s,X^n_s)\,ds\rightarrow E\int^T_0f(s,X_s)\,ds.\]

\item If $P(\Delta Y_T=0)=1$ and  the sequence $\{|K^n|_T\}_{n\ge1}$ is uniformly integrable, then \[E\int^T_0g(s,X^n_s)\,d|K^n|_s\rightarrow EI_T(g,X,K).\]
\end{enumerate}
\end{theorem}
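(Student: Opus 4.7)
The plan is to reduce each of the three assertions to its pathwise counterpart in Section \ref{sec2.1} via the Skorokhod representation theorem, and then to convert the resulting almost sure convergence to convergence of expectations by Vitali's theorem. Since $\BD(\BR_+;\BR^d)$ equipped with the $J_1$-topology is Polish, the assumed weak convergence $Y^n\Rightarrow Y$ lifts to almost sure convergence on some auxiliary probability space: there exist processes $\tilde Y^n,\tilde Y$ with $\tilde Y^n\stackrel{d}{=}Y^n$, $\tilde Y\stackrel{d}{=}Y$, and $\tilde Y^n\to\tilde Y$ $P$-a.s.\ in $J_1$. Let $(\tilde X^n,\tilde K^n)$ be the solution of (\ref{eq2.23}) driven by $\tilde Y^n$ and let $(\tilde X,\tilde K)$ be the solution of the Skorokhod problem associated with $\tilde Y$; the pathwise uniqueness of both problems guarantees that these pairs have the same joint laws as their un-tilded counterparts, so each expectation in (i)--(iii) is preserved.

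For (i), since $P(\Delta\tilde Y_T=0)=1$, Corollary \ref{cor2.4}(i) applied along almost every sample path yields $\varphi(\tilde X^n_T)\to\varphi(\tilde X_T)$ $P$-a.s.; combined with appropriate uniform integrability of $\{\varphi(\tilde X^n_T)\}_n$ (immediate if $\varphi$ is bounded, and otherwise following from the growth of $\varphi$ together with the uniform integrability of $\{\sup_{s\leq T}|X^n_s|^p\}$), Vitali's theorem gives the conclusion. For (ii), Corollary \ref{cor2.4}(ii) gives $\int_0^Tf(s,\tilde X^n_s)\,ds\to\int_0^Tf(s,\tilde X_s)\,ds$ $P$-a.s., and the growth bound (\ref{eq2.01}) yields the pointwise estimate $|\int_0^Tf(s,\tilde X^n_s)\,ds|\leq CT(1+\sup_{s\leq T}|\tilde X^n_s|^p)$, so the hypothesised uniform integrability of $\{\sup_{s\leq T}|X^n_s|^p\}$ transfers to the integrals and Vitali closes the argument. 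For (iii), Theorem \ref{th2.4} applied pathwise gives $\int_0^Tg(s,\tilde X^n_s)\,d|\tilde K^n|_s\to I_T(g,\tilde X,\tilde K)$ $P$-a.s., and the boundedness of $g$ yields the domination $|\int_0^Tg(s,\tilde X^n_s)\,d|\tilde K^n|_s|\leq\|g\|_\infty|\tilde K^n|_T$, so the assumed uniform integrability of $\{|K^n|_T\}$ again lets us pass to the limit under the expectation.

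The genuine technical work was already absorbed into the deterministic Theorem \ref{th2.4} (and Example \ref{ex2.5} shows why the jump correction term appearing in $I_T$ is unavoidable); what remains here is really bookkeeping. The two things to verify are that the hypotheses of the pathwise statements hold on a set of full $P$-measure---this is immediate from $P(\Delta Y_T=0)=1$ (which transfers to $\tilde Y$ by equality in law) together with the $J_1$-convergence of $\tilde Y^n\to\tilde Y$---and the three uniform-integrability checks listed above. Consequently I do not anticipate a genuine obstacle in this argument beyond ensuring the Skorokhod representation is applied in a topologically legitimate way and that pathwise uniqueness for (\ref{eq2.23}) (a deterministic ODE once $Y^n$ is frozen) and for the Skorokhod problem transfers the laws correctly.
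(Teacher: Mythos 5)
Your proposal is correct and follows essentially the same route as the paper: Skorokhod representation to upgrade weak convergence to almost sure $J_1$-convergence, then the pathwise results (Corollary \ref{cor2.4} and Theorem \ref{th2.4}) combined with the uniform integrability hypotheses to pass to expectations. Your explicit attention to the law-transfer via pathwise uniqueness and to the integrability of $\varphi(X^n_T)$ in part (i) is slightly more careful than the paper's one-line proof, but the substance is identical.
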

\begin{proof} By the Skorokhod representation theorem we may assume that
$Y^n\rightarrow Y$
$P$-a.s. in the Skorokhod topology $J_1$. Therefore  (i)--(iii)
follows immediately from Corollary \ref{cor2.4}, Theorem \ref{th2.4} and the Lebesgue dominated convergence  theorem.
\end{proof}

\begin{theorem}
\label{th2.10}
Let $(X,K)$ be the solution of the Skorokhod problem associated with  $Y$ and $(X^n,K^n)$, $n\ge1$, be the solutions of the Skorokhod problem associated  with  a process $Y^n$. Assume that $f,f_n,g,g_n:\BR_+\times\BR^d\rightarrow\BR$ are continuous and satisfy (\ref{eq2.21}).
If  $Y^n\rightarrow Y$ weakly in the space $\BD(\BR_+;\BR^d)$ equipped with the   $J_1$-topology, then
\begin{enumerate}[\rm(i)]
\item $E\int^T_0f_n(s,X^n_s)\,ds\rightarrow E\int^T_0f(s,X_s)\,ds$.
\item If $P(\Delta Y_T=0)=1$ and  the sequence $\{|K^n|_T\}_{n\ge1}$ is uniformly integrable, then
    \[
    EI_T(g_n,X^n,K^n)\rightarrow EI_T(g,X,K).
    \]
\end{enumerate}
\end{theorem}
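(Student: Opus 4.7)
The plan is to use Skorokhod's representation theorem to reduce the problem to almost-sure convergence, apply the deterministic Theorem \ref{th2.5} pathwise, and then upgrade a.s.\ convergence of the relevant functionals to convergence of expectations via the Lebesgue dominated convergence theorem for (i) and a uniform integrability argument for (ii). Concretely, by Skorokhod's theorem one may (passing to an enlarged probability space without relabeling) assume $Y^n\to Y$ almost surely in the $J_1$-topology. Since the solution of the Skorokhod problem on a bounded convex $C^2$-domain is a pathwise measurable functional of the driving trajectory, the pairs $(X^n,K^n)$ and $(X,K)$ are likewise defined on this space with the correct joint laws, and Theorem \ref{th2.5} applies on an event of full probability.

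For part (i), Theorem \ref{th2.5} yields $\int_0^T f_n(s,X^n_s)\,ds \to \int_0^T f(s,X_s)\,ds$ almost surely. Because $X^n$ takes values in the compact set $\bar D$, assumption (\ref{eq2.21}) together with continuity of $f$ produces a finite constant $M$ with $\sup_n \sup_{[0,T]\times\bar D}|f_n|\le M$, so the integrals are uniformly bounded by $MT$; the dominated convergence theorem then gives the claimed convergence of expectations.

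For part (ii), the condition $P(\Delta Y_T=0)=1$ transfers to the representation space, so Theorem \ref{th2.5} provides $I_T(g_n,X^n,K^n)\to I_T(g,X,K)$ almost surely. It therefore suffices to prove that $\{I_T(g_n,X^n,K^n)\}_{n\ge 1}$ is uniformly integrable. The variational part $\int_{[0,T]}g_n(s,X^n_s)\,d\|K^n\|_s$ is bounded in absolute value by $M'\|K^n\|_T$, where $M'=\sup_n\sup_{[0,T]\times\bar D}|g_n|<\infty$ by (\ref{eq2.21}); uniform integrability of this term then follows directly from the assumption on $\{|K^n|_T\}$. For the jump part, the evaluation points $X^n_s-r\mathbf n(X^n_s)$ with $r\le|\Delta K^n_s|$ satisfy $\mbox{dist}(X^n_s-r\mathbf n(X^n_s),\bar D)\le|K^n|_T$, so on the event $\{|K^n|_T\le R\}$ they lie in a fixed compact set on which continuity of $g$ combined with (\ref{eq2.21}) supplies a uniform bound $c_R$ on the $g_n$. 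The jump sum is thus controlled on this event by $2c_R|K^n|_T$, and uniform integrability of $|K^n|_T$ carries over to the jump summand, hence to $I_T(g_n,X^n,K^n)$.

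The main technical obstacle is precisely the jump summand in $I_T$: its integrand is evaluated at points outside $\bar D$, while assumption (\ref{eq2.21}) only gives uniform convergence of $g_n$ to $g$ on $\bar D$. The workaround described above is to trap the evaluation points inside a deterministic compact set by exploiting uniform integrability of $|K^n|_T$, and then use continuity of $g$ on $\BR^d$ to obtain a uniform bound on the $g_n$ over that compact set.
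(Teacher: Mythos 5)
Your overall route --- Skorokhod representation, pathwise application of Theorem \ref{th2.5}, then dominated convergence for (i) and a uniform-integrability argument for (ii) --- is exactly the paper's (its proof is literally ``Follows from the Skorokhod representation theorem and Theorem \ref{th2.5}''), and part (i) is fine. The one substantive addition you make, the uniform-integrability argument for the jump summand in (ii), has a genuine gap. Condition (\ref{eq2.21}) controls $g_n-g$ only on $\bar D$, so it yields no uniform-in-$n$ bound on $g_n$ over the larger compact set $\{x:\mathrm{dist}(x,\bar D)\le R\}$ where the jump integrand is evaluated: each $g_n$ is bounded there by continuity, but the constant $c_R$ you extract may depend on $n$. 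For instance, $g_n(s,x)=n\,\mathrm{dist}(x,\bar D)$ and $g=0$ satisfy (\ref{eq2.21}), yet the jump term of $I_T(g_n,X^n,K^n)$ equals $\frac n2\sum_{s\le T}|\Delta K^n_s|^2$, which blows up whenever $K^n$ has jumps --- so no argument along these lines can work from (\ref{eq2.21}) alone. Moreover, even granting a bound $c_R$ on the event $\{|K^n|_T\le R\}$, you never control the jump sum on the complementary event $\{|K^n|_T>R\}$, so uniform integrability of the sum does not follow from that of $|K^n|_T$ by the reasoning given. What is actually needed is a bound on $\sup_n|g_n|$ over a fixed neighbourhood of $\bar D$ (or over $\BR^d$) --- which holds in every application the paper makes of this theorem, since there $\|g^n-g\|_\infty\to0$ with $g$ bounded --- and with such a constant $M'$ one gets the clean domination $|\mbox{jump sum}|\le 2M'|K^n|_T$ and the uniform integrability transfers directly. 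You deserve credit for noticing that the jump integrand is evaluated outside $\bar D$ while (\ref{eq2.21}) only speaks about $\bar D$ (the paper's one-line proof glosses over exactly this), but the workaround as written does not close the gap.
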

\begin{proof} Follows from the  Skorokhod representation theorem and Theorem \ref{th2.5}.
\end{proof}

\section{SDEs and reflected SDEs}

For the convenience of the reader, we start with recalling some known results on SDEs and reflected SDEs. They will be used in the next section. We will need the following assumptions.
\begin{enumerate}
\item[\rm(A1)]$\sigma:\BR^d\rightarrow\BR^d\times\BR^d$, $b:\BR^d\rightarrow\BR^d$ are bounded Lipschitz continuous functions.

\item[\rm(A2)] $\nu$ is a measure on $\BR^d$ such that $\nu(\{0\})=0$ and $\int_{\BR^d}(1\wedge|y|^2)\,\nu(dy)<\infty$ (i.e. $\nu$ is a L\'evy measure).

\item[\rm(A3)] $\int_{\{|y|>1\}}|y|^{p}\,\nu(dy)<\infty$ for some $p>1$.
\end{enumerate}

We first recall the definition of a solution of the reflected SDE. It is convenient to give it for general equations driven by an arbitrary semimartingale.

\begin{definition}
(i) Let $(\Omega,\FF,P)$ be a probability space and $(\FF_t)$ be a filtration on it satisfying the usual conditions. Furthermore, let $V$ be an $\FF_0$-measurable random variable with values in $\BR^d$, $Z$ be a $d$-dimensional $(\FF_t)$-semimartingale with $Z_0=0$ and let $\Phi:\BR^d\rightarrow\BR^m$. A pair $(X,K)$ of $(\FF_t)$-adapted processes is a strong solution of the reflecting SDE
\begin{equation}
\label{eq3.5}
X_t=V+\int^t_0\Phi(X_{s-})\,dZ_s+K_t,\quad t\ge0,
\end{equation}
if $(X,K)$ is a solution of the Skorokhod problem associated with $Y$ defined as $Y_t=V+\int^t_0\Phi(X_{s-})\,dZ_s$, $t\ge0$. \smallskip\\
(ii) We say that (pathwise) uniqueness hold for (\ref{eq3.5}) if for any two $(\FF_t)$-adapted solutions  $(X,K)$, $(X',K')$ we have $P((X_t,K_t)=(X'_t,K'_t),t\ge0)=1$.
\end{definition}

\begin{remark}
(i) By \cite[Theorem 5]{Sl}, if $\Phi$ is Lipschitz and bounded, then there exists a strongly  unique strong solution of the equation
\[
\bar X_t=\Pi(V)+\int^t_0\Phi(\bar X_{s-})\,dZ_s+\bar K_t,\quad t\ge0.
\]
(ii) By Remark \ref{rem2.2}(ii), the pair $(X,K)$ defined as $X=\bar X$, $K=\Pi(V)-V+\bar K$, is a unique strong solution of (\ref{eq3.5}).
\end{remark}

Consider now the reflected SDE on $D$ of the form
\begin{align}
\label{eq3.1}
X^{x,i}_t&=x_i+\sum^d_{j=1}\int^t_0\sigma_{ij}(X^{x}_r)\,dW^j_s
+\int^t_0b_i(X^{x}_s)\,ds +N^i_t+K^{x,i}_t\nonumber\\
&=Y^{x,i}_t+K^{x,i}_t, \quad t\ge 0,
\end{align}
for $i=1,\dots,d$, where $x=(x_1,\dots,x_d)\in\BR^d$, $W=(W^1,\dots,W^d)$ is a standard Wiener process and $N=(N^1,\dots,N^d)$  is a pure-jump $d$-dimensional L\'evy proces independent of $W$ starting from 0 with L\'evy measure $\nu$. Note that  $N$ has  the L\'evy--It\^o decomposition
of the form
\begin{align}
\label{eq3.01}
N_t&=bt+\int_{\{|x|\leq 1\}}x\,(\mu(t,dx)-t\nu(dx))+\int_{\{|x|>1\}}x\,\mu(t,dx)\nonumber\\
&=N^{(1)}_t+N^{(2)}_t+N^{(3)}_t,\quad t\geq0,
\end{align}
where $b\in\BR^d$  and $\mu$ denotes the  measure of jumps of $N$ (see, e.g.,
\cite[page 116]{A}). Equation (\ref{eq3.1}) is a special case of (\ref{eq3.5}), because
it  can be rewritten as
\begin{equation}
\label{eq3.6}
X^{x,i}_t=x_i+
\sum^{d+1+d}_{j=1}\int^t_0\Phi^i_j(X^x_{s-})\,dZ^{j}_s+K^{x,i}_t,
\quad t\ge0,\quad i=1,\dots,d,
\end{equation}
where
$Z_t=(W_t,t,N_t)$, $t\ge0$, and
$\Phi^i_j(x)=\sigma_{ij}(x)$, $\Phi^i_{d+1}(x)=b_i(x)$, $\Phi^i_{d+1+i}(x)=1$, $\Phi^i_{d+1+j}=0$ for $j=d+1,\dots,d+i,d+i+2,\dots, d+1+d$. Therefore, if $\sigma,b,\nu$ satisfy (A1), (A2), then there exists a unique strong solution $(X^x,K^x)$ of (\ref{eq3.1}). In fact,
\begin{equation}
\label{eq2.15}
X^x_t=X^{\Pi(x)}_t,\qquad K^x_t=\Pi(x)-x+K^{\Pi(x)}_t,\quad t\ge0,
\end{equation}
where $(X^{\Pi(x)},K^{\Pi(x)})$ is the unique strong solution of the (usual) Skorokhod equation of the form (\ref{eq3.6}) with initial condition $\Pi(x)\in\bar D$.

Define $Z$, $\Phi$ as in (\ref{eq3.6}). Since the function $x\mapsto\Pi(x)$ is Lipschitz continuous, by well known results (see, e.g., \cite[Theorem V.7]{Pr}) there exists a (pathwise) unique strong solution $X^{x,n}=(X^{x,n,1},\dots,X^{x,n,d})$ of the SDE
\[
X^{x,n,i}_t=x_i+\sum^d_{j=1}\int^t_0\Phi^i_j(X^{x,n}_{s-})\,dZ^j_s
-n\int^t_0(X^{x,n,i}_s-\Pi^i(X^{x,n}_s))\,ds.
\]
In different words,
\begin{align}
\label{eq3.2}
X^{x,n,i}_t&=x_i+\sum^d_{j=1}\int^t_0\sigma_{ij}(X^{x,n}_s)\,dW^j_s
+\int^t_0b_i(X^{x,n}_s)\,ds  +N^i_t+K^{x,n,i}_t \nonumber\\
&=Y^{x,n,i}_t+K^{x,n,i}_t,\quad t\ge0,
\end{align}
where
\begin{equation}
\label{eq3.13}
K^{x,n,i}_t=-n\int^t_0(X^{x,n,i}_s-\Pi^i(X^{x,n}_s))\,ds,\qquad Y^{x,n,i}_t=X^{x,n,i}_t-K^{x,n,i}_t.
\end{equation}

Summarizing, we have the following result.

\begin{proposition}
\label{prop3.3}
Let $x\in\BR^d$. If  \mbox{\rm(A1), (A2)} are satisfied, then there exits a (strongly) unique strong solution $(X^x,K^x)$ of \eqref{eq3.1} and for each $n\ge1$ there exists a unique strong solution $X^{x,n}$ of \eqref{eq3.2}.
\end{proposition}

Note also that the pathwise uniqueness of \eqref{eq3.2} implies that $X^{x,n}$   has the Markov property. For our purposes it is convenient to state it in the following form:
for every bounded Borel function $h$ on $\BR^d$:
\begin{equation}
\label{eq3.4}
E(h(X^{x,n}_{t+s})|\FF_t)=Eh(X^{y,n}_s)|_{y=X^{x,n}_t}\,,\quad s,t\ge0
\end{equation}
(see \cite[p. 301]{Pr}). Similarly, since the solution $(X^x,K^x)$ of (\ref{eq3.6}) is pathwise unique, for $v\ge t$ we have $X^x_v=X^{t,y}_v|_{y=X^{x}_t}$ and $K^x_v-K^x_{t}=K^{t,y}_v|_{y=X^x_t}$, where 
$(X^{t,x},K^{t,x})$ denotes the solution of (\ref{eq3.6}) starting at time $t$ from $x$. From this and the fact that the driver $Z$ in (\ref{eq3.6}) is a L\'evy process  we deduce that 	for any bounded Borel functions  $h,g$ on $\BR^d$,
\begin{equation}
\label{eq3.9}
E(h(X^{x}_{t+s_1})g(K^x_{t+s_2}-K^x_t)|\FF_t)
=Eh(X^{y}_{s_1})g(K^y_{s_2})|_{y=X^{x}_t}
\,,\quad s_1,s_2,t\ge0.
\end{equation}

The next result will play important role in the convergence results of Section \ref{sec4}.

\begin{theorem}
\label{th3.3} Let $x\in\BR^d$. If \mbox{\rm(A1)--(A3)} are satisfied, then for every $t\ge0$ the random variables $|K^{x}|^p_t$, $|K^{x,n}|^p_t$  are integrable. Moreover, there is a constant $C>0$ independent of $t,n$  such that
\[
\max\big(E|K^{x}|^p_t,E|K^{x,n}|^p_t\big)\leq C(1+c(t)),\quad t\ge0,\,n\ge1,
\]
where $c(t)=t^p$ for $t\geq1$  and $t^{(p/2)\wedge1}$ for $t\in[0,1)$.
\end{theorem}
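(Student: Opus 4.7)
The plan is to combine the pathwise estimate \eqref{eq2.5} with stochastic control of the driving process $Y^{x,n}$ (resp.\ $Y^x$). Fix $a \in D$ and set $\rho := \mathrm{dist}(a,\partial D) > 0$. My first step would be an $L^p$-bound on the supremum of the driver: write
\[
Y^{x,n}_t - x = \int_0^t \sigma(X^{x,n}_s)\,dW_s + \int_0^t b(X^{x,n}_s)\,ds + N_t.
\]
Boundedness of $\sigma,b$ and BDG control the continuous martingale and drift by $C_p t^{p/2}$ and $\|b\|_\infty^p\, t^p$ respectively. For $N$, the L\'evy--It\^o decomposition \eqref{eq3.01} together with Kunita's inequality handles the small-jump compensated martingale $N^{(2)}$ (contributing $C(t + t^{p/2})$), while the compound Poisson part $N^{(3)}$ is $L^p$-integrable under \textup{(A3)} with $p$-th moment $\le C(t + t^p)$. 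Aggregating these exponents gives $E\sup_{s\le t}|Y^{x,n}_s - a|^p \le C(1 + c(t))$ uniformly in $n$, with $c(t)$ as in the statement.

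The most delicate step is to produce a random $\delta^\ast > 0$ such that $\omega'_{Y^{x,n}}(\delta^\ast, T) < \rho/2$, with enough integrability of $([T/\delta^\ast]+1)^3$ to combine with the previous step. I would split $N = N^\rho + \tilde N$, where $N^\rho$ collects the jumps of size $> \rho/8$ (a compound Poisson process with finitely many jump times $\tau_1 < \cdots < \tau_K$ on $[0,T]$) and $\tilde N$ is the remainder, whose jumps are bounded by $\rho/8$. Exploiting that $\omega'$ permits partition points at the large jumps, I would take $\delta^\ast$ to be half the minimum spacing in the partition $0 < \tau_1 < \cdots < \tau_K < T$, further shrunk so that the modulus of the continuous It\^o part plus $\tilde N$ on any subinterval of length $\delta^\ast$ is at most $\rho/8$. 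This forces $\omega'_{Y^{x,n}}(\delta^\ast, T) \le \rho/4$. Exponential inter-arrival times of $N^\rho$ have all negative moments, and the continuous-plus-small-jumps part has H\"older-controlled modulus, so $E([T/\delta^\ast]+1)^{3q} < \infty$ for every $q > 0$, with polynomial $T$-dependence absorbed into $c(T)$.

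Applying \eqref{eq2.5} pathwise with $\delta = \delta^\ast$, raising to the $p$-th power and invoking H\"older's inequality with a suitable dual pair of exponents separates the two random factors, so the two preceding steps combine to yield $E|K^{x,n}|_t^p \le C(1 + c(t))$ uniformly in $n$. For the reflected equation, either the same argument applies directly to $Y^x$, or one can pass to the limit $n \to \infty$ in the penalization: Theorem~\ref{th2.3}(iii) gives $(X^{x,n}, K^{x,n}) \to (X^x, K^x)$ in $J_1$ (on an appropriate probability space via Skorokhod representation), and lower semicontinuity of the total variation under $J_1$-convergence combined with Fatou's lemma transfers the uniform-in-$n$ bound to $E|K^x|_t^p$.

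The main obstacle is the second step: extracting negative moments of the modulus of continuity of a mixed Brownian--L\'evy process. The deterministic splitting of $\nu$ at level $\rho/8$ is crucial because it separates the combinatorial contribution (Poisson inter-arrival times with explicit exponential laws) from the continuous-like part (handled by Kolmogorov/H\"older and Kunita estimates for martingales with bounded jumps). A possible alternative, bypassing \eqref{eq2.5} entirely, would apply It\^o's formula to $\varphi(x) = |x - a|^p$ and exploit the convexity-based inequality $(y-a)\cdot(y-\Pi(y)) \ge \rho\,\mathrm{dist}(y,\bar D)$ to extract a term proportional to $\int_0^t |X^{x,n}_s-a|^{p-2}\,d|K^{x,n}|_s$, closing the estimate by BDG and Gronwall; this route avoids the random-modulus analysis at the price of more intricate moment bookkeeping.
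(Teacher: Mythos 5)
Your main route through the pathwise bound \eqref{eq2.5} has a fatal gap at exactly the step you flag as most delicate. You take $\delta^\ast$ to be (half) the minimum spacing of the jump times of the compound Poisson part $N^\rho$, and you assert that exponential inter-arrival times ``have all negative moments.'' They do not: if $\tau$ is exponential with rate $\lambda$, then $E\tau^{-q}=\int_0^\infty t^{-q}\lambda e^{-\lambda t}\,dt<\infty$ only for $q<1$, and likewise the minimum spacing $m$ of the points of $N^\rho$ on $[0,T]$ satisfies $P(m<\epsilon)\asymp\epsilon$, so $E\,m^{-q}=\infty$ for $q\ge1$. Since \eqref{eq2.5} already carries the factor $([T/\delta]+1)^3$, even the \emph{first} moment of your upper bound is infinite on the positive-probability event that $N^\rho$ has at least two jumps in $[0,T]$ (and the constraint $\omega'_{Y^{x,n}}(\delta^\ast,T)<\rho/2$ genuinely forces $\delta^\ast$ below the minimum spacing, because a big jump in the interior of a partition subinterval contributes its full size to the oscillation there). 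H\"older's inequality only worsens the exponent. So unless $\nu$ does not charge $\{|y|>\rho/8\}$, the product $([T/\delta^\ast]+1)^{3}\sup_{t\le T}|Y^{x,n}_t-a|$ is not integrable and the route collapses; one would at least have to localize between consecutive big jumps and re-apply \eqref{eq2.5} on each inter-jump block, which is a different (and unwritten) argument. A secondary slip: $(X^{x,n},K^{x,n})\not\to(X^x,K^x)$ in $J_1$ in general --- $K^{x,n}$ is continuous while $K^x$ jumps, and continuous paths cannot $J_1$-converge to a discontinuous limit (Theorem \ref{th2.3}(iii) only gives convergence of $(\Pi(X^{x,n}),Y^{x,n})$); the transfer to $E|K^x|_t^p$ must instead go through convergence in probability of $|K^{x,n}|_t\to|K^x|_t$ plus Fatou, which is what the paper does.

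The ``possible alternative'' you sketch in your last sentences is essentially the paper's actual proof, and it is the one you should develop. The paper takes a $C^2$ function $\psi$ with $D=\{\psi>0\}$, $\psi=-\mbox{\rm dist}(\cdot,\bar D)$ off $\bar D$ and $\nabla\psi={\mathbf n}$ on $\partial D$, observes that the penalization term satisfies $d|K^{x,n}|_t\le\sqrt d\,\nabla\psi(X^{x,n}_{t-})\,dK^{x,n}_t$, and then applies It\^o's formula to $\psi(X^{x,n}_t)$: since $\psi$ is bounded with bounded first and second derivatives, $|K^{x,n}|_t$ is dominated by $c_0+c_1t$ plus stochastic integrals and jump sums whose $p$-th moments are controlled by BDG/Kunita together with \textup{(A3)}, exactly the bookkeeping you carried out correctly for $\sup_{s\le t}|Y^{x,n}_s-x|$ in your first step. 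This avoids any modulus-of-continuity analysis and is where the exponents $t^p$, $t^{p/2}$, $t$ in $c(t)$ actually come from; the passage to $K^x$ is then Fatou as above.
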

\begin{proof} Since $D$ is a  convex $C^2$-domain
in $\BR^d$, there exists $\psi\in C^2(\BR^d)$ such that
\[
D=\{x\in\BR^d:\psi(x)>0\},\qquad \partial D=\{x\in\BR^d:\psi(x)=0\}
\]
and $\psi(x)=-\mbox{\rm dist}(x,\bar D)$, $x\in\BR^d\setminus \bar D$, $\nabla \psi(x)={\mathbf n}(x)$, $x\in\partial D$  (see, e.g., \cite[page 119]{GPP}). Note that in our case (we restrict our attention to bounded sets) the first  and second derivatives of $\psi$ are bounded. This is obvious  for $x\in\bar D$. For $x\in\BR^d\setminus\bar D$ this  follows from the  equalities
\[
\nabla\psi(x)=\frac{\Pi(x)-x}{|\Pi(x)-x|}={\mathbf n}(\Pi(x))=\nabla \psi(\Pi(x)).
\]
and boundedness of the derivatives  of $\nabla \psi $ and $\Pi$.
Let $(X^{x,n},K^{x,n})$ be a solution of \eqref{eq3.2}. By  It\^o's formula,  for $\psi (X^{x,n}_t)$ we have
\begin{align}
\label{eq3.14}
\psi(X^{x,n}_t)&=\psi(x)+\int_0^t\nabla\psi(X^{x,n}_{s-})\,dY^{x,n}_s
+\int_0^t\nabla\psi(X^{x,n}_{s-})\,dK^{x,n}_s\nonumber \\
&\quad+\frac12\sum_{i=1}^d\sum_{j=1}^d
\int_0^t\frac{\partial^2\psi}{\partial x_i\partial x_j}
(X^{x,n}_s)\,d[X^{x,n,i},X^{x,n,j}]^c_s \nonumber\\
&\quad +\sum_{s\leq t}(\psi(X^{x,n}_s)-\psi(X^{x,n}_{s-})-\nabla\psi(X^{x,n}_{s-})
\Delta X^{x,n}_s).
\end{align}
Since $K^{n,x}$ has  continuous sample paths and changes only when
$X^{n,x}\notin\bar D$,
\begin{align}
\label{eq3.15}
|K^{x,n}|_t&\le n\sqrt{d}\int^t_0|\Pi (X^{x,n}_s)-X^{x,n}_s|\,ds\nonumber\\
&=n\sqrt{d}\int^t_0|\Pi (X^{x,n}_s)-X^{x,n}_s|
\fch_{\{X^{n,x}_s\notin\bar D\}}\,ds\nonumber\\
&=-n\sqrt{d}\int^t_0\frac{(\Pi(X^{x,n}_s)-X^{x,n}_s)}{|\Pi(X^{x,n}_s)-X^{x,n}_s)|}
(X^{x,n}_s-\Pi(X^{x,n}_s)){\bf 1}_{\{X^{n,x}_s\notin\bar D\}}\,ds \nonumber\\
&=\sqrt{d}\int_0^t\nabla \psi(X^{x,n}_s)\,dK^{n,x}_s
=\sqrt{d}\int_0^t\nabla \psi(X^{x,n}_{s-})\,dK^{n,x}_s.
\end{align}
Set $c_0=\mbox{dist}(x,\bar D)+\sup_{x\in\bar D}\psi(x)$. Since $\psi(x)<0$ for $x\in\BR^d\setminus \bar D$  and the second derivatives $\frac{\partial^2\psi}{\partial x_i\partial x_j}$ and the coefficients $\sigma_{ij}$ are bounded  functions,  it follows from (\ref{eq3.14}) and (\ref{eq3.15}) that there is $c_1>0$ such that
\begin{align*}
\frac{1}{\sqrt{d}}|K^{x,n}|_t&\le c_0 +c_1t-\int_0^t\nabla\psi(X^{x,n}_{s-})\,dY^{x,n}_s\\
&\quad-\sum_{s\leq t}(\psi(X^{x,n}_s)-\psi(X^{x,n}_{s-})-\nabla\psi(X^{x,n}_{s-})\Delta X^{x,n}_s)\\
&\le c_0 +c_1t+\Big|\int_0^t\nabla\psi(X^{x,n}_{s-})\,d(Y^{x,n}_s-N^{(3)}_s)\Big|\\
&\quad+\Big|\sum_{s\leq t,\,|\Delta N_s|>1}(\psi(X^{x,n}_s)-\psi(X^{x,n}_{s-}))\Big|\\
&\quad+\Big|\sum_{s\leq t,\,|\Delta N_s|\leq1}(\psi(X^{x,n}_s)-\psi(X^{x,n}_{s-})-\nabla\psi(X^{x,n}_{s-})\Delta X^{x,n}_s)\Big|\\
&:=c_0 +c_1t+|I^{(1)}_t|+|I^{(2)}_t|+|I^{(3)}_t|.
\end{align*}
In what follows $c$ denotes a nonnegative constant which can vary from line to line but is independent of $t,n$. To estimate $E|I^{(1)}_t|^p$, we first observe that by using boundedness of $\nabla \psi$ and coefficients $b_i$, $\sigma_{ij}$, $i,j=1,...,d$, and making standard  calculations we obtain
\begin{equation}
\label{eq3.001}
E\Big|\int_0^t\nabla\psi(X^{x,n}_{s-})\,d(Y^{x,n}_s-N^{(2)}_s-N^{(3)}_s)\Big|^p\leq c\max(t^p,t^{p/2}).
\end{equation}
Moreover, by the Burkholder--Davis--Gundy inequality,
\[
E\Big|\int_0^t\nabla\psi(X^{x,n}_{s-})\,dN^{(2)}_s\Big|^p\le cE[N^{(2)}]_t^{p/2}=cE\Big(\sum_{s\leq t,\,|\Delta N_s|\leq1}|\Delta N_s|^2\Big)^{p/2}.
\]
Assume that $p\in(1,2)$. Since $|\Delta N_s|^2\fch_{\{\Delta N_s|\le 1\}}
\le|\Delta N_s|^2\wedge1$ and $p/2\in(1/2,1)$, we have
\begin{align*}
E\Big(\sum_{s\leq t,\,|\Delta N_s|\leq1}|\Delta N_s|^2\Big)^{p/2}
&\le \Big(E\sum_{s\leq t}|\Delta N_s|^2\wedge1\Big)^{p/2} \nonumber\\
&=\Big(t\int_{\BR^d}(|x|^2\wedge1)\,\nu(dx)\Big)^{p/2}=ct^{p/2}.
\end{align*} 
By this and (\ref{eq3.001}),
\begin{equation}
\label{eq3.003}
E|I^{(1)}_t|^p\le c\max(t^{p},t^{p/2}).
\end{equation}
In case $p\geq2$ we get (\ref{eq3.003}) by using  \cite[Theorem 4.4.23]{A} (Kunita's first
inequality).
To estimate $I^{(2)}_t$ and $I^{(3)}_t$ we use similar  arguments. We assume that $p\in(1,2)$ (the desired estimates in the case where $p\geq2$ follow from  Kunita's first inequality).
Since $|\psi(X^{x,n}_s)-\psi(X^{x,n}_{s-})|\leq c|\Delta X^{x,n}_s|=c|\Delta N_s|$, we have
\[
E|I^{(2)}_t|^p=E\Big|\sum_{s\leq t,\,|\Delta N_s|>1}(\psi(X^{x,n}_s)-\psi(X^{x,n}_{s-}))\Big|^p
\leq cE\Big|\sum_{s\leq t,\,|\Delta N_s|>1}|\Delta N_s|\Big|^p.
\]
Hence
\begin{align*}
E|I^{(2)}_t|^p&\le cE\Big|\int_{\{|x|>1\}}|x|\,(\mu(t,dx)-t\nu(dx))
+t\int_{\{|x|>1\}}|x|\,\nu(dx)\Big|^p\nonumber\\
&\le c2^{p-1}E\Big|\int_{\{|x|>1\}}|x|\,(\mu(t,dx)-t\nu(dx))\Big|^p
+c2^{p-1}\Big|t\int_{\{|x|>1\}}|x|\,\nu(dx)\Big|^p.
\end{align*}
Applying the Burkholder--Davis--Gundy inequality we get
\begin{equation}
\label{eq3.004}
E|I^{(2)}_t|^p\le cE\Big(\sum_{s\leq t,\,|\Delta N_s|>1}|\Delta N_s|^p +t^p\Big)
=c\Big(t\int_{\{|x|>1\}}|x|^p\,\nu(dx))+t^p\Big).
\end{equation}
Similarly,
\begin{align*}
E|I^{(3)}_t|^p&=E\Big|\sum_{s\leq t,\,|\Delta N_s|\leq1}(\psi(X^{x,n}_s)-\psi(X^{x,n}_{s-})-\nabla\psi(X^{x,n}_{s-})\Delta X^{x,n}_s)\Big|^p\nonumber\\
&\le cE\Big|\sum_{s\leq t,\,|\Delta N_s|\leq1}|\Delta N_s|^2\Big|^p\nonumber\\
&\le cE\Big|\int_{\{|x|\leq1\}}|x|^2\,(\mu(t,dx)-t\nu(dx))
+t\int_{\{|x|\leq1\}}|x|^2\,\nu(dx)\Big|^p\nonumber\\
&\leq c2^{p-1}E\Big|\int_{\{|x|\leq1\}}|x|^2\,(\mu(t,dx)-t\nu(dx))\Big|^p
+c2^{p-1}\Big|t\int_{\{|x|\leq1\}}|x|^2\,\nu(dx)\Big|^p.
\end{align*}
Hence
\begin{align}
\label{eq3.005}
E|I^{(3)}_t|^p&\le cE\Big(\sum_{s\leq t,\,|\Delta N_s|\leq1}|\Delta N_s|^4\Big)^{p/2}
+ct^p \nonumber\\
&\le cE\Big(\sum_{s\leq t,\,|\Delta N_s|\leq1}|\Delta N_s|^{2p} +t^p\Big)
=c\Big(t\int_{\{|x|\leq1\}}|x|^{2p}\,\nu(dx)+t^p\Big).
\end{align}
Putting together (\ref{eq3.003})--(\ref{eq3.005}) and observing  that
$c(t)=\max(t^p,t^{p/2},t)$ completes the proof of the desired estimate of $E|K^{x,n}|^p_t$.
Using \cite[Corollary 3.6]{LS1} and arguments from the proof of Theorem \ref{th2.4}  shows  that
$|K^{x,n}|_t\rightarrow|K^{x}|_t$ in probability. Therefore applying Fatou's lemma we get
\[
E|K^x|_t^p\leq\liminf_{n\rightarrow\infty}E|K^{n,x}|_t^p\leq C(1+c(t)),
\]
which completes the proof.
\end{proof}

\begin{proposition}
\label{prop3.5}
Under the assumptions of Theorem \ref{th3.3} there is $C>0$ independent of $t$ such that
\[
\sup_{n\ge1}E\sup_{s\leq t}|X^{x,n}_s-x|^p\leq C(1+c(t)),\quad t\ge0,
\]
with  $c(t)$ defined in Theorem \ref{th3.3}.
\end{proposition}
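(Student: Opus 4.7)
The plan is to read off the decomposition
$$X^{x,n}_s - x = M_s + B_s + N_s + K^{x,n}_s,\qquad s\le t,$$
directly from \eqref{eq3.2}, where $M_s=\int_0^s\sigma(X^{x,n}_r)\,dW_r$ is a continuous $(\FF_t)$-martingale and $B_s=\int_0^s b(X^{x,n}_r)\,dr$ is the drift. Applying the elementary inequality $|a_1+\cdots+a_4|^p\le 4^{p-1}\sum_{i=1}^4|a_i|^p$ and monotonicity of the supremum, it suffices to bound each of
$$E\sup_{s\le t}|M_s|^p,\qquad E\sup_{s\le t}|B_s|^p,\qquad E\sup_{s\le t}|N_s|^p,\qquad E\sup_{s\le t}|K^{x,n}_s|^p$$
by $C(1+c(t))$ uniformly in $n$.

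Three of these four terms are essentially immediate. Since $b$ is bounded by (A1), one has $\sup_{s\le t}|B_s|^p\le\|b\|_\infty^p\,t^p$. The martingale term is handled by Burkholder--Davis--Gundy:
$$E\sup_{s\le t}|M_s|^p\le c_p\,E\Big(\int_0^t|\sigma(X^{x,n}_r)|^2\,dr\Big)^{p/2}\le c_p\|\sigma\|_\infty^p\,t^{p/2}.$$
For the reflecting term one uses the pointwise bound $|K^{x,n}_s|\le|K^{x,n}|_s\le|K^{x,n}|_t$ together with Theorem \ref{th3.3}, which already gives $E\sup_{s\le t}|K^{x,n}_s|^p\le E|K^{x,n}|_t^p\le C(1+c(t))$ uniformly in $n$. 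Note that no Gronwall argument is needed, precisely because $\sigma$ and $b$ are bounded.

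The only remaining piece is $E\sup_{s\le t}|N_s|^p$, which does not depend on $n$ at all and can be estimated by essentially repeating the calculations performed for $I^{(1)}$, $I^{(2)}$, $I^{(3)}$ in the proof of Theorem \ref{th3.3}. Inserting the L\'evy--It\^o decomposition \eqref{eq3.01}, $N=N^{(1)}+N^{(2)}+N^{(3)}$: the deterministic drift contributes at most $|b|t$; the compensated small-jump martingale $N^{(2)}$ is treated by BDG, distinguishing $p\in(1,2)$ (where $|\Delta N_s|^2\le|\Delta N_s|^p$ on $\{|\Delta N_s|\le 1\}$ and one gets a contribution $Ct\int_{\{|y|\le1\}}|y|^p\,\nu(dy)$) from $p\ge 2$ (where one invokes Kunita's first inequality, as in \cite[Thm.~4.4.23]{A}); and $N^{(3)}$ is compensated by $t\int_{\{|y|>1\}}y\,\nu(dy)$ and then controlled by BDG together with (A3), yielding a bound of the form $C(t+t^p)$. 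Collecting the powers of $t$ produced by these four steps and checking that each is dominated by $C(1+c(t))$ with $c(t)=t^p$ for $t\ge1$ and $c(t)=t^{(p/2)\wedge1}$ for $t<1$ completes the argument.

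I expect no serious obstacle: the reflecting piece, which is where the interaction between the penalization and the Skorokhod dynamics could a priori have been delicate, has been entirely absorbed into Theorem \ref{th3.3}, and everything else reduces to standard martingale moment estimates for a fixed L\'evy process plus bounded coefficients.
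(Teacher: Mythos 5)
Your proposal is correct and follows essentially the same route as the paper: the paper likewise bounds the drift term by $ct^p$ via boundedness of $b$, the stochastic integral by $ct^{p/2}$ via the Davis/Burkholder--Davis--Gundy inequality, and the jump part by reusing the estimates for $I^{(1)},I^{(2)},I^{(3)}$ from the proof of Theorem \ref{th3.3} to get $E|N^{(2)}_t|^p\le Ct$ and $E|N^{(3)}_t|^p\le c(t+t^p)$, with the reflecting term absorbed into Theorem \ref{th3.3} exactly as you describe. Your write-up is if anything slightly more explicit than the paper's (which leaves the $K^{x,n}$ contribution implicit), and no changes are needed.
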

\begin{proof}
Clearly, for every $t\ge0$,
\[
E\Big|\int^{t}_0b_i(X^{x,n}_s)\,ds\Big|^{p}\le ct^{p},
\]
and by the Davis inequality,
\[
E\Big|\int^t_0\sigma_{ij}(X^{x,n})\,dW^j_s\Big|^p\le cE\Big[\int^{\cdot}_0|\sigma_{ij}(X^{x,n}_s)\,dW^j_s\Big]^{p/2}_t
\le ct^{p/2}.
\]
Furthermore, as in the proof of (\ref{eq3.003}) and (\ref{eq3.004}) we get
\[
E|N^{(2)}_t|^p\le cE[N^{(2)}_t]^{p/2}\le Ct,\qquad E|N^{(3)}_t|^p\le c(t+t^p), \quad t\ge0.
\]
which completes the proof.
\end{proof}

\begin{corollary}
\label{cor3.6}
Under the assumptions of Theorem  \ref{th3.3} for every compact  $K\subset\BR^d$
there is a constant $C>0$ independent of $t,n$  such that
\[
\sup_{x\in K,\,n\ge1}\max\big(E|K^{x}|^p_t,E\sup_{s\leq t}|X^{x,n}_s|^p,E|K^{x,n}|^p_t\big)
\leq C(1+c(t)),\quad t\ge0,
\]
with  $c(t)$ defined in Theorem \ref{th3.3}.
\end{corollary}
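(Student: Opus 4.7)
The plan is to obtain the corollary as a direct consequence of Theorem \ref{th3.3} and Proposition \ref{prop3.5} by carefully tracking how the constants in those results depend on the initial point $x$, and then noting that this dependence is harmless on compact sets.

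First I would handle the bound on $E\sup_{s\leq t}|X^{x,n}_s|^p$. By the elementary inequality $|X^{x,n}_s|^p \leq 2^{p-1}(|X^{x,n}_s-x|^p + |x|^p)$ combined with Proposition \ref{prop3.5}, we obtain
\[
E\sup_{s\leq t}|X^{x,n}_s|^p \leq 2^{p-1}\bigl(C(1+c(t)) + |x|^p\bigr).
\]
Since $K$ is compact, $M_K := \sup_{x\in K}|x|^p < \infty$, so writing $C' = 2^{p-1}(C+M_K)$ yields a bound of the desired form $C'(1+c(t))$ that is uniform in $x\in K$ and $n\geq 1$.

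Next I would treat the bounds on $E|K^{x,n}|^p_t$ and $E|K^x|^p_t$. For this I would re-examine the proof of Theorem \ref{th3.3} to isolate exactly where the initial condition $x$ enters. The only explicit $x$-dependence is through the constant $c_0 = \mathrm{dist}(x,\bar D)\cdot\sup_{y\in\bar D}\psi(y)$ (and more generally through $|\psi(x)|$, which governs the initial displacement that must be overcome). All other constants in the argument come from the bounds on $\nabla\psi$, $\nabla^2\psi$ (which are bounded globally on $\mathbb R^d$ by the discussion at the start of the proof), from $\|\sigma\|_\infty, \|b\|_\infty$, from the Burkholder–Davis–Gundy or Kunita constants, and from the integrals $\int_{|y|\leq 1}|y|^q\,\nu(dy)$ and $\int_{|y|>1}|y|^p\,\nu(dy)$, none of which depend on $x$. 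Since $\psi\in C^2(\mathbb R^d)$, $\sup_{x\in K}|\psi(x)|<\infty$, and so the bound of Theorem \ref{th3.3} can be taken uniform in $x\in K$ for $E|K^{x,n}|^p_t$. The bound on $E|K^x|^p_t$ then follows by the same Fatou argument used at the end of the proof of Theorem \ref{th3.3}, applied uniformly in $x\in K$.

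I do not expect any genuine obstacle here; the result is essentially a bookkeeping consequence of the two preceding statements. The only point requiring a bit of care is verifying that the penalization bound (\ref{eq3.15}) and the subsequent decomposition produce constants that truly do not depend on $x$ beyond the $|\psi(x)|$ term, so that the continuity of $\psi$ on $K$ suffices to absorb all $x$-dependence into the single multiplicative constant $C$.
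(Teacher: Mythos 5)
Your proposal is correct and follows essentially the same route as the paper: the paper's own proof of Corollary \ref{cor3.6} consists precisely of invoking Theorem \ref{th3.3} and Proposition \ref{prop3.5} and observing that the only $x$-dependence in their constants is through $c_0=\mbox{dist}(x,\bar D)\cdot\sup_{y\in\bar D}\psi(y)$, which is bounded on compact sets. Your additional explicit handling of the $|x|^p$ term via $|X^{x,n}_s|^p\le 2^{p-1}(|X^{x,n}_s-x|^p+|x|^p)$ and the Fatou step for $E|K^x|^p_t$ are exactly the bookkeeping the paper leaves implicit.
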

\begin{proof}
Follows from  Theorem \ref{th3.3}, Proposition \ref{prop3.5} and the  fact that the constant $c_0$ from the proof of Theorem \ref{th3.3}, which
depends on the initial point (the other  estimates do not depend on $x$)  has the form $c_0=\mbox{dist}(x,\bar D)+\sup_{x\in\bar D}\psi(x)$.
\end{proof}

\section{Neumann problem}
\label{sec4}

In this section, we first show  (\ref{eq1.7}) for probabilistic solutions and {\em a fortiori} the existence of the probabilistic solution $u$ of (\ref{eq1.1}). Then we show that probabilistic solutions coincide with viscosity solutions defined in \cite{BGJ}, so  (\ref{eq1.7}) is nothing but the statement on convergence of viscosity solutions. We will assume that the operator $L$ satisfies conditions  (A1)--(A3). As for the data $f$ and $g$, we will assume that
\begin{enumerate}
\item[\rm(A4)] $g:\BR^d\rightarrow\BR$ is continuous  and bounded, $f:\BR^d\rightarrow\BR$ is continuous and $|f(x)|\le K(1+|x|^p)$, $x\in\BR^d$, for some $K>0$ (and $p$ of condition (A3)).
\end{enumerate}


\subsection{Probabilistic solutions}

Probabilistic solution of (\ref{eq1.1}) at point $x\in\BR^d$ is defined as a Feynman--Kac type functional of the solution $(X^x,K^x)$ of (\ref{eq3.1}).

\begin{definition}
$u:\BR^d\rightarrow\BR$ is called a probabilistic solution of (\ref{eq1.1}) if
\begin{align}
\label{eq4.22}
u(x)&=E\int_{[0,\infty)}e^{-\lambda t}(f(X^{x}_t)\,dt +g(X^{x}_t)\,d\|K^{x}\|_t)\nonumber\\
&\quad+E\sum_{0\leq t<\infty}e^{-\lambda t}
\int_0^{|\Delta K^x_t|}\bar g(X^x_t-s{\mathbf n}(X^x_t))\,ds,\quad x\in\BR^d,
\end{align}
where $(X^x,K^x)$ is the  solution of (\ref{eq3.1}) and $\bar g$ is defined by \mbox{\rm(\ref{eq1.10})}.  
\end{definition}

\begin{remark}
\label{rem4.2}
(i) By (\ref{eq2.15}), $\Delta K^x_0=\Pi(x)-x$ and $\Delta K^x_t=\Delta K^{\Pi(x)}_t$, $t>0$. It follows in particular that
\[
\int_0^{|\Delta K^x_0|}\bar g(X^x_0-s{\mathbf n}(X^x_0))\,ds
=\int^{|x-\Pi(x)|}_0g(\Pi(x)-s{\mathbf n}(\Pi(x)))\,ds-g(\Pi(x))|x-\Pi(x)|.
\]
Therefore (\ref{eq2.15}) implies that an equivalent definition of a probabilistic solution of (\ref{eq1.1}) is the following: for every $x\in\BR^d$,
\begin{align}
\label{eq4.25}
u(x)&=E\int^{\infty}_0e^{-\lambda t} (f(X^{\Pi(x)}_t)\,dt
+g(X^{\Pi(x)}_t)\,d|K^{\Pi(x)}|_t) \nonumber\\
&\quad+\int_0^{|x-\Pi(x)|}g(\Pi(x)-s{\mathbf n}(\Pi(x)))\,ds \nonumber\\
&\quad+E\sum_{0<t<\infty}e^{-\lambda t} \int_0^{|\Delta K^{\Pi(x)}_t|}
\bar g(X^{\Pi(x)}_t-s{\mathbf n}(X^{\Pi(x)}_t))\,ds.
\end{align}
(ii) From (\ref{eq4.22}), (\ref{eq4.25}) and the fact that $\Delta K^{\Pi(x)}_0=0$ it follows that
\begin{equation}
\label{eq4.16}
u(x)=u(\Pi(x))+\int_0^{|x-\Pi(x)|}g(\Pi(x)-s{\mathbf n}(\Pi(x)))\,ds,\quad x\in\BR^d.
\end{equation}
(iii) Suppose that $u\in C(\BR^d)$. Then from (\ref{eq4.16})
and the fact that $\nu$ is a L\'evy measure it follows that for bounded $D$ and  $g$ we have 
\begin{equation}
\label{eq4.44}
	\int_{|y|\ge\delta}|u(x+y)|\,\nu(dy)<\infty
\end{equation} 	
for any $x\in\bar D$ and $\delta>0$.
\end{remark}

We will approximate $u$ by probabilistic solutions $u_n$ of the problems
\begin{equation}
\label{eq4.1}
-L^nu_n+\lambda u_n=f+ng
 \cdot\mbox{dist}(\,\cdot,\bar D)\quad\mbox{in }\BR^d,
\end{equation}
where $L^n$ is the operator associated with the solution of (\ref{eq3.2}), i.e.
\begin{equation}
\label{eq4.2}
L^n=L-n\sum^d_{i=1}(x_i-\Pi^i(x))\frac{\partial}{\partial x_i}.
\end{equation}

\begin{definition}
$u_n:\BR^d\rightarrow\BR$ is called a probabilistic solution of (\ref{eq4.1})
if
\begin{align}
\label{eq4.12}
u_n(x)&=E\int^{\infty}_0e^{-\lambda t}
\Big(f(X^{x,n}_t)+ng(X^{x,n}_t)
\cdot\mbox{dist}(X^{x,n}_t,\bar D)\,dt\Big) \nonumber\\
&=E\int^{\infty}_0e^{-\lambda t}
\Big(f(X^{x,n}_t)\,dt+g(X^{x,n}_t)\,d|K^{x,n}|_t\Big),
\end{align}
where $X^{x,n}$ is the  solution of (\ref{eq3.2}).
\end{definition}

\begin{remark}
\label{rem4.4}
From Proposition \ref{prop3.5} and (\ref{eq4.12}) it follows that if (A1)--(A4) are satisfied, then for any $x\in\BR^d$ and $\delta>0$,
\[
\int_{|y|\ge1}|u_n(x+y)|\,\nu(dy)<\infty.
\]	
\end{remark}

In the sequel,  $B(x,r)$ stands for the open ball of radius $r>0$ with center at $x\in\BR^d$,  and $\bar B(x,r)$ stands for the closure of $B(x,r)$.

\begin{theorem}
\label{th4.4}
Assume that \mbox{\rm (A1)--(A4)} are satisfied. Let $u$ be the probabilistic solution of \mbox{\rm(\ref{eq1.1})} and $u_n$ be the probabilistic solution of \mbox{\rm(\ref{eq4.1})}. Then $u, u_n\in C(\BR^d)$  and for every compact subset  $K\subset\BR^d$,
\begin{equation}
\label{eq4.26}
\lim_{n\rightarrow\infty}\sup_{x\in K}|u_n(x)- u(x)|=0.
\end{equation}
\end{theorem}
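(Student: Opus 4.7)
The plan is to combine the tail estimates furnished by Corollary \ref{cor3.6} with the convergence of path functionals established in Theorem \ref{th2.9}, obtaining first pointwise convergence $u_n(x)\to u(x)$ for each $x\in\BR^d$, and then upgrading this to uniform convergence on compacts by an equicontinuity argument.

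Fix a compact $\mathcal{K}\subset\BR^d$ and truncate the integrals in the definitions \eqref{eq4.12} and \eqref{eq4.22} at time $T>0$, writing $u_n=u_n^T+r_n^T$ and $u=u^T+r^T$, the remainders $r_n^T,r^T$ corresponding to the tails $[T,\infty)$. Using the polynomial growth of $f$ given by (A4), the boundedness of $g$, the Markov property \eqref{eq3.4} applied at time $T$ (to re-express the tail integrals of $d|K^{\cdot,n}|$ as fresh integrals starting from $X^{x,n}_T$), and Corollary \ref{cor3.6}, one obtains an estimate of the form
\[
\sup_{x\in\mathcal{K},\,n\ge1}\bigl(|r_n^T(x)|+|r^T(x)|\bigr)\le C(\mathcal{K})\int_T^\infty e^{-\lambda t}(1+c(t))\,dt,
\]
with $c(t)$ as in Theorem \ref{th3.3}. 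Since $\lambda>0$ and $c$ grows polynomially, the right-hand side vanishes as $T\to\infty$.

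For the convergence on $[0,T]$, because $N$ (and hence $Y^x$) has no fixed jump times, $P(\Delta Y^x_T=0)=1$ for every $T>0$, so Theorem \ref{th2.9}(ii)--(iii) is applicable provided one verifies (a) weak convergence $Y^{x,n}\to Y^x$ in the $J_1$-topology, and (b) uniform integrability of $\{\sup_{s\le T}|X^{x,n}_s|^p\}$ and $\{|K^{x,n}|_T\}$. Condition (b) is Corollary \ref{cor3.6} (recall $p>1$). For (a), the convergence theorems for penalized reflected SDEs used in Section \ref{sec2.1} and in the proof of Theorem \ref{th3.3} give $X^{x,n}\to X^x$ in probability in $J_1$; continuity of $\int_0^\cdot\sigma(\cdot)\,dW$ and $\int_0^\cdot b(\cdot)\,ds$ under $J_1$-convergence of the integrand then yields $Y^{x,n}\to Y^x$, the pure-jump L\'evy part $N$ being the same for every $n$. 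Applying Theorem \ref{th2.9}(ii)--(iii) to $e^{-\lambda\cdot}f$ and $e^{-\lambda\cdot}g$ on $[0,T]$ gives $u_n^T(x)\to u^T(x)$, and combined with the tail estimate this yields $u_n(x)\to u(x)$ for every $x$.

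The continuity $u_n,u\in C(\BR^d)$ is obtained by a parallel argument: $u_n$ is continuous because $X^{x,n}$ depends continuously on $x$ in probability (Lipschitz coefficients and Lipschitz penalization, the latter because $\Pi$ is a $1$-Lipschitz projection onto the convex set $\bar D$) together with the uniform integrability from Corollary \ref{cor3.6}; continuity of $u$ on $\bar D$ follows from the analogous continuity of the Skorokhod problem, and continuity on $\BR^d$ then follows from the decomposition \eqref{eq4.16}. To upgrade pointwise convergence to uniform convergence on $\mathcal{K}$, I would establish equicontinuity of $\{u_n\}_{n\ge1}$ on $\mathcal{K}$ uniformly in $n$: the dissipativity of $x\mapsto-n(x-\Pi(x))$ (again a consequence of convexity of $D$) combined with It\^o's formula and a Gronwall argument produces $E|X^{x,n}_t-X^{y,n}_t|^p\le C_T|x-y|^p$ with $C_T$ independent of $n$, and the same tail control as above, together with this Lipschitz bound, controls the $n$-uniform modulus of continuity of the $|K^{\cdot,n}|$-integrals in $u_n$. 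Equicontinuity plus pointwise convergence to the continuous limit $u$ then delivers \eqref{eq4.26} by an Arzel\`a--Ascoli argument. \emph{The main obstacle} is this final step: the bounded-variation processes $|K^{x,n}|$ depend on $x$ in a rather delicate way, and the $n$-uniform Lipschitz estimate on $X^{x,n}$ in the initial condition rests crucially on the dissipativity afforded by convexity of $D$.
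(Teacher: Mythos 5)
Your treatment of the tail estimates, of the convergence on $[0,T]$ via Theorem \ref{th2.9}, and of the continuity of $u$ and $u_n$ follows essentially the same lines as the paper and is sound. The genuine gap is in your final step, and it is the one you flag yourself: the passage from pointwise to locally uniform convergence via equicontinuity and Arzel\`a--Ascoli. The $n$-uniform estimate $E|X^{x,n}_t-X^{y,n}_t|^p\le C_T|x-y|^p$ (which does follow from the monotonicity of $x\mapsto x-\Pi(x)$) does not control the $x$-modulus of continuity of $u_n$ uniformly in $n$, because the second term in \eqref{eq4.12} equals
\[
nE\int_0^\infty e^{-\lambda t}\,g(X^{x,n}_t)\,\mbox{dist}(X^{x,n}_t,\bar D)\,dt,
\]
and the explicit factor $n$ ruins the estimate: bounding the difference of the integrands by the Lipschitz bound on $X^{\cdot,n}$ produces $nC_T|x-y|$, which is useless as $n\to\infty$. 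Even the sharper consequence of firm nonexpansiveness of $I-\Pi$, namely $nE\int_0^t|(X^{x,n}_s-\Pi(X^{x,n}_s))-(X^{y,n}_s-\Pi(X^{y,n}_s))|^2\,ds\le C|x-y|^2$, only yields a bound of order $\sqrt{n}\,|x-y|$ after Cauchy--Schwarz. So equicontinuity of $\{u_n\}$ is not established, and I do not see an easy repair along these lines.

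The paper avoids this entirely by a continuous-convergence (diagonal) argument: since the limit $u$ is already known to be continuous, uniform convergence on a compact $K$ follows once one shows $u_n(x_n)\to u(x)$ for every $x$ and every sequence $x_n\to x$ (otherwise extract points $x_{n_k}\in K$ with $|u_{n_k}(x_{n_k})-u(x_{n_k})|\ge\varepsilon$ and a convergent subsequence to reach a contradiction). This diagonal convergence is then obtained exactly as your pointwise argument, except that one needs the joint limit $Y^{x_n,n}\to Y^x$ in probability in the $J_1$-topology, with the initial point and the penalization parameter varying simultaneously; this is supplied by \cite[Theorem 3.5, Corollary 3.6]{LS1} and is then combined with Theorem \ref{th2.9} and the uniform-in-$x$ moment bounds of Corollary \ref{cor3.6}. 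If you replace your Arzel\`a--Ascoli step by this diagonal argument, the rest of your proof goes through.
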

\begin{proof}
We first note that  by Corollary \ref{cor3.6}
there is a constant $C>0$ independent of $T\ge1$  such that
\begin{equation}
\label{eq4.06}
\sup_{y\in B(x,1),\,n\ge1}\max\big(E|K^{y}|^p_T,E\sup_{s\leq T} |X^{y,n}_s|^p,E|K^{y,n}|^p_T\big)\leq C(1+T^p).
\end{equation}
To show that $u\in C(\BR^d)$ we fix $x\in\BR^d$ and assume that $B(x,1)\ni y\to x$. Let $(X^x,K^x)$ be the unique solution of (\ref{eq3.1}), i.e.  the  unique solution of the  Skorokhod problem associated with  $Y^{x}$ defined by
\begin{equation}\label{eq4.07}
Y^{x,i}_t=x_i+\sum^d_{j=1}\int^t_0\sigma_{ij}(X^{x}_s)\,dW^j_s
+\int^t_0b_i(X^{x}_s)\,ds  +N^i_t,\quad t\ge0,
\end{equation}
Let  $(X^y,K^y)$ denote the unique solution of (\ref{eq3.1})  with $x$ replaced by $y$. By \cite[Theorem 4, Corollary 11]{Sl}, $Y^{y}\rightarrow Y^x$ in probability in the space $\BD(\BR_+;\BR^d)$ equipped with the $J_1$-topology. Moreover, by (\ref{eq4.06}), for every $T>0$ the family $\{|K^y|_T:y\in B(x,1)\}$ is uniformly integrable. Also, for every $T>0$, $P(\Delta Y^{x}_T=0)=P(\Delta N_T=0)=1$ (see, e.g., \cite[Lemma 2.3.2]{A}). Therefore, by Theorem \ref{th2.10}, for every $T>0$ we have
\begin{equation}
\label{eq4.01}
E\int_0^Te^{-\lambda t}f(X^y_t)\,dt\rightarrow E\int_0^Te^{-\lambda t}f(X^x_t)\,dt
\end{equation}
and
\begin{equation}\label{eq4.02}
EI_T(g_\lambda,X^{y},K^{y})\rightarrow EI_T(g_\lambda,X^{x},K^{x}),
\end{equation}
where $g_\lambda(t,x)=e^{-\lambda t}g(x)$, $t\geq0$, $x\in\BR^d$. Since $X^y\in\bar D$ for $y\in B(x,1)$, by taking  large $T>0$ the integrals  $E\int^{\infty}_Te^{-\lambda t}|f(X^y_t)|\,dt$ can  be made arbitrarily small uniformly in $y$. By this and  (\ref{eq4.01}),
\begin{equation}
\label{eq4.27}
E\int_0^\infty e^{-\lambda t}f(X^y_t)\,dt\rightarrow
E\int_0^\infty e^{-\lambda t}f(X^x_t)\,dt.
\end{equation}
Furthermore, by the integration by parts formula,
\[
E\int^t_0e^{-\lambda s}\,d|K^{y}|_s=e^{-\lambda t}E|K^{y}|_t
+\lambda E\int^t_0e^{-\lambda s}|K^{y}|_s\,ds.
\]
Letting $t\rightarrow\infty$ and using (\ref{eq4.06}) shows that
\begin{equation}
\label{eq4.24}
E\int^{\infty}_0e^{-\lambda t}\,d|K^{y}|_t
=\lambda E\int^{\infty}_0e^{-\lambda t}|K^{y}|_t\,dt<\infty.
\end{equation}
We have
\[
E\int^{\infty}_Te^{-\lambda t}g(X^{y}_t)\,d|K^{y}|_t
\le\|g\|_\infty\Big(-e^{-\lambda T}E|K^{y}|_T
+\lambda E\int^{\infty}_Te^{-\lambda t}|K^{y}|_t\,dt\Big).
\]
By (\ref{eq4.06}) and (\ref{eq4.24}) the right-hand side of the above inequality converges to zero as $T\rightarrow\infty$ uniformly in $y\in B(x,1)$.
Moreover,
\begin{align*}
E\sum_{t>T}e^{-\lambda t}\int_0^{|\Delta K^y_t|}|\bar g(X^y_t-s{\mathbf n}(X^y_t))|\,ds
&\leq 2\|g\|_\infty E\sum_{t>T}e^{-\lambda t}|\Delta K^y_t|\\
&\leq 2\|g\|_\infty E\int_T^\infty e^{-\lambda t}d|K^y|_t
\end{align*}
and the right-hand side of the above inequality also converges to zero as $T\rightarrow\infty$ uniformly in $y\in B(x,1)$.
Therefore from  (\ref{eq4.02}) and (\ref{eq4.27}) it follows that $u(y)\rightarrow u(x)$.
The proof of continuity of $u_n$ is  similar. We fix $x\in\BR^d$ and assume that $B(x,1)\ni y\rightarrow x$.
Let $X^{x,n}$ be the unique  solution of (\ref{eq3.2}) and $Y^{x,n}$ be defined by (\ref{eq3.13}) that is
\[
Y^{x,n,i}_t=x_i+\sum^d_{j=1}\int^t_0\sigma_{ij}(X^{x,n}_s)\,dW^j_s
+\int^t_0b_i(X^{x,n}_s)\,ds  +N^i_t=X^{x,n,i}_t-K^{x,n,i}_t.
\]
We denote by  $X^{y,n}$ the unique  solution of (\ref{eq3.2}) with $x$ replaced by $y$.
By \cite[Theorem 1(ii)]{S2}, $(X^{y,n},K^{y,n})\rightarrow (X^{x,n},K^{x,n})$ in probability in the space $\BD(\BR_+;\BR^{2d})$ equipped with  $J_1$-topology. By (\ref{eq4.06}), for every $T>0$ the families $\{\sup_{t\leq T}|X^{y,n}_t|^p:y\in B(x,1)\}$ and $\{|K^{y,n}|_T:y\in B(x,1)\}$ are uniformly integrable. By the arguments from the proof of Theorem \ref{th2.5}  and Theorem \ref{th2.10}, for all $T>0$ and $n\ge1$ we have
\[
E\int_0^Te^{-\lambda t}f(X^{y,n}_t)\,dt\rightarrow E\int_0^Te^{-\lambda t}f(X^{x,n}_t)\,dt
\]
and
\[
E\int^{T}_0e^{-\lambda t}g(X^{y,n}_t)\,d|K^{y,n}|_t
\rightarrow E\int^{T}_0e^{-\lambda t}g(X^{x,n}_t)\,d|K^{x,n}|_t.
\]
We also have
\begin{equation}
\label{eq4.11}
E\int_T^{\infty}e^{-\lambda t}|f(X^{y,n}_t)|\,dt
\le C E\int_T^{\infty}e^{-\lambda t}(1+|X^{y,n}_t|^p)\,dt.
\end{equation}
By this and (\ref{eq4.06}),
\begin{equation}
\label{eq4.32}
\lim_{T\to\infty}\sup_{y\in B(x,1)}\int_T^{\infty}e^{-\lambda t} |f(X^{y,n}_t)|\,dt=0,
\end{equation}
Furthermore, analysis similar to that in the proof of (\ref{eq4.32}) shows that
\begin{equation}
\label{eq4.34}
\lim_{T\to\infty}\sup_{y\in B(x,1)}\int_T^\infty |g(X^{y,n}_t)|\,d|K^{y,n}|_t=0,
\end{equation}
which completes the proof of the convergence $u_n(y)\rightarrow u_n(x)$.
We now turn to the proof of (\ref{eq4.26}). Since we know that the limit function is continuous, it suffices to show that for every $x$  and every sequence $\{x_n\}$, if $x_n\to x$ then
\begin{equation}
\label{eq4.03} \lim_{n\rightarrow\infty}u_n(x_n)=u(x).
\end{equation}
To check (\ref{eq4.03}), we first note that by \cite[Theorem 3.5, Corollary 3.6]{LS1}, $Y^{x_n,n}\rightarrow Y^x$ in probability  the $J_1$-topology. Moreover, by  Theorem \ref{th2.9} and (\ref{eq4.06}), for every  $T>0$,
\begin{equation}
\label{eq4.04}
\lim_{n\rightarrow\infty}E\int_0^Te^{-\lambda t}f(X^{x_n,n}_t)\,dt
=E\int_0^Te^{-\lambda t}f(X^x_t)\,dt
\end{equation}
and
\begin{equation}
\label{eq4.05}
\lim_{n\rightarrow\infty}E\int_0^Te^{-\lambda t}g(X^{x_n,n}_t)\,d|K^{x_n,n}|_t
=EI_T(g_\lambda,X^{x},K^{x}).
\end{equation}
By (\ref{eq4.06}) and  the arguments used to prove (\ref{eq4.32}) and (\ref{eq4.34}) we also have
\[
\lim_{T\to\infty}\sup_{n\ge1}E\int_T^{\infty}e^{-\lambda t}(|f(X^{x_n,n}_t)|\,ds+|g(X^{x_n,n}_t)|\,d|K^{x_n,n}|_t)=0,
\]
which when combined with (\ref{eq4.04}) and (\ref{eq4.05}) yields
(\ref{eq4.03}).
\end{proof}

\begin{remark}
By (\ref{eq2.1}),
the operator $L^n$ defined by (\ref{eq4.2}) can be written in the form
\[
L^n=L+n\mbox{dist}(x,\bar D)\frac{\partial}{\partial\bar{\mathbf n}}.
\]
Consequently, (\ref{eq4.1}) can be written in the form
\begin{equation}
\label{eq4.3}
F(x,u_n,\nabla u_n,\nabla^2u_n,\II[u_n](x))-n\mbox{dist}(x,\bar D)
\Big(\frac{\partial u_n}{\partial\bar{\mathbf n}}
+g\Big)(x)=0,\quad x\in\BR^d,
\end{equation}
where
\begin{equation}
\label{eq4.13}
F(x,r,p,M,l)=-\frac12\mbox{Tr}(a(x)M)-b(x)\cdot p+\lambda r-l-f(x).
\end{equation}
Note that (\ref{eq4.3}) reduces to the penalized equation considered in \cite{BGJ} with the exception that in that paper $\mbox{dist}(x,\bar D)$ is replaced by
$\min(\mbox{dist}(x,\bar D),1)$.
\end{remark}

By way of illustration, below we consider two very simple examples of  (\ref{eq1.1}).
For these problems one can  show  by elementary computations  why in (\ref{eq4.22}) the second term on the right-hand side appears.

\begin{example}
Suppose that $a=0$, $b=0$, $\nu=0$ and $f=0$. Then $Lu_n=0$ and (\ref{eq4.3}) (or (\ref{eq4.1})) reduces to
\begin{equation}
\label{eq4.23}
\lambda u_n=n\mbox{dist}(x,\bar D)
\Big(\frac{\partial u_n}{\partial\bar{\mathbf n}}
+g\Big)(x),\quad x\in\BR^d.
\end{equation}
(a) As in Example \ref{ex2.5}, let $D=(0,1)$ and $x=-z$ for $z>0$. Then the process $Y^x$ of (\ref{eq3.2}) is nothing but  the deterministic process $y_t=-z$, so $(X^x,K^x)$ is the deterministic process $(x,k)$ of Example \ref{eq2.5} and  $(X^{x,n},K^{n,x})$ of (\ref{eq3.2}) is the process $(x^n,k^n)$. By this and (\ref{eq4.25}),
\[
u(x)= \int_0^{|x-\Pi(x)|}g(\Pi(x)-{\mathbf n}(\Pi(x))s)\,ds=\int_0^zg(-s)\,ds
\]
and by (\ref{eq4.12}),
\begin{align*}
u_n(x)=\int^{\infty}_0e^{-\lambda s}g(x^n_s)\,d|k^n|_s&=\int_0^\infty e^{-\lambda s} g(-ze^{-ns})ne^{-ns}ds\\
&=-\int_{z}^0s^{\lambda/n}g(-s)\,dr\rightarrow\int_{0}^zg(-s)\,ds.
\end{align*}
(b) Let $d=2$, $D=B(0,1)$. Set $x=(x_1,x_2)\notin \bar D$,
$z=\mbox{\rm dist}(x,\bar D)=\|x\|-1>0$. In this case
\[
\Pi(x_1,x_2)=\Big(\frac{x_1}{\|x\|},\frac{x_2}{\|x\|}\Big),
\qquad\bar{\mathbf n}(x_1,x_2)
={\mathbf n}(\Pi({x_1},{x_2}))=\Big(-\frac{x_1}{\|x\|},-\frac{x_2}{\|x\|}\Big)
\]
and $(X^x,K^x)$ is the deterministic process $(x,k)$ defined as   $x_t=\Pi(x_1,x_2)$, $k_t=\Pi(x_1,x_2)-(x_1,x_2)$ for $t\geq0$ with the convention that $k_{0-}=0$.
Let $g\in C(\BR^2)$. By Remark \ref{rem4.2}(i),
\[
u(x_1,x_2)= \int_0^{z}g(\Pi(x_1,x_2)-s{\mathbf n} (\Pi(x_1,x_2)))\,ds=\int_0^zg(\Pi(x_1,x_2)(1+s))\,ds.
\]
Moreover, the process $(X^{x,n},K^{n,x})$ of (\ref{eq3.2}) is  the deterministic process $(x^n,k^n)$ of the form
 $x^n_t=\Pi(x_1,x_2)+((x_1,x_2)-\Pi(x_1,x_2))e^{-nt}$, $k^n_t=-n\int^t_0(x^n_s-\Pi(x_1,x_2))\,ds$, $t\ge0$.
By (\ref{eq4.12}),
\begin{align*}
u_n(x_1,x_2)&=\int^{\infty}_0e^{-\lambda s}g(x^n_s)\,d|k^n|_s\\
&=\int_0^\infty e^{-\lambda s}g(\Pi(x_1,x_2)+((x_1,x_2)-\Pi(x_1,x_2))e^{-ns})nze^{-ns}ds\\
&=-\int_{z}^0s^{\lambda/n}g(\Pi(x_1,x_2)-s{\mathbf n}(\Pi(x_1,x_2)))\,ds\rightarrow\int_{0}^zg(\Pi(x_1,x_2)(1+s))\,ds.
\end{align*}
\end{example}

\subsection{Viscosity solutions}

Let $x\in\BR^d$. For $\varphi\in C^2(\BR^d)$ and  $u\in C(\BR^d)$ satisfying (\ref{eq4.44}) we set
\[
\II_{\delta}[\varphi](x)=\int_{|y|<\delta}(\varphi(x+y)-\varphi(x)-y\cdot\nabla \varphi(x)\fch_{B(0,1)}(y))\,\nu(dy),
\]
\[
\II^{\delta}[u,\varphi](x)=\int_{|y|\ge\delta}(u(x+y)-u(x)-y\cdot\nabla \varphi(x)\fch_{B(0,1)}(y))\,\nu(dy)
\]
and
\[
F^{\delta}[u,\varphi](x)=F(x,u,\nabla\varphi(x),\nabla^2\varphi(x),
\II_{\delta}[\varphi](x)+\II^{\delta}[u,\varphi](x)),
\]
where  $F$ is defined by (\ref{eq4.13}). 
Following \cite[Definition 1]{BI} we adopt the following definition of a viscosity solution of (\ref{eq4.1}).  

\begin{definition}
(i) A function $u_n\in C(\BR^d)$ satisfying (\ref{eq4.44}) is a viscosity subsolution of (\ref{eq4.3}) if, for any test function $\varphi\in C^2(\BR^d)$, if $x\in\BR^d$ is a  maximum point of $u_n-\varphi$ in $B(x,\delta)$, then
\begin{equation}
\label{eq4.4}
F^{\delta}[u_n,\varphi](x)
-n\mbox{dist}(x,\bar D)
\Big(\frac{\partial \varphi}{\partial\bar{\mathbf n}}+g\Big)(x)\le 0
\end{equation}
for every $\delta>0$. A function $u_n\in C(\BR^d)$ is a viscosity supersolution of (\ref{eq4.3}) if, for any test function $\varphi\in C^2(\BR^d)$, if $x\in\BR^d$ is a minimum point of  $u_n-\varphi$ in $B(x,\delta)$, then
\[
F^{\delta}[u_n,\varphi](x)
-n\mbox{dist}(x,\bar D)
\Big(\frac{\partial \varphi}{\partial\bar{\mathbf n}}+g\Big)(x)\ge0.
\]
(ii) $u$ is called a viscosity solution of (\ref{eq4.1}) if it is both a viscosity sub and supersolution of (\ref{eq4.1}).
\end{definition}

The following lemma is an adaptation of part of \cite[Lemma 3.3]{BBP} to our situation (see also \cite[Proposition 1]{BI}).

\begin{lemma}
\label{lem4.8}
Let $u_n$ satisfy \mbox{\rm (\ref{eq4.44})}. Suppose that $u_n\in C(\BR^d)$   has the property that for any $\varphi\in C^2(\BR^d)$, whenever $x$ is a global maximum point of $u_n-\varphi$, then 
\begin{equation}
\label{eq4.39}
F(x,u_n,\nabla\varphi(x),\nabla^2\varphi(x),
\II[\varphi](x))-n\mbox{\rm dist}(x,\bar D)
\Big(\frac{\partial \varphi}{\partial\bar{\mathbf n}}+g\Big)(x)\le 0.
\end{equation}
Then $u_n$ satisfies \mbox{\rm(\ref{eq4.4})}. Similarly, suppose that $u_n\in C(\BR^d)$ has the property that for any  $\varphi\in C^2(\BR^d)$, whenever $x$ is a global minimum point of $u_n-\varphi$, then 
\mbox{\rm(\ref{eq4.39})} is satisfied with ``\,$\le$'' replaced by ``\,$\ge$''. Then $u_n$ satisfies \mbox{\rm(\ref{eq4.4})} with  ``\,$\le$'' replaced by ``\,$\ge$''.
\end{lemma}  

Note that (\ref{eq4.4}) can be equivalently stated as
\begin{equation}
\label{eq4.5}
-L^n\varphi(x)+\lambda u_n(x)\le f(x)+n\mbox{dist}(x,\bar D)g(x).
\end{equation}

We are going to show that the probabilistic solution of (\ref{eq4.1}) is a viscosity solution. The proof is rather standard but we could not find a proper reference
to the case we need.   We provide a proof for completeness.

\begin{proposition}
\label{prop4.3}
Let $u_n$ be a probabilistic solution of \mbox{\rm(\ref{eq4.1})}. Then $u_n$ is a  viscosity solution of \mbox{\rm(\ref{eq4.1})}.
\end{proposition}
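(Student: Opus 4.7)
My plan is to follow the classical Feynman--Kac route: derive a dynamic programming identity from the Markov property of $X^{x,n}$, apply It\^o's formula to the test function, subtract the two identities, and localize by the first exit time from a small ball.

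\emph{Step 1 (Dynamic programming).} From the Markov property (\ref{eq3.4}) and the representation (\ref{eq4.12}), I would first establish that, for every bounded $(\FF_t)$-stopping time $\tau$,
\begin{equation*}
u_n(x) = E\!\int_0^\tau e^{-\lambda s}\bigl(f(X^{x,n}_s)+ng(X^{x,n}_s)\,\mathrm{dist}(X^{x,n}_s,\bar D)\bigr)\,ds + E\bigl[e^{-\lambda\tau}u_n(X^{x,n}_\tau)\bigr].
\end{equation*}
Continuity of $u_n$, supplied by Theorem \ref{th4.4}, ensures all quantities are well defined.

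\emph{Step 2 (It\^o's formula and comparison).} Let $\varphi\in C^2(\BR^d)$ and suppose $x_0$ is a local maximum of $u_n-\varphi$; after normalization assume $u_n(x_0)=\varphi(x_0)$. For small $r,h>0$ put $\tau=\tau_r\wedge h$, where $\tau_r$ is the first exit time of $X^{x_0,n}$ from $\bar B(x_0,r)$. Applying It\^o's formula to $e^{-\lambda s}\varphi(X^{x_0,n}_s)$ and writing $N$ through its L\'evy--It\^o decomposition (\ref{eq3.01}) so that the compensator of the jump integral produces the nonlocal part of $L^n\varphi$, one obtains
\begin{equation*}
E\bigl[e^{-\lambda\tau}\varphi(X^{x_0,n}_\tau)\bigr] = \varphi(x_0) + E\!\int_0^\tau e^{-\lambda s}(L^n\varphi-\lambda\varphi)(X^{x_0,n}_{s-})\,ds,
\end{equation*}
the local-martingale parts having zero expectation because $X^{x_0,n}_{s-}\in\bar B(x_0,r)$ for $s\le\tau$. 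Subtracting this identity from Step 1 and using $u_n(x_0)=\varphi(x_0)$ gives
\begin{equation*}
-E\bigl[e^{-\lambda\tau}(u_n-\varphi)(X^{x_0,n}_\tau)\bigr] = E\!\int_0^\tau e^{-\lambda s}\bigl(f+ng\,\mathrm{dist}(\cdot,\bar D)+L^n\varphi-\lambda\varphi\bigr)(X^{x_0,n}_{s-})\,ds.
\end{equation*}

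\emph{Step 3 (Passage to the limit and the nonlocal difficulty).} To reach the subsolution inequality (\ref{eq4.5}) at $x_0$, I would divide by $h$, let $h\downarrow0$ and then $r\downarrow0$, and use continuity of the integrand at $x_0$ together with $\tau_r\wedge h/h\to1$ as $h\downarrow0$. The supersolution case is symmetric, and together the two inequalities yield the claim. The main obstacle is the boundary term $E[e^{-\lambda\tau}(u_n-\varphi)(X^{x_0,n}_\tau)]$: a large jump of $N$ can carry $X^{x_0,n}_\tau$ outside $\bar B(x_0,r)$, where the local-maximum property yields no sign information. I would resolve this either by replacing $\varphi$ with a $C^2$ extension $\tilde\varphi$ that agrees with $\varphi$ on $\bar B(x_0,r/2)$ and satisfies $\tilde\varphi\ge u_n$ on $\BR^d$, so that the boundary term becomes non-positive (the resulting perturbation of $\II\varphi(x_0)$ being absorbed via the standard equivalent formulation), or by invoking the split definition of a viscosity solution from \cite{BI} in which the nonlocal operator is tested against $\varphi$ only for small jumps and against $u_n$ for large jumps; either device lets the boundary term be estimated by a quantity that vanishes in the appropriate order and decouples it from the limiting pointwise inequality.
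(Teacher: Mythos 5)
Your proposal follows essentially the same route as the paper's proof: the Markov property yields the Feynman--Kac martingale identity (the paper's (\ref{eq4.6})), It\^o's formula applied to $e^{-\lambda t}\varphi(X^{x,n}_t)$ yields the companion identity (\ref{eq4.9}), and the two are compared after stopping at the exit time of a small ball around the local maximum point, the paper arguing by contradiction rather than dividing by $h$ and passing to the limit. The only substantive divergence is the treatment of a possible jump across $\partial B(x,r)$ at the exit time, which you correctly single out as the delicate step: where you propose a global $C^2$ majorant of $u_n$ or the split small/large-jump formulation of \cite{BI}, the paper evaluates the boundary terms at $\tau-$, so that $X^{x,n}_{\tau-}\in\bar B(x,r)$ and the local-maximum inequality applies directly.
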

\begin{proof}
From Remark \ref{rem4.4} we know that  (\ref{eq4.44}) is satisfied for $x\in\BR^d$ and $\delta>0$.  We shall use Lemma \ref{lem4.8} to show that $u_n$ is a viscosity subsolution of (\ref{eq4.1}).
Suppose that $x$ is a global maximum of $u_n-\varphi$. Without loss of generality we can assume that $u_n(x)=\varphi(x)$. Set $h_n(y)=f(y)+n\mbox{dist}(y,\bar D)g(y)=f(y)+n|\Pi(y)-y|g(y)$, $y\in\BR^d$, and
\[
M^{x,n}_t=E\Big(\int^{\infty}_0
e^{-\lambda s}h_n(X^{x,n}_s)\,ds\,\big|\,\FF_t\Big)-u_n(X^{x,n}_0), \quad t\ge0.
\]
Then $M^{x,n}$ is a uniformly integrable martingale. We have
\[
M^{x,n}_t=\int^t_0e^{-\lambda s}h_n(X^{x,n}_s)\,ds-u_n(X^{x,n}_0)
+E\Big(\int^{\infty}_te^{-\lambda s}h_n(X^{x,n}_s)\,ds|\FF_t\Big).
\]
From the Markov property (see \ref{eq3.4})) it follows that
\begin{align*}
E\Big(\int^{\infty}_te^{-\lambda s}h_n(X^{x,n}_s)\,ds\,\big|\,\FF_t\Big)
&=E\Big(\int^{\infty}_0e^{-\lambda(s+t)}h_n(X^{x,n}_{s+t})\,ds\,\big|\,\FF_t\Big)\\
&=E\int^{\infty}_0e^{-\lambda(s+t)}h_n(X^{y,n}_s)\,ds\,\big|_{y=X^{x,n}_t}
=e^{-\lambda t}u_n(X^{x,n}_t).
\end{align*}
As a result we have
\begin{equation}
\label{eq4.6}
u_n(X^{x,n}_0)=e^{-\lambda t}u_n(X^{x,n}_t)+\int^t_0e^{-\lambda s}h_n(X^{x,n}_s)\,ds-M^{x,n}_t,
\quad t\ge0.
\end{equation}
Let
\[
\bar Y^{x,n}_t=e^{-\lambda t}\varphi(X^{x,n}_t),\quad t\ge0.
\]
Applying  It\^o's formula we see that there is a local martingale
$\bar M^{x,n}$ such that for every $t>0$,
\begin{equation}
\label{eq4.9}
\bar Y^{x,n}_0=e^{-\lambda t}\bar Y^{x,n}_t +\int^t_0e^{-\lambda s}(-L^n\varphi(X^{x,n}_s)
+\lambda\varphi(X^{x,n}_s))\,ds+\int^t_0d\bar M^{x,n}_s.
\end{equation}
Striving for a contradiction, suppose that (\ref{eq4.5}) is not satisfied. Then there is
$r>0$ such that $-L^n\varphi(y)+\lambda u_n(y)>h_n(y)$ for $y\in \bar B(x,r)$. Set $\tau=\inf\{t\ge0:|X^{x,n}_t-x|>r\}$. Note that $P(\tau>0)=1$ since $X^{x,n}$ has right-continuous sample paths. By (\ref{eq4.6}) and (\ref{eq4.9}),
\begin{align}
\label{eq4.7}
u_n(x)=E\Big(e^{-\lambda\tau}u_n(X^{x,n}_{\tau-})
+\int^{\tau-}_0e^{-\lambda s} h_n(X^{x,n}_s)\,ds\Big)
\end{align}
and
\begin{equation}
\label{eq4.8}
E\bar Y^{x,n}_0=E\Big(e^{-\lambda \tau}\varphi(X^{x,n}_{\tau-})
+\int^{\tau-}_0e^{-\lambda s} 
(-L^n\varphi(X^{x,n}_s)+\lambda\varphi(X^{x,n}_s))\,ds 
\end{equation}
(We first get (\ref{eq4.8}) with $\tau$ replaced by $\sigma^n_k\wedge\tau$ for a 
localizing sequence $(\sigma^n_k)_{k\ge1}$ for $\bar M^{x,n}$, and then letting $k\rightarrow\infty$ we get (\ref{eq4.8})).
For $y\in\bar B(x,r)$ we have $(u_n-\varphi)(y)\le (u_n-\varphi)(x)=0$ and
$h_n(y)<-L^n\varphi(y)+\lambda u_n(y)\le -L^n\varphi(y)+\lambda\varphi(y)$. Furthermore $X^{x,n}_t\in\bar B(x,r)$, $t\in[0,\tau)$, $P$-a.s. Therefore
from (\ref{eq4.7}) and (\ref{eq4.8})  it follows that $u_n(x)<E\bar Y^{x,n}_0=\varphi(x)$,
which is a contradiction. In the same way we show that $u_n$ is a viscosity supersolution of (\ref{eq4.1}).
\end{proof}

Following \cite{BGJ} we adopt the following definition of a continuous viscosity solution of (\ref{eq1.1}).

\begin{definition}
(i) $u\in C(\BR^d )$  (\ref{eq4.44}) for $x\in\bar D$ is a viscosity subsolution of (\ref{eq1.1})
if, for any $\varphi\in C^2(\BR^d)$, if $x\in\BR^d$ is a  maximum point of $u-\varphi$ in $B(x,\delta)$, then
\begin{equation}
\label{eq4.17}
\begin{cases} 
F^{\delta}[u,\varphi](x)\le0 & \mbox{if }x\in D,\\
\min\big(F^{\delta}[u,\varphi](x),
-\frac{\partial\varphi}{\partial{\mathbf n}}(x)-g(x)\big)\le0 & \mbox{if } x\in\partial D,\\
-\frac{\partial\varphi}{\partial{\bar{\mathbf n}}}(x)\le g(x) & \mbox{if } x\in\BR^d\setminus \bar D
\end{cases}
\end{equation}
for every $\delta>0$. A function $u\in C(\BR^d)$ satisfying (\ref{eq4.44}) for $x\in\bar D$ is a viscosity supersolution
of (\ref{eq1.1}) if, for any  $\varphi\in C^2(\BR^d)$, if $x\in\BR^d$ is a  minimum point of $u-\varphi$ in $B(x,\delta)$, 
then
\begin{equation}
\label{eq4.18}
\begin{cases}
F^{\delta}[u,\varphi](x)\ge0 & \mbox{if }x\in D,\\
\max\big(F^{\delta}[u,\varphi](x),
-\frac{\partial\varphi}{\partial{\mathbf n}}(x)-g(x)\big)\ge0 & \mbox{if } x\in\partial D,\\
-\frac{\partial\varphi}{\partial{\bar{\mathbf n}}}(x)\ge g(x) &
\mbox{if }x\in\BR^d\setminus \bar D
\end{cases}
\end{equation}
for every $\delta>0$.\\
(ii) $u$ is called a viscosity solution of (\ref{eq1.1}) if it is both a viscosity sub and supersolution.
\end{definition}

\begin{remark}
\label{rem4.11}
As in case of equation (\ref{eq4.3}) considered in Lemma \ref{lem4.8},  to check that $u$ is a viscosity subsolution (resp. supersolution) of (\ref{eq1.1}) it suffices to check that (\ref{eq4.17}) (resp. (\ref{eq4.18})) holds with $\II^{\delta}[u,\varphi](x)$ appearing in the definition of $F^{\delta}[u,\varphi]$ replaced by $\II^{\delta}[\varphi,\varphi](x)$ whenever $x$ is a global maximum (resp. minimum) point of $u-\varphi$ 
(see \cite[Remark 3.2]{BGJ}).
\end{remark}

\begin{proposition}
\label{prop4.10}
Let $u$ be a probabilistic solution of \mbox{\rm(\ref{eq1.1})}. Then $u$ is a viscosity solution of \mbox{\rm(\ref{eq1.1})}.
\end{proposition}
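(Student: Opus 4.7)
The plan is to verify the subsolution and supersolution inequalities of (\ref{eq4.17})--(\ref{eq4.18}) separately in the three cases $x\in D$, $x\in\BR^d\setminus\bar D$ and $x\in\partial D$. The common starting point is a dynamic programming identity for $u$: from the strong Markov property of $(X^x,K^x)$ together with (\ref{eq4.22}), for any $(\FF_t)$-stopping time $\tau$ one obtains
\begin{align*}
u(x)&=E\Big[e^{-\lambda\tau}u(X^x_{\tau-})+\int_{[0,\tau)}e^{-\lambda s}\big(f(X^x_s)\,ds+g(X^x_s)\,d\|K^x\|_s\big)\\
&\qquad{}+\sum_{0\le s<\tau}e^{-\lambda s}\int_0^{|\Delta K^x_s|}\bar g(X^x_s-\rho\mathbf n(X^x_s))\,d\rho\Big],
\end{align*}
by the same telescoping argument used to produce (\ref{eq4.6}) in the proof of Proposition~\ref{prop4.3}.

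For $x\in D$, choose $r>0$ with $\bar B(x,r)\subset D$ and put $\tau_r=\inf\{t\ge0:X^x_t\notin\bar B(x,r)\}$. Since $X^x$ stays in $D$ on $[0,\tau_r)$, the process $K^x$ vanishes there and both the reflection integral and the jump-of-$K^x$ sum drop out of the DPP. Applying It\^o's formula to $e^{-\lambda t}\varphi(X^x_t)$ on $[0,\tau_r)$ and comparing with the simplified DPP proceeds exactly as in (\ref{eq4.6})--(\ref{eq4.8}); the standard contradiction then gives the required interior inequality $-L\varphi(x)+\lambda u(x)\le f(x)$ in the subsolution case (and the reverse in the supersolution case).

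The exterior case $x\in\BR^d\setminus\bar D$ follows from the explicit identity (\ref{eq4.16}). By convexity of $D$, for every $t\in(-\varepsilon,|x-\Pi(x)|)$ one has $\Pi(x+t\bar{\mathbf n}(x))=\Pi(x)$ and $|x+t\bar{\mathbf n}(x)-\Pi(x)|=|x-\Pi(x)|-t$, so (\ref{eq4.16}) yields
\[
u(x+t\bar{\mathbf n}(x))=u(\Pi(x))+\int_0^{|x-\Pi(x)|-t}g(\Pi(x)-s\mathbf n(\Pi(x)))\,ds,
\]
a $C^1$ function of $t$ whose derivative at $t=0$ equals $-g(x)$. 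If $\varphi\in C^2(\BR^d)$ and $x$ is a local extremum of $u-\varphi$, restricting to this normal line forces $\partial\varphi/\partial\bar{\mathbf n}(x)=-g(x)$, so both conditions on $\BR^d\setminus\bar D$ in (\ref{eq4.17})--(\ref{eq4.18}) hold with equality.

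The main obstacle is the boundary case $x\in\partial D$. For the subsolution part I argue by contradiction: suppose $F(x,u(x),\nabla\varphi(x),\nabla^2\varphi(x),\II[\varphi](x))>0$ \emph{and} $-\partial\varphi/\partial\mathbf n(x)-g(x)>0$. By continuity the strict inequalities $f+L\varphi-\lambda\varphi<0$ and $g+\partial\varphi/\partial\mathbf n<0$ persist on $\bar B(x,r)\cap\bar D$ and $\bar B(x,r)\cap\partial D$ respectively (extended along inward normal rays via $\bar{\mathbf n}$). Applying It\^o's formula to $e^{-\lambda t}\varphi(X^x_t)$ up to $\tau_r-$, the reflection process $K^x$ contributes the continuous piece $\int_0^{\tau_r-}e^{-\lambda s}(\partial\varphi/\partial\mathbf n)(X^x_s)\,d|K^x|^c_s$ together with jump terms $\sum e^{-\lambda s}[\varphi(\Pi(X^x_{s-}+\Delta N_s))-\varphi(X^x_{s-}+\Delta N_s)]$ which, by the fundamental theorem of calculus along the inward normal ray at $X^x_s$, equal $\sum e^{-\lambda s}\int_0^{|\Delta K^x_s|}(\partial\varphi/\partial\mathbf n)(X^x_s-\rho\mathbf n(X^x_s))\,d\rho$. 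Subtracting this It\^o relation from the DPP, using $(u-\varphi)(X^x_{\tau_r-})\le0$, and exploiting the two strict inequalities integrand-by-integrand then yields $u(x)<\varphi(x)$, contradicting $u(x)=\varphi(x)$. The supersolution part is symmetric.
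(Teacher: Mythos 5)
Your proposal is correct and follows essentially the same route as the paper: the dynamic-programming identity (\ref{eq4.20}) obtained from the Markov property, It\^o's formula for $e^{-\lambda t}\varphi(X^x_t)$ as in (\ref{eq4.21}), a localization-and-contradiction argument in the interior and boundary cases, and the explicit normal-line formula (\ref{eq4.16}) in the exterior case. The only (harmless) difference is that at boundary points you rewrite the reflection jump corrections exactly as $\int_0^{|\Delta K^x_s|}(\partial\varphi/\partial\mathbf n)(X^x_s-\rho\,\mathbf n(X^x_s))\,d\rho$ via the fundamental theorem of calculus along normal rays, whereas the paper keeps the first-order term $\nabla\varphi(X^x_{s-})\cdot\mathbf n(X^x_s)\,d|K^x|_s$ and controls the remainder through the smallness of $ES_\tau(g,X^x,K^x)$ as $\delta\downarrow0$.
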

\begin{proof}
First note that by Remark \ref{rem4.2}(iii), $u$ satisfies (\ref{eq4.44}) for every $x\in\bar D$. To shorten notation, write
\[
S_t(g,X^x,K^x)=\sum_{0\le s\le t}e^{-\lambda s}\int^{|\Delta K^x_s|}_0
\bar g(X^x_s-r{\mathbf n}(X^x_s))\,dr, \quad t\ge0.
\]
Define  $S_{\infty}(g,X^x,K^x)$ as $S_t$ but with $\sum_{0\le s\le t}$ replaced by $\sum_{0\le s<\infty}$, and then set
\begin{equation}
\label{eq4.47}
M^x_t=E\Big(\int^{\infty}_0e^{-\lambda s}f(X^x_s)\,ds
+\int^{\infty}_0e^{-\lambda s} g(X^x_s)\,d|K^x|_s+S_{\infty}(g,X^x,K^x)\,\big|\,\FF_t\Big)-u(X^x_0).
\end{equation}
Then $M^x$ is a uniformly integrable martingale. By using the Markov property (see the remarks following Proposition  \ref{prop3.3}) we check that
\begin{equation}
\label{eq4.43}
E\Big(\int^{\infty}_te^{-\lambda s}f(X^x_s)\,ds\,\big|\FF_t\Big)
=E\int^{\infty}_0e^{-\lambda(s+t)}f(X^y_s)\,ds\big|_{y=X^x_t}.
\end{equation}
We also have 
\begin{equation}
\label{eq4.48}
E\Big(\int^{\infty}_te^{-\lambda s}g(X^x_s)\,d|K^x|_s\,\big|\FF_t\Big)
=E\int^{\infty}_0e^{-\lambda(s+t)}g(X^y_s)\,d|K^y|_s\big|_{y=X^x_t}
\end{equation}
and
\begin{align}
\label{eq4.45}
&E\Big(S_{\infty}(g,X^x,K^x)-S_t(g,X^x,K^x)\,\big|\,\FF_t\Big)\nonumber\\
&\qquad=E\sum_{0< s<\infty}e^{-\lambda(s+t)}
\int^{|\Delta K^y_s|}_0 \bar g(X^y_s-r{\mathbf n}(X^y_s))\,dr\big|_{y=X^x_t}\,.
\end{align}
Indeed, in view of (\ref{eq2.24}), equality (\ref{eq4.48}) will be proved once we prove that
\[
E\Big(\int^{\infty}_te^{-\lambda s}h(X^x_s)\,dK^x_s\,\big|\FF_t\Big)
=E\int^{\infty}_0e^{-\lambda(s+t)}h(X^y_s)\,dK^y_s\big|_{y=X^x_t}
\]
for bounded $h$. But this follows from the Markov property (\ref{eq3.9}) and standard arguments. Let $K^{x,d}_t=\sum_{0<s\le t}\Delta K^x_s$ and for $n\ge1$ let $\{s^n_i\}_{i\ge1}$ be a sequence of partitions of $[0,\infty)$ such that $\max_{i\ge1}(s^n_i-s^n_{i-1})\rightarrow0$ as $n\rightarrow\infty$. Then
\begin{align}
\label{eq4.46}
&S_{\infty}(g,X^x,K^x)-S_t(g,X^x,K^x)\nonumber\\
&\qquad=\lim_{n\rightarrow\infty}\sum^{\infty}_{i=1}e^{-\lambda(t+s^n_i)} \int_0^{|K^{x,d}_{t+s^n_i}-K^{x,d}_{t+s^n_{i-1}}|}
\bar g(X^x_{t+s^n_{i}}-r{\mathbf n}(X^x_{t+s^n _{i}}))\,dr.
\end{align}
Furthermore, $K^{x,d}$ enjoys the Markov property in the sense that (\ref{eq3.9}) holds with $K^x$ replaced by $K^{x,d}$. From this and (\ref{eq4.46}) one can deduce (\ref{eq4.45}). By (\ref{eq4.47}) and (\ref{eq4.43})--(\ref{eq4.45}),
\begin{align}
\label{eq4.20}
u(X^x_0)&=e^{-\lambda t}u(X^x_t)+ \int^t_0e^{-\lambda s}
(f(X^{x}_s)\,ds+g(X^{x}_s)\,d|K^x|_s) \nonumber \\
&\quad+S_t(g,X^x,K^x)-M^{x}_t +S_0(g,X^x,K^x)-M^{x}_t,\quad t\ge0.
\end{align}
Let $\varphi\in C^2(\BR^d)$ and
\[
\bar Y^x_t=e^{-\lambda t}\varphi(X^x_t),\quad t\ge0.
\]
Integrating by parts we get
\[
e^{-\lambda t}\varphi(X^x_t)-\varphi(x)=-\lambda\int^t_0e^{-\lambda s}\varphi(X^{x}_s)\,ds
+\int^t_0e^{-\lambda s}\,d\varphi(X^x_s).
\]
Moreover, by It\^o's formula (see, e.g., \cite[Theorem II.33]{Pr}),
\begin{align*}
\varphi(X^x_t)-\varphi(X^x_0)&=\sum^d_{i=1}\int^t_0\frac{\partial\varphi}{\partial x_i} (X^x_{s-})\,dX^x_s
+\frac12\sum^d_{i,j=1}\int^t_0\frac{\partial^2\varphi}{\partial x_i\partial x_j} (X^x_{s-})\,d[X^{x,i},X^{x,j}]^c_s\\
&\quad+\sum_{0<s\le t}\Big\{\varphi(X^x_s)-\varphi(X^x_{s-})
-\sum^d_{i=1}\frac{\partial\varphi}{\partial x_i}(X^x_{s-})
\Delta X^{x,i}_s\Big\}.
\end{align*}
Hence (see \cite[Theorem 4.4.7]{A}) 
\begin{align}
\label{eq4.21}
\bar Y^x_0&=e^{-\lambda t}\varphi(X^x_t)+\int^t_0e^{-\lambda s}(-L\varphi(X^x_s)+\lambda\varphi(X^x_s))\,ds\nonumber\\
&\quad-\sum^d_{i=1}\int^t_0e^{-\lambda s}\frac{\partial\varphi}{\partial x_i} (X^x_{s-})\,dK^{x,i}_s +M^{\varphi}_t,\quad t\ge0,
\end{align}
for some uniformly integrable martingale $M^{\varphi}$. Suppose now that
$\varphi\in\BR^d$ and $x\in D$ is a global maximum point of $u-\varphi$. We can and will assume that $u(x)=\varphi(x)$. 
Suppose also that $-L\varphi(x)+\lambda u(x)>f(x)$. Since $\varphi\in C^2(\BR^d)$ and the functions $f, u$ are  continuous, there is $r>0$ such that
\begin{equation}
\label{eq4.14}
-L\varphi(y)+\lambda \varphi(y)-f(y)>0,\quad y\in\bar B(x,r).
\end{equation}
Set $\tau=\inf\{t\ge0:|X^x_t-x|>r\}\wedge(\inf\{t\ge0:X^x_t\not\in D\}/2)$. Then from (\ref{eq4.20}) it follows that
\[
u(x)=E\Big(e^{-\lambda\tau}u(X^x_{\tau})+\int^{\tau}_0e^{-\lambda s}f(X^x_s)\,ds
\Big),
\]
and from (\ref{eq4.21}) we get
\begin{align*}
E\bar Y^x_0&=E\Big(e^{-\lambda\tau}\varphi(X^x_{\tau})+\int^{\tau}_0e^{-\lambda s} (-L\varphi(X^x_s)+\lambda\varphi(X^x_s))\,ds\Big).
\end{align*}
Since $u(X^x_{\tau})\le\varphi(X^x_{\tau})$, from the above two equalities and (\ref{eq4.14})
we get $u(x)<E\bar Y^x_0=\varphi(x)$, which is a contradiction. Consider now the case
$x\in\partial D$. As before, assume that $\varphi\in C^2(\BR^d)$,
$x\in \partial D$ is a global maximum point of $u-\varphi$ and $u(x)=\varphi(x)$. Suppose that
\[
\min\big(-L\varphi+\lambda u(x)-f(x),
-\frac{\partial\varphi}{\partial{\mathbf n}}(x)-g(x):=3\varepsilon\big)>0.
\]
Choose  $r>0$ such that (\ref{eq4.14}) is satisfied.
Clearly, for any $y\in\partial D$, $z\in\bar D$ we have
\begin{align*}
\Big|\sum^d_{i=1}\frac{\partial\varphi}{\partial x_i}(z){\mathbf n}_i(y) -\frac{\partial\varphi}{\partial{\mathbf n}}(x)\Big|
&\le\Big|\sum^d_{i=1}\frac{\partial\varphi}{\partial x_i}(z)
({\mathbf n}_i(y)-{\mathbf n}_i(x))\Big|\\
&\quad+\Big|\sum^{d}_{i=1}\Big(\frac{\partial\varphi}{\partial x_i}(z)
-\frac{\partial\varphi}{\partial x_i}(x)\Big){\mathbf n}_i(x)\Big|.
\end{align*}
Since $\varphi\in C^2(\BR^d)$ and the functions $g$ and
$\partial D\ni y\mapsto{\mathbf n}_i(y)$ are continuous, one can find $\delta\in(0,r)$ such that the left-hand side of the above inequality is less then $\varepsilon$ and $|g(y)-g(x)|\le\varepsilon$ for all $z\in\bar B(x,\delta)$, $y\in\bar B(0,\delta)\cap\partial D$. Hence
\begin{equation}
\label{eq4.19}
-\sum^d_{i=1}\frac{\partial\varphi}{\partial x_i}(z){\mathbf n}_i(y) -g(y)\ge\varepsilon,
\quad z\in\bar B(x,\delta),\, y\in\bar B(x,\delta)\cap\partial D.
\end{equation}
Set $\tau=\inf\{t\ge0:|X^{x}_t-x|>\delta\mbox{ or }|\Delta K^x_t|>\delta\}\wedge T$ for some $T>0$. By  (\ref{eq4.20}),
\begin{align}
\label{eq4.29}
u(x)&=E\Big(e^{-\lambda\tau}u(X^x_{\tau})+\int^{\tau}_0e^{-\lambda s}f(X^x_s)\,ds\nonumber\\
&\quad+\int^{\tau}_0e^{-\lambda s}g(X^x_s)\,d|K^x|_s+S_{\tau}(g,X^x,K^x)
-S_0(g,X^x,K^x)\Big),
\end{align}
and by (\ref{eq4.21}),
\begin{align}
\label{eq4.30}
E\bar Y^x_0&=E\Big(e^{-\lambda\tau}\varphi(X^x_{\tau})+\int^{\tau}_0e^{-\lambda s} (-L\varphi(X^x_s)+\lambda\varphi(X^x_s))\,ds\nonumber\\
&\quad-\sum^d_{i=1}\int^{\tau}_0e^{-\lambda s}\frac{\partial\varphi}{\partial x_i}
(X^x_{s-}){\mathbf n}_i(X^x_s)\fch_{\{X^x_s\in\partial D\}}\,d|K^{x}|_s\Big).
\end{align}
Furthermore,
\[
E(S_{\tau}(g,X^x,K^x)-S_{\tau}(g,X^x,K^x))\le\|g\|_{\infty}\,E\sum_{0< s\le\tau}
e^{-\lambda s}|\Delta K_s|\fch_{\{|\Delta K_s|\le\delta\}}.
\]
Since $E|K|_T<\infty$, the right-hand side of the above inequality converges to zero as $\delta\downarrow0$.
Since $u(X^x_{\tau})\le\varphi(X^x_{\tau})$, it follows from this and (\ref{eq4.14})--(\ref{eq4.30})
that for a sufficiently small $\delta>0$ we have
$u(x)<E\bar Y^x_0=\varphi(x)$, which is a contradiction.
Finally, consider the case $x\in\BR^d\setminus\bar D$. Suppose that $x$ is a global maximum point of $u-\varphi$. We may and will assume that $u(x)-\varphi(x)=0$. We then have
$u(y)-\varphi(y)\le u(x)-\varphi(x)=0$ for all $y$ in some neighborhood $x$.
Therefore there is $t_0$ such that for all  $t\in(-t_0,t_0)$,
\[
\varphi(x+t\cdot\bar{\mathbf n}(x))-\varphi(x)
\ge u(x+t\cdot\bar{\mathbf n}(x))-u(x).
\]
Since $\Pi(x+t\cdot\bar{\mathbf n}(x))=\Pi(x)$,  it follows from the above inequality and (\ref{eq4.16}) that
\begin{align*}
\lim_{t\rightarrow0}\frac{\varphi(x+t\cdot\bar{\mathbf n}(x))-\varphi(x)}{t}&\ge
-\lim_{t\rightarrow0}\frac{1}{t}\int_{|x+t\bar{\mathbf n}(x)-\Pi(x)|}^{|x-\Pi(x)|}
g(\Pi(x)-s{\mathbf n}(\Pi(x)))\,ds\\
&=-g(\Pi(x)-|x-\Pi(x)|\cdot{\mathbf n}(\Pi(x)))=-g(x).
\end{align*}
Hence $\frac{\partial \varphi}{\partial\bar{\mathbf n}}(x)\ge-g(x)$, which in view of Remark \ref{rem4.11} completes the proof that $u$ is a viscosity subsolution of (\ref{eq1.1}). The proof that $u$ is a supersolution is similar, so we omit it.
\end{proof}


\begin{remark}
\label{rem4.12}
If $g=0$ and $f$ is continuous and bounded, then
\[
\lim_{n\rightarrow\infty}E\int_0^Te^{-\lambda t}f(X^{x,n}_t)\,dt
=E\int_0^Te^{-\lambda t}f(X^{x}_t)\,dt
\]
under assumptions (A1), (A2) ((A3) is superfluous because we need not estimate the right-hand side of (\ref{eq4.11}), and therefore we need not use Theorem \ref{th3.3}, Proposition \ref{prop3.5}. Similarly, (\ref{eq4.12}) holds true under (A1), (A2). As a result, if (A1), (A2) are satisfied, $f$ is continuous and bounded and $g=0$, then $u_n$ is a continuous viscosity solution and $u_n(x)\rightarrow u(x)$ for $x\in\BR^d$.
\end{remark}

\subsection{Stability results}

In view of Proposition \ref{prop4.10}, formula (\ref{eq4.22}) provides a stochastic representation of the viscosity solution of (\ref{eq1.1}). This representation allows studying  properties of viscosity solutions by stochastic methods. We illustrate this possibility by showing some stability results for (\ref{eq1.1}).

\subsubsection{Perturbations of  local operators}

In what follows $\|\cdot\|$ denote the usual norm in the space of linear operators from $\BR^d$ into $\BR^d$.

\begin{theorem}
\label{th4.41}
Assume that \mbox{\rm (A1)--(A4)} are satisfied. Let $\sigma^n,b^n,f^n,g^n$, $n\ge1$ be continuous functions such that
\[
\lim_{n\rightarrow\infty}
\sup_{x\in\bar D} (\|\sigma^n(x)-\sigma(x)\|+|b^n(x)-b(x)|+|f^n(x)-f(x)|)+\|g^n-g\|_\infty=0.
\]
Let $u$ be the probabilistic solution of \mbox{\rm(\ref{eq1.1})} and $u^n$ be the probabilistic solution of \mbox{\rm(\ref{eq1.1})}  with  $\sigma,b,f,g$ replaced by $\sigma^n,b^n,f^n,g^n$. Then  for every compact subset  $K\subset\BR^d$,
\[
\limsup_{n\rightarrow\infty}\sup_{x\in K}|u^n(x)- u(x)|=0.
\]
\end{theorem}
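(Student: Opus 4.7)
The plan is to apply the representation \eqref{eq4.22} to both $u$ and $u^n$ and to compare the underlying reflected processes. Let $(X^x,K^x)$ be the solution of \eqref{eq3.1} with coefficients $\sigma,b$, and let $(\tilde X^{x,n},\tilde K^{x,n})$ be the solution of the reflected SDE with coefficients $\sigma^n,b^n$, driven by the \emph{same} Wiener process $W$ and L\'evy process $N$. Write $\tilde Y^{x,n}$ and $Y^x$ for the corresponding processes before reflection, as in \eqref{eq4.07}.

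The first step is to show that for every $T>0$ and every compact $K\subset\BR^d$,
\[
\sup_{x\in K}E\sup_{t\le T}|\tilde X^{x,n}_t-X^x_t|^p\longrightarrow 0.
\]
Since $\bar D$ is a bounded convex $C^2$-domain, the Skorokhod map is Lipschitz in the sup-norm, so it suffices to estimate $\tilde Y^{x,n}-Y^x$. The key decomposition
\[
\sigma^n(\tilde X^{x,n}_s)-\sigma(X^x_s)=\bigl(\sigma^n-\sigma\bigr)(\tilde X^{x,n}_s)+\bigl(\sigma(\tilde X^{x,n}_s)-\sigma(X^x_s)\bigr)
\]
(and the analogous one for $b^n-b$) reveals the role of each assumption: the first summand is bounded by $\sup_{y\in\bar D}\|\sigma^n(y)-\sigma(y)\|\to 0$ (applicable because $\tilde X^{x,n}_s\in\bar D$), while the second is controlled by the Lipschitz constant of $\sigma$ from (A1) times $|\tilde X^{x,n}_s-X^x_s|$. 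Burkholder--Davis--Gundy and Gronwall then close the estimate. In particular $\tilde Y^{x,n}\to Y^x$ in probability in the $J_1$-topology, uniformly for $x\in K$.

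Next, I would invoke Theorem \ref{th2.10} with the data $f_n(s,\cdot)=e^{-\lambda s}f^n$ and $g_n(s,\cdot)=e^{-\lambda s}g^n$. Its hypotheses are met: condition \eqref{eq2.21} holds on $\bar D$ by assumption; $P(\Delta Y^x_T=0)=P(\Delta N_T=0)=1$ because $N$ has no fixed jumps; and the uniform integrability of $\{|\tilde K^{x,n}|_T\}$ follows by repeating the proof of Theorem \ref{th3.3} with $\sigma^n,b^n$ in place of $\sigma,b$. Indeed, $\sigma^n$ and $b^n$ are uniformly bounded in $n$ (by uniform convergence together with boundedness of $\sigma,b$), so that argument yields
\[
\sup_{n\ge 1,\,x\in K}\max\bigl(E|\tilde K^{x,n}|^p_T,\,E\sup_{s\le T}|\tilde X^{x,n}_s|^p\bigr)\le C(1+T^p),
\]
and Theorem \ref{th2.10} then delivers the convergence of the $[0,T]$-truncated versions of the two expectations appearing in \eqref{eq4.22}.

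The tail terms over $[T,\infty)$ are handled exactly as in the proof of Theorem \ref{th4.4}: the growth assumption on $f$ in (A4), the uniform $L^p$-bound just displayed, the integration-by-parts identity \eqref{eq4.24} applied to $\tilde K^{x,n}$, and uniform-in-$n$ boundedness of $g^n$ make the tails negligible, uniformly in $n$ and $x\in K$. This yields pointwise convergence $u^n(x)\to u(x)$; uniform convergence on $K$ follows since all the estimates above are uniform in $x\in K$ (the only $x$-dependent constant is $\mathrm{dist}(x,\bar D)$, which is bounded on $K$) and $u,u^n$ are continuous by the argument of Theorem \ref{th4.4}. The main obstacle is the strong convergence of the reflected processes even though the approximating coefficients $\sigma^n,b^n$ are only continuous; the decomposition above resolves this by using the Lipschitz continuity of the \emph{limit} rather than of the approximants.
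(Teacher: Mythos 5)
Your overall skeleton is the same as the paper's: represent $u$ and $u^n$ by \eqref{eq4.22}, establish convergence of the driving data, feed it into Theorem \ref{th2.10} together with the uniform moment bounds of Theorem \ref{th3.3}/Corollary \ref{cor3.6} (which indeed carry over verbatim because $\sigma^n,b^n$ are uniformly bounded), and kill the tails as in Theorem \ref{th4.4}. There are, however, two places where your argument leans on claims that are not justified and that the paper deliberately avoids. First, the assertion that the Skorokhod map on a bounded convex $C^2$-domain is Lipschitz in the sup-norm on c\`adl\`ag inputs is a substantive unproved claim: for general convex domains the elementary estimate (Tanaka) is only of square-root type, $\sup_{t\le T}|x_t-x'_t|^2\le \sup_{t\le T}|y_t-y'_t|^2+C\sup_{t\le T}|y_t-y'_t|\,(|k|_T+|k'|_T)$, and upgrading it to a genuine Lipschitz bound requires the admissibility machinery of Lions--Sznitman and its extension to processes with jumps. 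Without that, your Gronwall step does not close as written (it can be salvaged using the square-root estimate together with the $L^p$-bounds on $|K|_T$, but that is essentially the technical content of \cite[Theorem 4, Corollary 11]{Sl}, which is what the paper cites instead: it only needs $Y^{x_n,(n)}\to Y^x$ in probability in the $J_1$-topology, since Theorem \ref{th2.10} is a weak-convergence statement and no pathwise coupling estimate is required).

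Second, the passage to uniform convergence on compacts is a genuine gap. The convergence $EI_T(g^n_\lambda,\tilde X^{x,n},\tilde K^{x,n})\to EI_T(g_\lambda,X^x,K^x)$ that you extract from Theorem \ref{th2.10} is purely qualitative --- it provides no modulus in $x$ --- so the remark that ``all the estimates above are uniform in $x\in K$'' does not cover the one step that matters, and pointwise convergence of continuous functions to a continuous limit does not imply local uniform convergence. The paper's fix is the standard continuous-convergence trick: it proves $u^n(x_n)\to u(x)$ for every $x$ and every sequence $x_n\to x$, which together with the continuity of $u$ yields the uniform statement. To run this you must carry out your stability step with varying initial points $x_n\to x$ (or argue by contradiction and extract a convergent subsequence of near-maximizers), not with $x$ fixed. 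A final, cosmetic, difference: the paper first disposes of the case $\sigma^n=\sigma$, $b^n=b$ by a direct estimate from the representation \eqref{eq4.25}; your argument, once repaired, subsumes that case.
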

\begin{proof} The problem is very simple in the case where $\sigma^n=\sigma$  and $b^n=b$. Set $\varepsilon_n=\sup_{x\in\bar D}|f^n(x)-f(x)|+\|g^n-g\|_\infty$, then  by Remark \ref{rem4.2}(i),
\begin{align*}
|u^n(x)-u(x)|&\leq \varepsilon_n\Big(\int_0^\infty e^{-\lambda t}\,dt
+E\int_0^\infty e^{-\lambda t}\,d|K^{\Pi(x)}|_t\Big)\\
&\qquad+ \varepsilon_n|x-\Pi(x)|+2r\varepsilon_nE\sum_{0< t<\infty}e^{-\lambda t}|\Delta K^{\Pi(x)}_t|\\
&\leq \varepsilon_n\Big(\frac{1}{\lambda}+3\lambda\int_0^\infty e^{-\lambda t} E|K^{\Pi(x)}|_t\,dt\Big)+\varepsilon_n|x-\Pi(x)|.
\end{align*}
By Corollary \ref{cor3.6}, the integral on the right-hand side is bounded by some constant for every $x$ ($\Pi(x)$ belongs to the compact set  $\bar D$), which implies that for every  compact set $K$,
\[
\sup_{x\in K}|u^n(x)- u(x)|\leq \varepsilon_n(\mbox{const}+\sup_{x\in K}|x-\Pi(x)|)\rightarrow0.
\]
In the general case we  use the arguments from  the proof of Theorem \ref{th4.4}. We will show that
\[
\lim_{n\rightarrow\infty}u^n(x_n)= u(x)
\]
for every $x$  and every sequence $\{x_n\}$ such that  $x_n\to x$.
Let $(X^{x_n,(n)},K^{x_n,(n)})$ be the unique solution of  the  Skorokhod problem associated with  $Y^{x_n,(n)}$ defined by
\[
Y^{x_n,(n),i}_t=x_{n,i}+\sum^d_{j=1}\int^t_0\sigma^n_{ij}(X^{x_n,(n)}_s)\,dW^j_s
+\int^t_0b^n_i(X^{x_n,(n)}_s)\,ds  +N^i_t,\quad t\ge0,
\]
and let $(X^x,K^x)$ be the unique solution of (\ref{eq3.1}), i.e.  the unique solution of  the  Skorokhod problem associated with  $Y^{x}$ defined by (\ref{eq4.07}). By \cite[Theorem 4, Corollary 11]{Sl}, $Y^{x_n,(n)}\rightarrow Y^x$ in probability in  the $J_1$-topology. Moreover, by  Corollary \ref{cor3.6} and the fact that the coefficients $\sigma^n,b^n$, $n\ge1$, are uniformly bounded,
\begin{equation}
\label{eq4.33}
\sup_{n\ge1}\max\big(E|K^{x_n,(n)}|^p_T,E\sup_{s\leq T}|X^{x_n,(n)}_s|^p\big)\leq C(1+T^p).
\end{equation}
Hence, by Theorem \ref{th2.10}, for every $T>0$,
\[
\lim_{n\rightarrow\infty}E\int_0^Te^{-\lambda t}f^n(X^{x_n,(n)}_t)\,dt
=E\int_0^Te^{-\lambda t}f(X^x_t)\,dt
\]
and
\[
\lim_{n\rightarrow\infty}EI_T(g^n_\lambda,X^{x_n,(n)},K^{x_n,(n)})
=EI_T(g_\lambda,X^{x},K^{x})),
\]
where $g^n_\lambda(t,x)=e^{-\lambda t}g^n(x)$, $t\geq0$, $x\in\BR^d$. To complete the proof it is sufficient to use once again (\ref{eq4.33}) and the arguments from the proof of Theorem \ref{th4.4}.
\end{proof}

\subsubsection{Equations involving  fractional Laplace operators}

Consider the case where $\II=-(-\Delta)^{\alpha/2}$ for some $\alpha\in(1,2)$. It is known that this operator corresponds to the  measure
\[
\nu_{\alpha}(y)=c_{d,\alpha}|y|^{-d-\alpha}\quad\mbox{with}\quad
c_{d,\alpha}=\frac{\Gamma((d-\alpha)/2)}{2^{\alpha}\pi^{d/2}\Gamma(\alpha/2)}.
\]
A direct calculation shows that then (A3) is satisfied if and only if $\alpha\in(1,2)$.
Let $u_{\alpha}$ be the viscosity solution of the problem
\begin{equation}
\label{eq4.35}
(-\Delta)^{\alpha/2}u_{\alpha}+\lambda u_{\alpha}=f, \qquad
\frac{\partial u_{\alpha}}{\partial\bar{\mathbf n}}=-g\quad\mbox{in }D^c,
\end{equation}
and $u$ be the viscosity solution of the problem
\begin{equation}
\label{eq4.36}
-\Delta u+\lambda u=f, \qquad
\frac{\partial u}{\partial{\mathbf n}}=-g\quad\mbox{on }\partial D.
\end{equation}

\begin{proposition}
\label{prop4.14}
Assume that \mbox{\rm(A4)} is satisfied. If $\alpha\rightarrow2^{-}$, then
\[
\lim_{\alpha\rightarrow2^{-}}\sup_{x\in\bar D}|u_{\alpha}(x)- u(x)|=0.
\]
\end{proposition}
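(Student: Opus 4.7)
The plan is to represent both $u_\alpha$ and $u$ as Feynman--Kac functionals via Proposition \ref{prop4.10} and then pass to the limit by showing that the driving L\'evy noises converge to Brownian motion. Since by Theorem \ref{th4.4} each $u_\alpha$ and $u$ is continuous on the compact set $\bar D$, a routine compactness/contradiction argument reduces the statement to the following: for every sequence $\alpha_n\to 2^-$ and every $x_n\to x$ in $\bar D$ one has $u_{\alpha_n}(x_n)\to u(x)$.

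Here $\sigma\equiv 0$ and $b\equiv 0$, so the process in \eqref{eq4.07} reduces to $Y^{x,\alpha}_t=x+N^{\alpha}_t$, where $N^\alpha$ is the symmetric $\alpha$-stable process with characteristic exponent $\psi_\alpha(\xi)=|\xi|^\alpha$ (the constant $c_{d,\alpha}$ being chosen precisely so that this identity holds). As $\alpha\to 2^-$, $\psi_\alpha(\xi)\to|\xi|^2$ locally uniformly, and the classical functional limit theorem for L\'evy processes (Jacod--Shiryaev, Thm.\ VII.3.7) yields $N^{\alpha_n}\to\sqrt{2}\,W$ in law in $(\BD(\BR_+;\BR^d),J_1)$ for a standard $d$-dimensional Brownian motion $W$. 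Hence $Y^{x_n,\alpha_n}\to \bar Y^x:=x+\sqrt{2}\,W$ in $J_1$, and the associated Skorokhod-problem solutions $(X^{x_n,\alpha_n},K^{x_n,\alpha_n})$ converge, with limit $(\bar X^x,\bar K^x)$, the reflecting diffusion on $\bar D$ associated with $\Delta$. By Proposition \ref{prop4.10} applied to the limit problem, $u(x)$ equals the Feynman--Kac functional of $(\bar X^x,\bar K^x)$.

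To pass to the limit in the representation formulas we apply Theorem \ref{th2.10} on each interval $[0,T]$. Since $\bar Y^x$ is continuous we have $P(\Delta\bar Y^x_T=0)=1$, and it remains only to verify the uniform integrability of $\{|K^{x_n,\alpha_n}|_T\}_n$. Pick $p\in(1,2)$, say $p=3/2$; then $\int_{\{|y|>1\}}|y|^p\nu_\alpha(dy)$ and $\int_{\{|y|\le1\}}|y|^{2p}\nu_\alpha(dy)$ are finite and uniformly bounded for $\alpha$ in any left-neighborhood $[\alpha_0,2)$ of $2$ with $\alpha_0>p$. Hence Theorem \ref{th3.3} yields a bound on $E|K^{x_n,\alpha_n}|_T^p$ uniform in $n$, so $\{|K^{x_n,\alpha_n}|_T\}_n$ is uniformly integrable. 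Theorem \ref{th2.10} then gives, for every $T>0$,
\[
E\int_0^T e^{-\lambda t}f(X^{x_n,\alpha_n}_t)\,dt\to E\int_0^T e^{-\lambda t}f(\bar X^x_t)\,dt,
\]
\[
E\,I_T(g_\lambda,X^{x_n,\alpha_n},K^{x_n,\alpha_n})\to E\int_0^T e^{-\lambda t}g(\bar X^x_t)\,d|\bar K^x|_t,
\]
the jump-sum part of $I_T$ in the limit vanishing because $\bar K^x$ is continuous.

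The tails over $(T,\infty)$ are handled exactly as in the proof of Theorem \ref{th4.4}: $f$ restricted to $\bar D$ is bounded, so the $f$-tail is $O(e^{-\lambda T})$ uniformly in $n$; the $g$-tail and the tail of the jump sum are both dominated by $2\|g\|_\infty E\int_T^\infty e^{-\lambda t}\,d|K^{x_n,\alpha_n}|_t$, which, via integration by parts and the uniform linear-in-$t$ bound on $E|K^{x_n,\alpha_n}|_t$ from Corollary \ref{cor3.6}, goes to zero as $T\to\infty$ uniformly in $n$. Letting first $n\to\infty$ and then $T\to\infty$ gives $u_{\alpha_n}(x_n)\to u(x)$. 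The main obstacle of the whole argument is precisely securing the $\alpha$-uniform $L^p$-control on $|K^{x_n,\alpha_n}|_T$; this is possible only because we are in the regime $\alpha\to 2^-$, so that $p\in(1,2)$ can be chosen strictly below $\alpha$ uniformly.
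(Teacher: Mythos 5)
Your proof follows essentially the same route as the paper: reduce to sequences $\alpha_n\to2^-$, $x_n\to x$, identify $N^{\alpha_n}\to\sqrt{2}B$ in law in $J_1$ via convergence of the characteristic exponents, pass to the limit in the Feynman--Kac functionals using the convergence theorems of Section \ref{sec2} together with the $\alpha$-uniform $L^p$-bound on $|K^{x_n,\alpha_n}|_T$ from Theorem \ref{th3.3}/Corollary \ref{cor3.6}, and control the tails as in Theorem \ref{th4.4}. Your version is, if anything, slightly more explicit than the paper about why the moment bounds are uniform in $\alpha$ near $2$ and why the jump-sum part of $I_T$ vanishes in the limit; no gaps.
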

\begin{proof}
It is suffices to show that $u_{\alpha_n}(x_n)\rightarrow u(x)$
for every $x\in\bar D$  and all sequences  $\{\alpha_n\}\subset(1,2)$, $\{x_n\}\subset\bar D$ such that $\alpha_n\to 2^{-}$ and  $x_n\to x$.
Let $Y^{\alpha_n}:=x_n+N^{\alpha_n}$, where $N^{\alpha_n}=(N^{\alpha_n,1},\dots, N^{\alpha_n,d})$ denote a symmetric $d$-dimensional $\alpha$-stable process, and let $Y_t=x+\sqrt{2}B_{t}$, $t\ge0$, where $(B^1,\dots,B^d)$ is a standard $d$-dimensional Wiener process. The processes $N^{\alpha_n}$ have independent increments and for all $\theta\in\BR^1$, $q>0$ and $k=1,\dots,d$ we have
\[
\lim_{n\rightarrow\infty}\sup_{t\le q}
\Big|Ee^{i\theta N^{\alpha_n,k}_t}-Ee^{i\theta\sqrt{2} B^k_{t}}\Big|
=\lim_{n\rightarrow\infty}
\sup_{t\le q}\Big| e^{-t|\theta|^{\alpha_n}}-e^{t|\theta|^2}\Big|=0.
\]
From this and known results (see, e.g., \cite[Corollary 2]{JS}) it follows  that $Y^{\alpha_n}\rightarrow Y$ in distribution in the space $\BD(\BR_+;\BR^{d})$ equipped with  the $J_1$-topology. Applying now Theorem \ref{th2.9} we get (\ref{eq4.04}) and (\ref{eq4.05}). Since the estimate (\ref{eq4.06}) is uniform in $\alpha\in(p,2)$ for some fixed $p\in(1,2)$,  we  also have (\ref{eq4.32}) and (\ref{eq4.34}) with $y=x$. From these results we deduce that the probabilistic solution of (\ref{eq4.35}) converges to the probabilistic solution of (\ref{eq4.36}), which proves the proposition in view of Propositions \ref{prop4.3} and \ref{prop4.10}.
\end{proof}

\begin{remark}
It is known that $\lim_{\alpha\rightarrow 2^{-}} \frac{c_{d,\alpha}}{\alpha(2-\alpha)}=\frac{d}{\omega_{d-1}}$, where $\omega_{d-1}$ denotes the $(d-1)$-dimensional measure of the unit sphere $S^{d-1}$  (see \cite[Corollary 4.2]{DPV}). Therefore replacing $c_{d,\alpha}$ by $2-\alpha$ in the definition of $\nu_{\alpha}$ we get as limit of $u_{\alpha}$ the solution $u$ of (\ref{eq4.36}) with the operator $\Delta$ replaced by $\frac{\omega_{d-1}}{d}\Delta$. For results of this type in the half space (and different concepts of reflection)   we refer the reader to \cite{BCGJ}.
\end{remark}


The stability result of  Proposition \ref{prop4.14} can be extended to the case of viscosity solutions of the problems
\begin{equation}
\label{eq4.37}
-\LL u_{\alpha}-(-\Delta)^{\alpha/2}u_{\alpha}+\lambda u_{\alpha}=f, \qquad
\frac{\partial u_{\alpha}}{\partial\bar{\mathbf n}}=-g\quad\mbox{in }D^c.
\end{equation}
Below we shall see that in that case the limit equation has the form (\ref{eq1.9})
with $\tilde \LL$ defined by
\[
\tilde\LL=\frac12\sum^d_{i,j=1}\tilde a_{ij}(x)
\frac{\partial^2}{\partial x_i\partial x_j}
+\sum^d_{i=1}b_i(x)\frac{\partial}{\partial x_i},
\]
where $\tilde a_{ij}=a_{ij}$ for $i\neq j$  and $\tilde a_{ii}=a_{ii}+2$,  $i=1,\dots,d$.

\begin{theorem}
\label{th4.16}
Assume that \mbox{\rm(A4)} is satisfied.
Let $u_{\alpha}$ be the viscosity solution of \mbox{\rm(\ref{eq4.37})} and
$u$ be the viscosity solution of \mbox{\rm(\ref{eq1.9})}. Then
\[
\lim_{\alpha\rightarrow2^{-}}\sup_{x\in\bar D}|u_{\alpha}(x)- u(x)|=0.
\]
\end{theorem}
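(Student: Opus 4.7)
The plan is to reduce Theorem \ref{th4.16} to a convergence statement for the probabilistic representations and to identify the limiting reflected SDE, after which Theorem \ref{th2.10} plus uniform tail estimates finish the job. By Propositions \ref{prop4.3} and \ref{prop4.10} together with the uniqueness of viscosity solutions from \cite{BGJ}, the viscosity solutions $u_\alpha$ of (\ref{eq4.37}) and $u$ of (\ref{eq1.9}) coincide with the corresponding probabilistic solutions. Consequently it is enough to prove $u_{\alpha_n}(x_n)\to u(x)$ whenever $x_n\to x\in\bar D$ and $\alpha_n\to 2^-$.

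Next I would identify the correct limiting driver. The reflected SDE underlying $u_\alpha$ is driven by $(W,N^\alpha)$, where $N^\alpha$ is the symmetric $\alpha$-stable process associated with $\nu_\alpha$ and is independent of $W$. The same characteristic function computation as in the proof of Proposition \ref{prop4.14} gives $N^\alpha\Rightarrow\sqrt{2}\,B'$ in the $J_1$-topology as $\alpha\to 2^-$, where $B'$ is a standard $d$-dimensional Brownian motion independent of $W$. Hence $(W,N^{\alpha_n})\Rightarrow(W,\sqrt 2\,B')$ jointly in $J_1$, and the ``limiting'' driving semimartingale for the reflected SDE with initial condition $x$ is formally
\[
Y^x_t=x+\int_0^t\sigma(\tilde X^x_s)\,dW_s+\int_0^t b(\tilde X^x_s)\,ds+\sqrt 2\,B'_t,\qquad t\ge 0,
\]
whose infinitesimal generator is $\tilde{\mathcal L}$ since $\sigma\sigma^T+2I_d=\tilde a$. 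Appealing to a stability result for reflected SDEs with Lipschitz coefficients under $J_1$-convergence of the driving L\'evy noise (of the type used via \cite[Theorem 4, Corollary 11]{Sl} and \cite[Theorem 3.5, Corollary 3.6]{LS1} in the proofs of Theorem \ref{th4.4} and Theorem \ref{th4.41}), the unique strong solutions $(X^{x_n,\alpha_n},K^{x_n,\alpha_n})$ of the reflected SDE (\ref{eq3.1}) associated with $(W,N^{\alpha_n})$ and starting from $x_n$ converge weakly in $J_1$ to $(\tilde X^x,\tilde K^x)$, the unique solution of the reflected SDE associated with $\tilde{\mathcal L}$. Since this limit is continuous, $P(\Delta Y^x_T=0)=1$ for every $T>0$.

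With the joint $J_1$-convergence of the processes in hand, the finite-horizon pieces of the probabilistic representations converge by Theorem \ref{th2.10}: for every $T>0$,
\[
E\int_0^T e^{-\lambda t}f(X^{x_n,\alpha_n}_t)\,dt\;\longrightarrow\;E\int_0^T e^{-\lambda t}f(\tilde X^x_t)\,dt,
\]
\[
EI_T(g_\lambda,X^{x_n,\alpha_n},K^{x_n,\alpha_n})\;\longrightarrow\;EI_T(g_\lambda,\tilde X^x,\tilde K^x).
\]
The required uniform integrability of $\{\sup_{s\le T}|X^{x_n,\alpha_n}_s|^p\}$ and $\{|K^{x_n,\alpha_n}|_T\}$ follows from Corollary \ref{cor3.6}, provided the constants there can be chosen independently of $\alpha$ near $2$. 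Fixing an auxiliary exponent $p\in(1,2)$ and restricting to $\alpha\in(p,2)$, the integrals $\int_{\{|y|>1\}}|y|^p\nu_\alpha(dy)=c_{d,\alpha}\int_{\{|y|>1\}}|y|^{p-d-\alpha}\,dy$ and $\int_{\{|y|\le 1\}}|y|^{2\wedge 2p}\nu_\alpha(dy)$ stay bounded as $\alpha\to 2^-$, which through the proof of Theorem \ref{th3.3} yields an estimate of the form (\ref{eq4.06}) uniform in $\alpha\in(p,2)$. Combining this with the argument used at the end of the proof of Theorem \ref{th4.4}, the tail contributions from $[T,\infty)$ are uniformly small in $\alpha$, so passing first to $T\to\infty$ and then to $\alpha\to 2^-$ gives $u_{\alpha_n}(x_n)\to\tilde u(x)$, where $\tilde u$ is the probabilistic, hence viscosity, solution of (\ref{eq1.9}); that is, $\tilde u=u$.

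The hard part is the joint weak convergence of the reflected processes $(X^{x_n,\alpha_n},K^{x_n,\alpha_n})$ to $(\tilde X^x,\tilde K^x)$: one has to combine stability of the SDE coefficients $\sigma,b$ under Lipschitz data with stability of the reflection map under $J_1$-convergence of the driver, while at the same time verifying that the constants in the $L^p$-bounds of Theorem \ref{th3.3} and Proposition \ref{prop3.5} can be taken uniformly in $\alpha$ in a left neighbourhood of $2$, since it is precisely these uniform moment bounds that legitimise both the application of Theorem \ref{th2.10} on $[0,T]$ and the control of the tails at infinity. Once this uniform stability is in place, the rest of the argument follows the template of Proposition \ref{prop4.14} essentially verbatim.
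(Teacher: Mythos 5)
Your proposal is correct and follows essentially the same route as the paper: reduce to $u_{\alpha_n}(x_n)\to u(x)$ via the probabilistic representations, establish joint $J_1$-convergence of the drivers $(W,N^{\alpha_n})\Rightarrow(W,\sqrt2\,B')$ and hence weak convergence of the reflected pairs via the stability results of \cite{Sl}, identify the limit generator as $\tilde\LL$ through $\tilde\sigma=(\sigma,\sqrt2 I_d)$, and conclude with Theorem \ref{th2.10} plus moment bounds uniform in $\alpha$ near $2$ to control the tails. Your explicit verification that the $\nu_\alpha$-integrals stay bounded for $\alpha\in(p,2)$ is a point the paper only asserts, but it is the same argument.
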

\begin{proof}
It is sufficient to show that
$u_{\alpha_n}(x_n)\rightarrow u(x)$ for $\{\alpha_n\}, \{x_n\}$ as in the proof
of Proposition \ref{prop4.14}.
Let $(X^{x_n,\alpha_n},K^{x_n,\alpha_n})$ be the unique solution of  the  Skorokhod problem associated with  $Y^{x_n,\alpha_n}$ defined by
\[
Y^{x_n,\alpha_n,i}_t=x_{n,i}+\sum^d_{j=1}\int^t_0\sigma_{ij}(X^{x_n,\alpha_n}_s)\,dW^j_s
+\int^t_0b_i(X^{x_n,\alpha_n}_s)\,ds  +N^{\alpha_n,i}_t,\quad t\ge0.
\]
From the proof of Proposition \ref{prop4.14}  we know that  $N^{\alpha_n}\rightarrow\sqrt{2}B$ in distribution in the space $\BD(\BR_+;\BR^{d})$ equipped with  the $J_1$-topology, where $B$ is a standard $d$-dimensional Brownian motion. Since for every $n\ge1$ the processes $N^{\alpha_n}$  and  $W$  are independent, we have in fact the joint convergence
$(W,N^{\alpha_n})\rightarrow(\bar W,\sqrt{2}B)$ in distribution in the space $\BD(\BR_+;\BR^{2d})$ in the $J_1$-topology, where $\bar W$ is a $d$-dimensional standard Brownian motion independent  of $B$. By \cite[Theorem 4]{Sl}, $(X^{x_n,\alpha_n},K^{x_n,\alpha_n})\rightarrow(\tilde X^{x},\tilde K^{x})$ in distribution in $\BD(\BR_+;\BR^{2d})$ with the  $J_1$-topology, where $(\tilde X^{x},\tilde K^{x})$
is the solution of the  Skorokhod problem associated with  $\tilde Y^x$ defined by
\[
\tilde Y^{x,i}_t=x_{i}+\sum^d_{j=1}\int^t_0\sigma_{ij}(\tilde X^{x}_s)\,d\bar W^j_s
+\int^t_0b_i(\tilde X^{x}_s)\,ds  +\sqrt{2}B^{i}_t,\quad t\ge0.
\]
Clearly, $ Y^{x_n,\alpha_n}=X^{x_n,\alpha_n}-K^{x_n,\alpha_n}$ tends in distribution in the $J_1$-topology to  $\tilde Y^x=\tilde X^x-\tilde K^x$.
Set $\tilde W=(\bar W,B)$, $\tilde \sigma=(\sigma,\sqrt{2}I_d)$, where $I_d$ is the $d$-dimensional identity matrix,  and $\tilde a=\tilde \sigma\cdot\tilde \sigma ^T$. Then $\tilde W$ is a $2d$-dimensional standard Brownian motion,
$\tilde \sigma$  is a $d\times2d$-dimensional array of coefficients and the process $\tilde Y^x$ can be written in the  equivalent form
\[
\tilde Y^{x,i}_t=x_{i}+\sum^{2d}_{j=1}\int^t_0\tilde\sigma_{ij}(\tilde X^{x}_s)\,d\tilde W^j_s
+\int^t_0b_i(\tilde X^{x}_s)\,ds,\quad t\ge0.
\]
To  complete the proof it is sufficient to observe that the  viscosity solution of (\ref{eq1.9}) has the probabilistic representation
\[
u(x)=E\int_0^\infty e^{-\lambda t}(f(\tilde X^{x}_t)\,dt
+g(\tilde X^{x}_t)\,d|\tilde K^{x}|_t),\quad x\in\bar D,
\]
and to repeat arguments from the  proofs of Theorem \ref{th4.41} and Proposition \ref{prop4.14}.
\end{proof}

\end{document}